\newtheorem{theorem}{Theorem}[section]
\newtheorem{lemma}[theorem]{Lemma}
\newtheorem{corollary}[theorem]{Corollary}
\theoremstyle{definition}
\newtheorem{rem}[theorem]{Remark}
\newcommand{\erf}{\operatorname{erf}}
\newcommand{\gaussF}{{}_{2} \mathbf{F}_1}
\newcommand{\Airy}{\operatorname{Ai}}
\newcommand{\re}{\operatorname{Re}}
\newcommand{\im}{\operatorname{Im}}
\def\R{\mathbb{R}}
\newcommand{\bfR}{\mathbf{R}}
\numberwithin{equation}{section}
\author{Sung-Soo Byun}
\address{Department of Mathematical Sciences and Research Institute of Mathematics, Seoul National University, Seoul 151-747, Republic of Korea}
\email{sungsoobyun@snu.ac.kr}
\author{Yong-Woo Lee}
\address{Department of Mathematical Sciences, Seoul National University, Seoul 151-747, Republic of Korea}
\email{hellowoo@snu.ac.kr}
\begin{document}
\title[Finite size corrections of the elliptic G{\SMALL in}OE]{Finite size corrections for real eigenvalues \\
of the elliptic Ginibre matrices }



\thanks{Sung-Soo Byun was partially supported by the POSCO TJ Park Foundation (POSCO Science Fellowship) and by the New Faculty Startup Fund at Seoul National University. Yong-Woo Lee was partially supported by Samsung Science and Technology Foundation (SSTF-BA1401-51) and by the KIAS Student Fellow at Korea Institute for Advanced Study.
}

\begin{abstract}
We consider the elliptic Ginibre matrices in the orthogonal symmetry class that interpolates between the real Ginibre ensemble and the Gaussian orthogonal ensemble. We obtain the finite size corrections of the real eigenvalue densities in both the global and edge scaling regimes, as well as in both the strong and weak non-Hermiticity regimes. 
Our results extend and provide the rate of convergence to the previous recent findings in the aforementioned limits. In particular, in the Hermitian limit, our results recover the finite size corrections of the Gaussian orthogonal ensemble established by Forrester, Frankel and Garoni.
\end{abstract}

\maketitle

\section{Introduction and main results}

By their very nature, elliptic random matrices were introduced to interpolate between Hermitian and non-Hermitian random matrix theories, see \cite[Section 2.3]{BF22} and \cite[Sections 2.8 and 5.5]{BF23} for recent reviews.
One of the simplest ways to define the model in the orthogonal symmetry class is by starting with the real Ginibre matrix (denoted GinOE) $X$, an $N \times N$ matrix whose elements are given by independent real Gaussian random variables with mean $0$ and variance $1/N$.
As expected from its fundamental structure, the Ginibre matrix serves as a cornerstone in non-Hermitian random matrix theory. 
To introduce the elliptic random matrix model, one usually makes use of a parameter $\tau \equiv \tau_N \in [0,1]$. 
This parameter determines the Hermiticity of the model and may depend on the matrix dimension $N$. 
Then the one-parameter generalisation of the GinOE, known as the elliptic GinOE, is defined by
\begin{equation}
X^{(\tau)}:= \sqrt{ \frac{1+\tau}{2} } S_+ + \sqrt{ \frac{1-\tau}{2}  } S_-,  \qquad S_\pm = \frac{ X+X^T }{ \sqrt{2} }. 
\end{equation}
Notice here that for $\tau=0,$ the matrix $X^{(\tau)}$ recovers the GinOE matrix, whereas for $\tau=1$, it recovers the Gaussian orthogonal ensemble (GOE), cf. \cite[Chapter 1]{Fo10}.  
Let us mention that the terminology \emph{elliptic} stems from the fact that the eigenvalues of $X^{(\tau)}$ tend to uniformly occupy the ellipse 
\begin{equation} \label{ellipse}
\Big\{ (x,y) \in \R^2 : \Big( \frac{x}{1+\tau} \Big)^2 + \Big( \frac{y}{1-\tau} \Big)^2 \le 1   \Big\}, 
\end{equation}
which is known as the elliptic law, see e.g. \cite{NO15,AK22,By23a}. 
We refer to \cite{ADM22, BE22, Mo22, OYZ23,OR16,LR16,FG23,FT21,Fo23,HHJK23,By23b} and references therein for recent work on the elliptic Ginibre ensembles.

Compared to their complex or quaternion counterparts, a characteristic feature of real random matrices is their non-trivial probability of having real eigenvalues. This arises because the characteristic polynomial of the real matrix model consists of real coefficients, resulting in eigenvalues that are either purely real or form complex conjugate pairs, see e.g. \cite{NV21, AK07, AB22, FS23a, KPTTZ15, Si17, LMS22, FM12, Si17a,WCF23,GLX23,CESX22} for related models. 
From a statistical physics point of view, this behaviour resembles that of two-species particle systems, such as a two-component plasma in the plane.
In particular, the statistics of real eigenvalues enjoy surprising connections to other fields, such as annihilating Brownian motions \cite{TZ11,GPTZ18} and diffusion processes \cite{Fo15a,SM07}, and also find applications for instance in the context of the equilibrium counting \cite{FK16}, so-called mermaid states in certain topologically protected quantum dots \cite{BEDPMW13}. 

\medskip 

In this work, we shall study real eigenvalues of the elliptic GinOE focusing on finite size effects in scaling limits. 
This in turn relates to the question of the speed of convergence to universal laws exhibited by the elliptic GinOE real eigenvalues.
For this purpose, we denote by $\mathcal{N}_\tau$ the (random) number of real eigenvalues. 
The basic statistical information of real eigenvalues is encoded in the $1$-point function $\bfR_N: \R \to \R_+$, which is defined by its characterizing property: for a test function $f:\R \to \R$, 
\begin{equation} \label{EN RN int}
\mathbb{E} \bigg[\sum_{ j=1 }^{ \mathcal{N}_{\tau}  }  f( x_j ) \bigg] = \int_{ \mathbb{R} } f(x) \, \textbf{R}_N(x)\,dx .
\end{equation}
In particular, by letting $f \equiv 1$, it gives rise to the expected number of real eigenvalues 
\begin{equation}
E_{N,\tau} := \mathbb{E} \mathcal{N}_\tau = \int_\R \bfR_N(x)\,dx. 
\end{equation}
Then the normalised density $\rho_N \equiv \rho_{N,\tau}$ of real eigenvalues is defined by 
\begin{equation} \label{rho N RN}
\rho_N(x):= \frac{1}{ E_{N,\tau} } \bfR_N(x). 
\end{equation}

In the study of the large-$N$ asymptotics of the elliptic Ginibre matrices, the following two distinct regimes emerge.
\begin{itemize}
    \item \textbf{Strong non-Hermiticity}. Here, $\tau$ is fixed in the interval $[0,1)$. In particular if $\tau=0$, it coincides with the GinOE. 
    \smallskip
    \item \textbf{Weak non-Hermiticity}. Here, $\tau \uparrow 1$ with a proper speed. In this regime, one again needs to distinguish two different scales: for a fixed parameter $\alpha \in [0,\infty),$ \smallskip 
    \begin{itemize}
        \item \emph{Bulk scaling}: 
        \begin{equation} \label{tau wH bulk}
        \tau \equiv \tau_N= 1-\frac{\alpha^2}{N},
        \end{equation}
        \item \emph{Edge scaling}: 
        \begin{equation} \label{tau wH edge}
        \tau \equiv \tau_N = 1- \frac{\alpha^2}{N^{1/3}}. 
        \end{equation}
    In particular, if $\tau=1$ (i.e. $\alpha=0$), it corresponds to the GOE. 
    \end{itemize}
\end{itemize}
The weakly non-Hermitian regime was introduced by Fyodorov, Khoruzhenko and Sommers \cite{FKS97, FKS98,FKS97a}, where they studied the critical transition of the elliptic GinUE statistics in the bulk scaling \eqref{tau wH bulk}.
On the one hand, the edge scaling \eqref{tau wH edge} has been employed in \cite{Be10, AP14, BES23} to derive non-Hermitian extensions of the Airy point processes.
We also refer to \cite{ACV18, AB23, BL22a, BMS23} and references therein for further recent work on the elliptic Ginibre ensembles at weak non-Hermiticity.

\medskip 
Turning back to the real eigenvalue statistics, the recent work \cite{BKLL23} obtained the full asymptotic expansion of the expected number $E_{N,\tau}$ of real eigenvalues.
From now on, we shall focus on the case where the matrix dimension $N$ is even. The odd $N$ case should be treated separately, as the analysis is usually more involved, see e.g. \cite{Si07,FM09}.

In the strongly non-Hermitian regime where $\tau \in [0,1)$ is fixed, it was established in \cite[Proposition 2.1]{BKLL23} that 
\begin{equation} \label{EN tau sH}
    E_{N,\tau} = \Big(\frac{2}{\pi}\frac{1+\tau}{1-\tau}N\Big)^\frac{1}{2} + \frac{1}{2} + O(N^{-\frac{1}{2}}),
\end{equation}
as $N \to \infty$. 
The leading order asymptotic behaviour of \eqref{EN tau sH} for the GinOE case ($\tau=0$) was first obtained in the celebrated work \cite{EKS94} of Edelman, Kostlan and Shub. 
For general $\tau \in [0,1)$ fixed, this leading order behaviour was then later derived by Forrester and Nagao \cite{FN08}.
From the formula \eqref{EN tau sH}, one can observe that $E_{N,\tau}$ is an increasing function of $\tau$.
This behaviour is intuitively clear, as increasing $\tau$ brings the model closer to a symmetric matrix.
Let us also mention that the behaviour \eqref{EN tau sH} indeed holds as long as $1-\tau \gg 1/N$. 
We stress that while the leading order asymptotic $O(\sqrt{N})$ term in \eqref{EN tau sH} depends on $\tau$, the subleading order $1/2$ does not depend on $\tau$. 
This fact is closely related to the universal edge scaling limit, cf. Remark~\ref{Rem_edge universality}.   

The asymptotic behaviour of $E_{N,\tau}$ in the bulk scaling weak non-Hermiticity regime \eqref{tau wH bulk} exhibits different and seemingly more complicated behaviour. 
In this case, it was shown in \cite[Theorem 2.1]{BKLL23} that 
\begin{equation}  \label{EN tau wH}
   E_{N,\tau} = Nc(\alpha) + c_0(\alpha) + \frac{1}{2} + O(N^{-1}),
\end{equation}
as $N \to \infty$, where 
\begin{align}
\label{c(alpha)}
c(\alpha)&:= e^{-\alpha^2/2} [ I_0( \tfrac{\alpha^2}{2} )+ I_1( \tfrac{\alpha^2}{2} ) ],
\\ \label{c0(alpha)}
c_0(\alpha) &:=-\tfrac12 e^{-\alpha^2/2} [ I_0( \tfrac{\alpha^2}{2} )+\alpha^2 I_1( \tfrac{\alpha^2}{2} ) ]. 
\end{align}
Here, $I_\nu$ is the modified Bessel function of the first kind,
see e.g. \cite[Chapter 10]{NIST}.

\medskip 

Unlike the precise asymptotic expansion of the expected number of real eigenvalues, for general $\tau \in [0,1]$, the asymptotic behaviour of the 1-point function $\bfR_N$ is  available in the literature only for the leading order.
(See however Remarks~\ref{Rem_Global GinOE} and ~\ref{Rem_GOE density} for the GinOE case $\tau=0$ and the GOE case $\tau=1$ respectively.)
Nevertheless, the precise asymptotic behaviour, often referred to as the \emph{finite size correction} in random matrix theory, encompasses important statistical properties such as the counting statistics \cite{ACCL22, Ch22, Ch23, ACM23, ABES23,DDMS19,SLMS22,SLMS21}, which also carries physical implications.
On the one hand, for the Hermitian case, there have been extensive works on the finite size corrections of eigenvalue distributions, see e.g. \cite{PS16,Bo16,FFG06,GFF05,FT19,YZ23,FLT21,FS23} and references therein. 
Furthermore, these have applications in various contexts, including the statistics of the critical zeros of the Riemann zeta function \cite{FM15} and the distribution of the longest increasing subsequence \cite{FM23,Bo23}.

\medskip 

In our first result, we obtain the finite size correction of the global eigenvalue densities for both strong and weak non-Hermiticities. 

\begin{theorem}[\textbf{Finite size correction of the $1$-point functions}] \label{Thm_Bulk density} 
Let $N$ be an even integer. Then as $N \to \infty$, we have the following.
\begin{itemize}
    \item[\textup{(i)}] \textup{\textbf{(Strong non-Hermiticity)}} Let $\tau \in [0,1)$ be fixed. Then for any $x \in (-1-\tau, 1+\tau)$, we have 
    \begin{equation} \label{RN asymp sH}
    \bfR_N(x) = \bfR_{(0)}^{ \rm s }(x) N^{\frac12}  + O(e^{-\epsilon N}), \qquad \bfR_{(0)}^{ \rm s }(x):= \Big(\frac{1}{2\pi(1-\tau^2)}\Big)^{\frac12}
    \end{equation}
    for some $\epsilon>0.$
    \smallskip 
    \item[\textup{(ii)}] \textup{\textbf{(Weak non-Hermiticity)}}  Let $\tau = 1 - \alpha^2/N$ with fixed $\alpha \in [0,\infty)$. 
    Then for any $x \in (-2,2)$, we have 
    \begin{equation}  \label{RN asymp wH}
    \bfR_N(x) =  \bfR_{(0)}^{ \rm w } (x) \, N + \bfR_{(1)}^{ \rm w }(x) + O(N^{-1}),
    \end{equation}
    where
    \begin{align}
    \bfR_{(0)}^{ \rm w }(x) &:= \frac{1}{2\alpha\sqrt{\pi}} \erf(\tfrac{\alpha}{2}\sqrt{4-x^2}),  
    \\
    \bfR_{(1)}^{ \rm w }(x) &:= \frac{\alpha}{8\sqrt{\pi}} \erf(\tfrac{\alpha}{2}\sqrt{4-x^2})
    - \frac{3\alpha^2x^2+4-4\alpha^2}{8\pi\sqrt{4-x^2}} e^{\frac{\alpha^2}{4}(x^2-4)}. 
    \end{align}
\end{itemize}
\end{theorem}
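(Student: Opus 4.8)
The plan is to reduce everything to the asymptotics of one explicit finite-$N$ formula. For the elliptic GinOE the monic skew-orthogonal polynomials for the relevant skew form are rescaled monic Hermite polynomials, so the real eigenvalues form a Pfaffian point process with an explicit matrix kernel; restricting to the diagonal and performing the even-$N$ reduction produces a closed-form expression for $\mathbf{R}_N(x)$. Schematically it splits as
\begin{equation}
\mathbf{R}_N(x) = \mathcal{P}_N(x)\,\bigl[\,\mathcal{S}_N(x) + \mathcal{D}_N(x)\,\bigr],
\end{equation}
where $\mathcal{P}_N$ is a Gaussian-type prefactor carrying the normalising factor $\sqrt{N/(2\pi(1-\tau^2))}$, $\mathcal{S}_N$ is a truncated hypergeometric/incomplete-gamma-type sum coming from the even skew-orthogonal functions, and $\mathcal{D}_N$ is a single boundary correction term (present only because $N$ is even), built from one skew-orthogonal function and an $\erfc$-type factor --- the elliptic analogue of the Edelman--Kostlan--Shub / Forrester--Nagao formula and the finite-$N$ input behind \cite{BKLL23}. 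Both $\mathcal{S}_N$ and $\mathcal{D}_N$ also admit contour-integral representations, which I would use whenever uniform estimates via the saddle-point method are convenient.

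For part (i), fix $\tau\in[0,1)$ and take $x\in(-1-\tau,1+\tau)$. One compares the truncated series $\mathcal{S}_N(x)$ with its $N\to\infty$ completion: the ratio of consecutive terms near the truncation point is, up to a $\tau$-dependent constant, comparable to $(x/(1+\tau))^2<1$, so the omitted tail is geometrically small, hence $O(e^{-\epsilon N})$, and the completed sum combines with $\mathcal{P}_N$ to leave exactly $\mathbf{R}_{(0)}^{\rm s}(x)\,N^{1/2}$. For the correction term, a Laplace/saddle-point estimate shows $\mathcal{D}_N(x)$ is exponentially smaller than the main term whenever $x$ is bounded away from the spectral edges $\pm(1+\tau)$; indeed the relevant exponent is strictly maximised at the edge. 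Adding the two contributions gives \eqref{RN asymp sH}.

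For part (ii), substitute $\tau=1-\alpha^2/N$, so that $1-\tau^2\sim2\alpha^2/N$ and $\mathcal{P}_N$ becomes of order $N$. Now $\mathcal{S}_N(x)$ is a sum of order $N$ terms whose logarithm, after the substitution $k=N\xi$, is concentrated near one interior value of $\xi$; a Laplace / steepest-descent analysis (equivalently, Euler--Maclaurin on the associated Riemann sum) shows that $\mathcal{P}_N(x)\mathcal{S}_N(x)/N$ converges to $\tfrac{1}{2\alpha\sqrt\pi}\erf(\tfrac{\alpha}{2}\sqrt{4-x^2})$, giving the leading term $\mathbf{R}_{(0)}^{\rm w}(x)\,N$. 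In this regime $x$ lies on the degenerate minor axis of the limiting ellipse --- a soft edge --- so the correction term $\mathcal{D}_N(x)$ is no longer exponentially small but contributes at order $1$; a Laplace-type analysis of $\mathcal{D}_N$ (using the appropriate Hermite and $\erfc$ asymptotics) produces the algebraic factor and the Gaussian $e^{\frac{\alpha^2}{4}(x^2-4)}$ in $\mathbf{R}_{(1)}^{\rm w}$. Collecting all $O(1)$ contributions --- the sub-leading terms in the Laplace expansion of $\mathcal{P}_N\mathcal{S}_N$, the $O(1)$ factors produced when expanding $\mathcal{P}_N$ around $\tau=1$, and the contribution of $\mathcal{D}_N$ --- and simplifying with $\erf'(z)=\tfrac{2}{\sqrt\pi}e^{-z^2}$ yields \eqref{RN asymp wH}.

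The main obstacle is this second-order step in part (ii): one must control the remainder uniformly on compact subsets of $(-2,2)$ while coherently combining several $O(1)$ terms that originate from genuinely different places (the bulk sum, the prefactor expansion, the boundary term) and hence must be tracked with matching normalisations. Two consistency checks discipline the bookkeeping: integrating \eqref{RN asymp wH} over $(-2,2)$ must recover \eqref{EN tau wH}, i.e.\ $\int_{-2}^2\mathbf{R}_{(0)}^{\rm w}=c(\alpha)$ and $\int_{-2}^2\mathbf{R}_{(1)}^{\rm w}=c_0(\alpha)$, which reduce to standard integral identities for the Bessel functions in \eqref{c(alpha)}--\eqref{c0(alpha)}; and the limit $\alpha\downarrow0$ must return the GOE semicircle at leading order and the Forrester--Frankel--Garoni finite-size correction at the next order.
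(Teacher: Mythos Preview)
Your decomposition $\mathbf{R}_N = \mathcal{P}_N(\mathcal{S}_N + \mathcal{D}_N)$ lines up with the paper's split $\mathbf{R}_N = \mathbf{R}_N^1 + \mathbf{R}_N^2$, but your treatment of the sum $\mathcal{S}_N$ is genuinely different and, as written, has gaps. The paper does \emph{not} analyse the truncated Hermite sum directly; instead it differentiates $\mathbf{R}_N^1$ in $x$ and invokes the Lee--Riser generalised Christoffel--Darboux identity to obtain
\[
\mathbf{R}_N^1(x) = \mathbf{R}_N^1(x_0) - C_N\int_{x_0}^x e^{-\frac{N}{1+\tau}u^2}\,H_{N-2}\bigl(\sqrt{\tfrac{N}{2\tau}}u\bigr)\,H_{N-1}\bigl(\sqrt{\tfrac{N}{2\tau}}u\bigr)\,du,
\]
replacing a sum over $N$ Hermite polynomials by an integral involving only two of fixed degree $\sim N$, to which the Plancherel--Rotach formula applies pointwise. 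The anchor value $\mathbf{R}_N^1(0)$ is computed separately via hypergeometric identities (Lemmas~3.1 and~3.4). The paper explicitly remarks that analysing the summation directly ``becomes particularly involved when considering the precise asymptotic expansion,'' which is exactly the route you chose.

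The concrete difficulties with your direct approach are these. In part~(i), Mehler completion does give the leading term, but your tail argument is not right: the claim that consecutive terms near $k=N$ have ratio $\sim(x/(1+\tau))^2$ fails in the oscillatory regime $|x|<2\sqrt\tau$, where $H_k^2$ oscillates and has no stable ratio, and a crude Cram\'er-type envelope bound gives exponential decay only for $|x|$ below the threshold $\lambda_\tau$ of~(3.4), which is strictly less than $1+\tau$; on the remaining strip $\lambda_\tau\le|x|<1+\tau$ one must use the exponential-regime Plancherel--Rotach, where the true envelope ratio is $\tau\,\sigma(x/2\sqrt\tau)^2\to1$ as $x\to1+\tau$. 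So a regime split is unavoidable, essentially reproducing Lemmas~3.2--3.3 by another path. More seriously, in part~(ii) each of $\mathbf{R}_N^1$ and $\mathbf{R}_N^2$ carries an $O(1)$ term proportional to $\sin\bigl(2N\phi(\tfrac{x}{2})-\Theta(\tfrac{x}{2})\bigr)$ that does not decay in $N$; these cancel only upon addition (Lemmas~3.6--3.7). A Laplace or steepest-descent expansion of $\mathcal{S}_N$ will not see this unless you push the Euler--Maclaurin boundary terms through the oscillatory Plancherel--Rotach formula at $k=N-2$ and then match the resulting oscillation against that of $\mathcal{D}_N$ --- a step your sketch does not anticipate. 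Finally, calling $\mathcal{D}_N$ an ``$\erfc$-type factor'' is accurate only at $\tau=0$; for $\tau>0$ the relevant object is $\int_0^x e^{-Nu^2/(2(1+\tau))}H_{N-2}(\sqrt{N/2\tau}\,u)\,du$, itself an oscillatory integral in the weak regime and the source of the second oscillatory piece.
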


As an immediate consequence of Theorem~\ref{Thm_Bulk density} together with \eqref{rho N RN}, \eqref{EN tau sH} and \eqref{EN tau wH}, we have the following corollary. 

\begin{corollary}[\textbf{Finite size correction of the normalised densities}] \label{Cor_normalised global}
Let $N$ be an even integer. Then as $N \to \infty$, we have the following.
\begin{itemize}
    \item[\textup{(i)}] \textup{\textbf{(Strong non-Hermiticity)}} Let $\tau \in [0,1)$ be fixed. Then for any $x \in (-1-\tau, 1+\tau)$, we have 
    \begin{equation}  \label{rhoN asymp sH}
    \rho_N(x) = \rho_{(0)}^{ \rm s }(x) +  \rho_{(1)}^{ \rm s } (x) \, N^{-\frac{1}{2}} + O(N^{-1}),
    \end{equation}
    where 
    \begin{equation} \label{rho 0 1 sH}
    \rho_{(0)}^{ \rm s }(x):= \frac{1}{2(1+\tau)}, \qquad  \rho_{(1)}^{ \rm s }(x):=  -  \Big( \frac{\pi(1-\tau)}{32(1+\tau)^3} \Big)^\frac{1}{2}.
    \end{equation}
    
    \item[\textup{(ii)}] \textup{\textbf{(Weak non-Hermiticity)}}  Let $\tau = 1 - \alpha^2/N$ with fixed $\alpha \in [0,\infty)$. 
    Then for any $x \in (-2,2)$, we have 
    \begin{equation}  \label{rhoN asymp wH}
    \rho_N(x) = \rho_{(0)}^{ \rm w }(x) +  \rho_{(1)}^{ \rm w } (x) \, N^{-1} + O(N^{-2}),
    \end{equation}
    where
    \begin{align}
    \rho_{(0)}^{ \rm w }(x) &:= \frac{1}{c(\alpha)}\frac{1}{2\alpha\sqrt{\pi}} \erf(\tfrac{\alpha}{2}\sqrt{4-x^2}),
    \\
    \rho_{(1)}^{ \rm w }(x) &:= \frac{1}{c(\alpha)^2}\frac{c(\alpha)\alpha^2 - 4 c_0(\alpha)-2}{8\alpha\sqrt{\pi}} \erf(\tfrac{\alpha}{2}\sqrt{4-x^2})
    - \frac{1}{c(\alpha)}\frac{3\alpha^2x^2+4-4\alpha^2}{8\pi\sqrt{4-x^2}} e^{\frac{\alpha^2}{4}(x^2-4)}.
    \end{align}
    Here $c(\alpha)$ and $c_0(\alpha)$ are given by \eqref{c(alpha)} and \eqref{c0(alpha)}.
\end{itemize}
\end{corollary}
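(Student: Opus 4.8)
The plan is purely computational, since by \eqref{rho N RN} we have $\rho_N=\bfR_N/E_{N,\tau}$ and both $\bfR_N$ (from Theorem~\ref{Thm_Bulk density}) and $E_{N,\tau}$ (from \eqref{EN tau sH} and \eqref{EN tau wH}) are already available to the required order. The procedure in each case is the same: substitute the two expansions into the quotient, factor out the common leading power of $N$ from numerator and denominator, and expand the reciprocal of the resulting denominator with the elementary series $(1+t)^{-1}=1-t+O(t^2)$.

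For part (i), one writes $\bfR_N(x)=N^{1/2}\bigl(\bfR_{(0)}^{\rm s}(x)+O(e^{-\epsilon N})\bigr)$ and $E_{N,\tau}=N^{1/2}\bigl((\tfrac2\pi\tfrac{1+\tau}{1-\tau})^{1/2}+\tfrac12 N^{-1/2}+O(N^{-1})\bigr)$, so that
\[
\rho_N(x)=\frac{\bfR_{(0)}^{\rm s}(x)}{(\tfrac2\pi\tfrac{1+\tau}{1-\tau})^{1/2}}\Bigl(1-\tfrac12\bigl(\tfrac2\pi\tfrac{1+\tau}{1-\tau}\bigr)^{-1/2}N^{-1/2}+O(N^{-1})\Bigr)+O(e^{-\epsilon N}).
\]
The leading coefficient collapses via the identity $\sqrt{(1-\tau^2)\,\tfrac{1+\tau}{1-\tau}}=1+\tau$ to $\tfrac1{2(1+\tau)}=\rho_{(0)}^{\rm s}(x)$, and collecting the $N^{-1/2}$ term together with the rewriting $\tfrac1{4(1+\tau)}\sqrt{\tfrac{\pi(1-\tau)}{2(1+\tau)}}=\sqrt{\tfrac{\pi(1-\tau)}{32(1+\tau)^3}}$ produces $\rho_{(1)}^{\rm s}(x)$; the exponentially small contribution of $\bfR_N$ is absorbed into the $O(N^{-1})$ remainder.

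For part (ii), the identical manipulation with $\bfR_N(x)=N\bigl(\bfR_{(0)}^{\rm w}(x)+\bfR_{(1)}^{\rm w}(x)N^{-1}+O(N^{-2})\bigr)$ and $E_{N,\tau}=N\bigl(c(\alpha)+(c_0(\alpha)+\tfrac12)N^{-1}+O(N^{-2})\bigr)$ gives
\[
\rho_N(x)=\frac{\bfR_{(0)}^{\rm w}(x)}{c(\alpha)}+\Bigl(\frac{\bfR_{(1)}^{\rm w}(x)}{c(\alpha)}-\frac{(c_0(\alpha)+\tfrac12)\,\bfR_{(0)}^{\rm w}(x)}{c(\alpha)^2}\Bigr)N^{-1}+O(N^{-2}).
\]
The first term is $\rho_{(0)}^{\rm w}(x)$ by definition; for the $N^{-1}$ coefficient one puts the two $\erf$-terms over the common denominator $8\alpha\sqrt\pi\,c(\alpha)^2$, reads off the numerator $c(\alpha)\alpha^2-4c_0(\alpha)-2$, and is thereby left with exactly the stated form of $\rho_{(1)}^{\rm w}(x)$.

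There is no genuine obstacle here: the only points requiring attention are keeping the orders of the error terms under control when inverting the denominators (so that the advertised $O(N^{-1})$ and $O(N^{-2})$ bounds indeed emerge) and carrying out the elementary algebraic simplifications of the coefficients, all of which is routine.
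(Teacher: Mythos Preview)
Your proposal is correct and follows precisely the approach indicated in the paper, which states the corollary as an immediate consequence of Theorem~\ref{Thm_Bulk density} together with \eqref{rho N RN}, \eqref{EN tau sH} and \eqref{EN tau wH}. Your write-up simply makes explicit the routine quotient expansion that the paper leaves to the reader.
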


See Figure~\ref{Fig_Global density} for the numerics on Theorem~\ref{Thm_Bulk density}. 

\begin{figure}[h!]
	\begin{subfigure}{0.45\textwidth}
		\begin{center}
			\includegraphics[width=\textwidth]{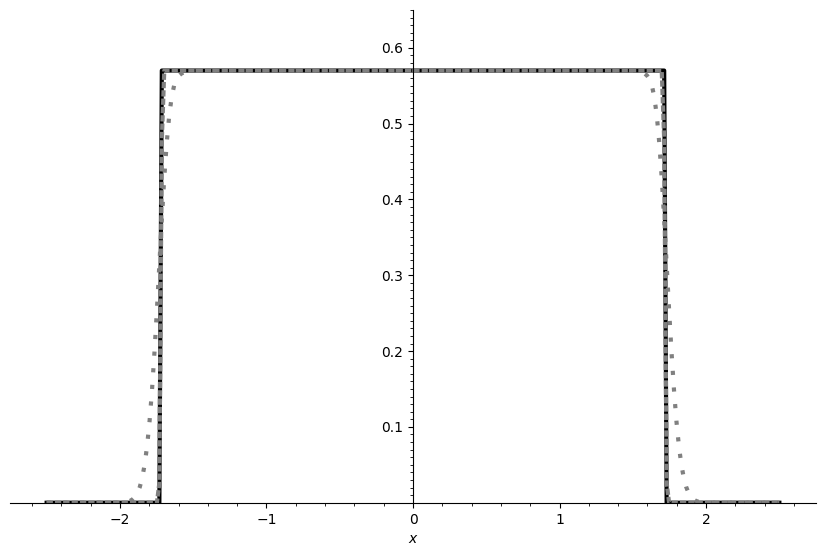}
		\end{center}
		\subcaption{ $N^{-1/2} \bfR_N$ and $ \bfR_{(0)}^{ \rm s}$ }
	\end{subfigure}
   	\begin{subfigure}{0.45\textwidth}
		\begin{center}
			\includegraphics[width=\textwidth]{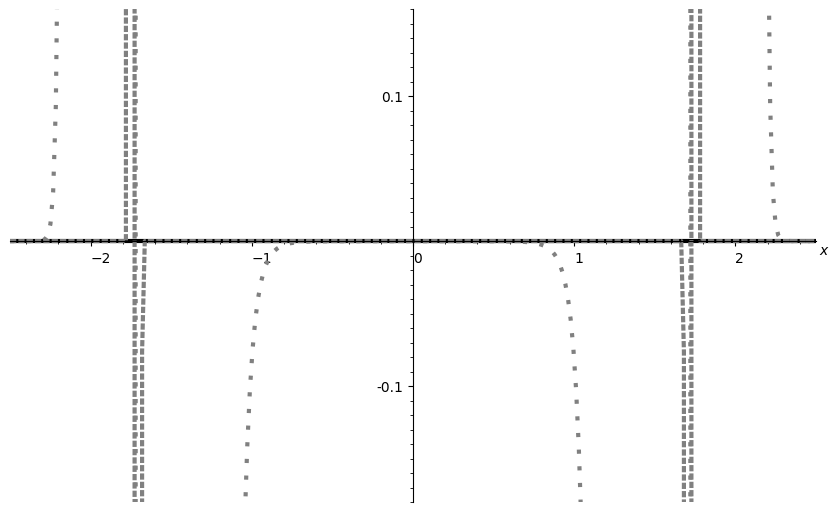}
		\end{center}
		\subcaption{  $N^{7/2} (\bfR_N-N^{1/2}\bfR_{(0)}^{ \rm s}) $  }
	\end{subfigure}

         \begin{subfigure}{0.45\textwidth}
		\begin{center}
			\includegraphics[width=\textwidth]{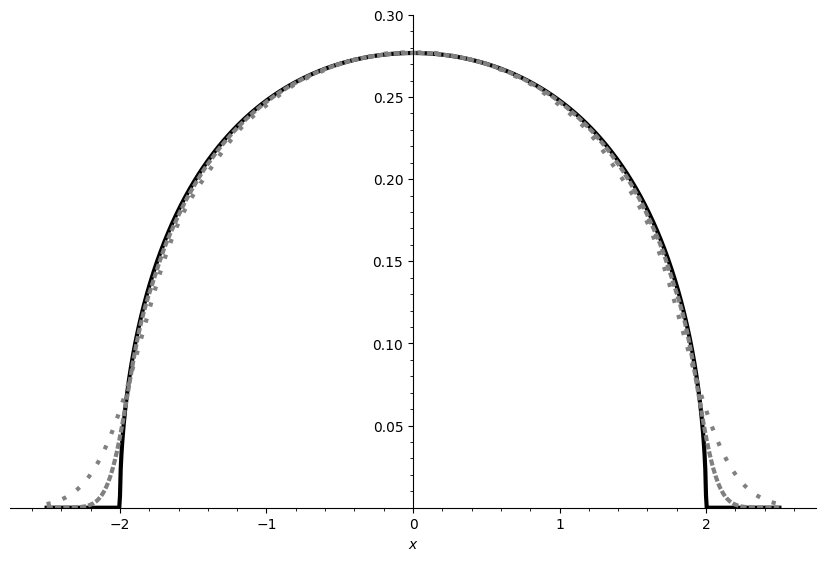}
		\end{center}
		\subcaption{ $N^{-1} \bfR_N$ and $\bfR_{(0)}^{ \rm w} $ }
	\end{subfigure}
   	\begin{subfigure}{0.45\textwidth}
		\begin{center}
			\includegraphics[width=\textwidth]{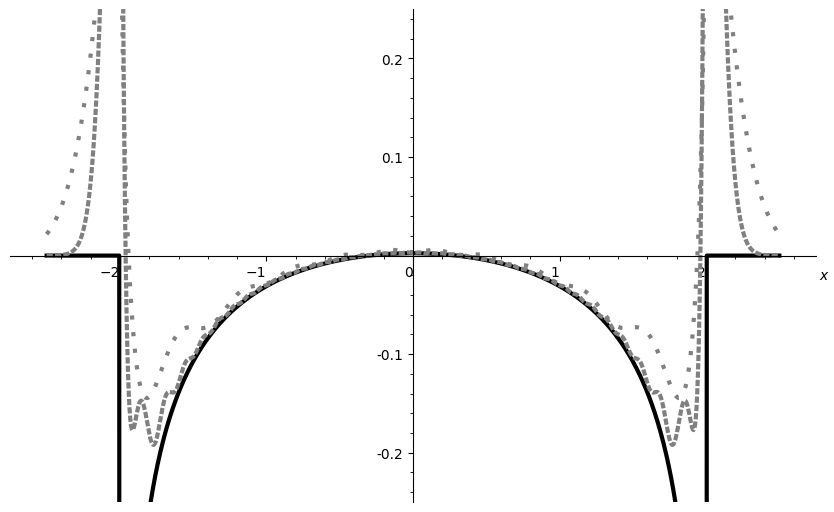}
		\end{center}
		\subcaption{  $\bfR_N-N\,\bfR_{(0)}^{ \rm w}$ and $\bfR_{(1)}^{ \rm w}$  }
	\end{subfigure}
	\caption{ The plot (A) shows the function $N^{-1/2} \bfR_N$ converging to the limit $ \bfR_{(0)}^{ \rm s } $ in \eqref{RN asymp sH} for $N = 80,\ 5120$ with $\tau = 5/7$. The plot (B) is the graph of $N^{7/2} (\bfR_N - N^{1/2} \bfR_{(0)}^{\rm s})$ with the same values of $N$ and $\tau$. 
Here, the exponent $7/2$ is arbitrarily chosen to show the exponential decay.  
 The plots (C) and (D) are analogous figures for the weak non-Hermiticity regime with $N = 10,\ 40$ and $\alpha = 2/3$.
  }  \label{Fig_Global density}
\end{figure}

\begin{rem}[\textbf{Leading order asymptotic of the global densities}]
We stress that the leading order of the density at strong non-Hermiticity (i.e. the $1/(2(1+\tau))$ part in \eqref{rhoN asymp sH}) was obtained in \cite[Section 6.2]{FN08}.
Note that the uniform density is quite natural given the elliptic law \eqref{ellipse}.
We refer to \cite{Ta22} for the ``square root'' relation between the complex and real eigenvalue densities in a more general context. 

On the other hand, the leading order of the density at weak non-Hermiticity (i.e. the $\rho_{(0)}$ part of \eqref{rhoN asymp wH}) was shown in \cite[Theorem 2.8]{BKLL23}.
This formula was indeed predicted by Efetov \cite{Efe97} some time ago now, utilizing the supersymmetry method in the context of directed quantum chaos. 
A notable feature of this one-parameter family of densities is that it interpolates the uniform density (cf. \eqref{rho 0 1 sH} with $\tau=1$) and the Wigner's semi-circle law:
\begin{equation}
\rho_{(0)}^{ \rm w }(x) \sim 
\begin{cases}
\displaystyle \frac{1}{4} & \textup{ as } \alpha \to \infty; 
\smallskip 
\\
\displaystyle  \frac{\sqrt{4-x^2}}{2\pi}  & \textup{ as } \alpha \to 0.
\end{cases}
\end{equation}

Note that, by \eqref{EN RN int}, these leading order asymptotic behaviours yield the leading order asymptotic behaviours of the expected number of real eigenvalues given in \eqref{EN tau sH} and \eqref{EN tau wH}. 
For the weakly non-Hermitian regime, one also needs the integral representation
\begin{equation} \label{c(alpha) int rep}
c(\alpha)= \frac{2}{\alpha \sqrt{\pi}} \int_{0}^{1} \erf( \alpha \sqrt{1-s^2} ) \,ds
\end{equation}
of the function $c(\alpha)$ in \eqref{c(alpha)}, see e.g. \cite{BKLL23,BMS23}. 
\end{rem}

\begin{rem}[\textbf{Global density of the GinOE}] \label{Rem_Global GinOE}
For the GinOE case when $\tau=0$, it can be shown that the $1$-point function $\bfR_N$ satisfies the asymptotic behaviour
\begin{equation} \label{RN GinOE delta}
\bfR_N(x) \Big|_{\tau = 0} = \Big( \frac{N}{ 2\pi } \Big)^{ \frac12 }\, \mathbbm{1}_{(-1,1)}(x)+  \frac14\Big( \delta(x-1)+\delta(x+1) \Big) + O( N^{-\frac12} ),
\end{equation}
in the sense of distribution, see \cite[pp.\,33--34]{BF23}. 
In connection with the formula \eqref{RN asymp sH} for $\tau=0$, one can observe that there are no additional polynomial order contributions from the bulk of the spectrum.
The asymptotic formula \eqref{RN GinOE delta} follows from the perfect screening of the charge density in the Coulomb gas picture of the GinOE as well as the edge scaling limit \eqref{R0 edge s} below.
This in turn implies that the $O(1)$-term of the formula \eqref{RN GinOE delta} remains valid for general value of $\tau \in [0,1)$ at strong non-Hermiticity.
Note also that by \eqref{EN RN int}, the expansion \eqref{RN GinOE delta} indeed gives rise to the $1/2$ in the $O(1)$-term of the expansion \eqref{EN tau sH} for $\tau=0$.
This formula can also be used to derive the large-$N$ expansion of the moment generating function, which satisfies a linear differential equation recently found in \cite[Corollary 1.5]{By23b}. 
We also refer to \cite{GJ21,Ja23} for the rate of convergence of the empirical measure of the GinOE.
\end{rem}

\bigskip

Next, we investigate the edge scaling limits of the real eigenvalue densities. 
It is worth recalling here that, due to the elliptic law \eqref{ellipse}, the edge of real eigenvalues is located at $\pm (1+\tau).$ 
Furthermore, by the symmetry $x \mapsto -x$, it suffices to consider the right-most edge $1+\tau.$

To describe the edge scaling limits at weak non-Hermiticity, it is convenient to use the notation
\begin{equation}
    \Airy_\alpha(x) \coloneqq e^{\frac{\alpha^6}{12}+\frac{\alpha^2}{2}x} \Airy(x+\tfrac{\alpha^4}{4}),
\end{equation}
where 
\begin{equation}
\Airy(x):= \frac{1}{\pi} \int_0^\infty \cos\Big( \frac{t^3}{3}+xt \Big)\,dt
\end{equation}
is the Airy function, see e.g. \cite[Chapter 9]{NIST}. 
Furthermore, we define the rescaling densities.
\begin{itemize}
    \item \textbf{Strong non-Hermiticity}. For a fixed $\tau \in [0,1)$, define 
 \begin{equation} \label{rescaled RN sH}
    R_N^{ \rm s }(\xi):=    \sqrt{ \frac{1-\tau^2}{N} }  \bfR_N\Big(1+\tau+  \sqrt{ \frac{1-\tau^2}{N} }\, \xi \Big) .
    \end{equation}
    \item \textbf{Weak non-Hermiticity}.  For $\tau_N = 1 - \frac{\alpha^2}{N^{1/3}}$ with fixed $\alpha \in [0,\infty)$, define 
    \begin{equation} \label{rescaled RN wH}
    R_N^{ \rm w }(\xi):= \frac{1}{N^{2/3}} \bfR_N\Big(1+\tau + \frac{\xi}{N^{2/3}}\Big).
    \end{equation}
\end{itemize}
We mention that the rescaling orders are chosen according to the typical eigenvalue spacings. 
Then we obtain the following theorem. 

\begin{theorem}[\textbf{Finite size correction of the edge scaling densities}]  \label{Thm_Edge density asym}
Let $N$ be an even integer. Then we have the following.
\begin{itemize}
    \item[\textup{(i)}] \textup{\textbf{(Strong non-Hermiticity)}} Let $\tau \in [0,1)$ be fixed. Then as $N \to \infty$, we have 
    \begin{equation} \label{rescaled RN asymp sH}
    R_N^{ \rm s }(\xi) = R_{(0)}^{\rm s}(\xi)+ R_{ (1) }^{\rm s} (\xi) \, N^{-\frac12} +O( N^{-1} ),
    \end{equation}
   uniformly on compact subsets of $\R$, where
      \begin{align}
    R_{(0)}^{ \rm s}(\xi) &:= \frac{1}{2\sqrt{2\pi} } \Big( 1 - \erf(\sqrt{2}\xi)  + \frac{e^{-\xi^2}}{ \sqrt{2} } ( 1 + \erf(\xi) ) \Big), \label{R0 edge s}
    \\
    R_{(1)}^{ \rm s}(\xi) &:= \frac{ \sqrt{1-\tau^2} }{12\pi(1-\tau)^2} e^{-2\xi^2} \Big( (1+\tau)\xi^2 - 3 \Big) \Big( 1+ e^{\xi^2}\sqrt{\pi}\xi \big( 1+\erf(\xi) \big) \Big).  \label{R1 edge s}
    \end{align} 
    
     \item[\textup{(ii)}] \textup{\textbf{(Weak non-Hermiticity)}}
    Let $\tau = 1 - \alpha^2/N^{\frac{1}{3}}$ with fixed $\alpha \in [0,\infty)$. Then as $N \to \infty,$ we have
    \begin{align}
    \begin{split}
    R_N^{ \rm w }(\xi) =  R_{(0)}^{ \rm w }(\xi) +R_{(1)}^{ \rm w }(\xi) \,  N^{ -\frac{1}{3} }  + O(N^{ -\frac23 + \epsilon }),
    \end{split}
    \end{align}
    uniformly on compact subsets of $\R$ for any $\epsilon>0$, where
    \begin{align}
    R_{(0)}^{ \rm w } (\xi)&:= \int_\xi^\infty \Airy_\alpha(t)^2\,dt + \frac{1}{2} \Airy_\alpha(\xi) \bigg( 1 - \int_\xi^\infty \Airy_\alpha(t)\,dt \bigg), \label{R0 edge w}
    \\
    \begin{split}
    R_{(1)}^{ \rm w } (\xi) &:= -\frac{\alpha^6+2\alpha^2\xi+2}{8} \Airy_\alpha(\xi)^2 + \int_\xi^\infty \bigg(\frac{\alpha^4t+\alpha^2}{2} \Airy_\alpha(t)^2 - \frac{\alpha^4}{8}  \Airy_\alpha(\xi) \,t \Airy_\alpha(t) \bigg)\,dt
    \\
    & \quad + \bigg(\frac{\alpha^4\xi+2\alpha^2}{8} \Airy_\alpha(\xi) + \frac{2\alpha^2\xi+\alpha^6+2}{8}\Airy_\alpha'(\xi) \bigg) \bigg( 1 - \int_\xi^\infty \Airy_\alpha(t) \,dt \bigg).    \label{R1 edge w}
    \end{split}
    \end{align}
\end{itemize}
\end{theorem}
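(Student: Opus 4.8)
\emph{The approach.} The plan is to start from the exact finite-$N$ representation of $\bfR_N$ that underlies Theorem~\ref{Thm_Bulk density} and to carry out edge-scaling asymptotics to two orders in each regime. Since $N$ is even, the real eigenvalues of the elliptic GinOE form a Pfaffian point process whose skew-orthogonal polynomials are built from the Hermite polynomials $H_k(x/\sqrt{2\tau})$; unfolding the Pfaffian structure yields a decomposition $\bfR_N(x)=\mathcal{S}_N(x)+\mathcal{D}_N(x)$, where $\mathcal{S}_N$ is a weighted Hermite sum $\sum_{k=0}^{N-2}(\cdots)\,H_k(x/\sqrt{2\tau})^2\,e^{-(\cdots)x^2}$, a truncation of a Mehler-type series (and, up to normalisation, the diagonal of the complex elliptic pre-kernel restricted to $\R$), and $\mathcal{D}_N$ is a weight term of the shape (a degree-$(N-1)$ Hermite polynomial)$\times$(Gaussian)$\times$(an incomplete integral expressible through $\erf$); see \cite{FN08, BKLL23}. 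By the symmetry $x\mapsto -x$ it suffices to analyse the right edge $x=1+\tau$, and in each regime one inserts the appropriate edge scaling and expands $\mathcal{S}_N$ and $\mathcal{D}_N$ to two orders.

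\emph{Strong non-Hermiticity.} Insert $x=1+\tau+\sqrt{(1-\tau^2)/N}\,\xi$ from \eqref{rescaled RN sH}. The terms of the truncated Mehler-type series $\mathcal{S}_N$ are unimodal in $k$ with the last significant index near $N$, and whether the sum has saturated by $k=N-2$ is governed, as $x$ crosses $1+\tau$ on the scale $N^{-1/2}$, by a Laplace/central-limit analysis; this yields the complementary-error-function part of \eqref{R0 edge s} at leading order together with its $O(N^{-1/2})$ Edgeworth correction. For $\mathcal{D}_N$, Stirling's formula for the normalising constant and the uniform asymptotics near the transition point of the $\erf$-factor give the Gaussian-weighted part of \eqref{R0 edge s} and its $N^{-1/2}$ correction, the point being that the exponential growth of the degree-$(N-1)$ Hermite polynomial cancels against that of the Gaussian weight precisely because $1+\tau$ is the spectral edge. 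Collecting contributions yields \eqref{R0 edge s} and \eqref{R1 edge s}; since each edge-scale expansion is a power series in $N^{-1/2}$ with coefficients bounded uniformly for $\xi$ on compacts, the remainder is $O(N^{-1})$, uniformly on compacts. (Note that $R_{(0)}^{\rm s}$ is $\tau$-independent, reflecting edge universality, whereas $R_{(1)}^{\rm s}$ is not.)

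\emph{Weak non-Hermiticity.} Now $\tau=1-\alpha^2/N^{1/3}$ and we insert $x=1+\tau+\xi/N^{2/3}$ from \eqref{rescaled RN wH}. The key input is the asymptotics of the Hermite functions $\tfrac{\tau^{k/2}}{\sqrt{2^k k!}}H_k(x/\sqrt{2\tau})\,e^{-(\cdots)x^2/2}$ for $k$ near $N$ and $x$ near the soft edge, obtained by applying the steepest-descent method to the contour-integral representation of $H_k$ in the double-scaling regime in which two saddle points coalesce at a rate set by $\alpha$: this produces Airy-type asymptotics featuring exactly the deformed function $\Airy_\alpha$, with the shift $\alpha^4/4$ and the prefactor $e^{\alpha^6/12+\alpha^2\xi/2}$ emerging from the coalescence. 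Converting the sum over $k$ into an integral via Euler--Maclaurin turns $\mathcal{S}_N$ into $\int_\xi^\infty\Airy_\alpha(t)^2\,dt$ at leading order, while the parallel edge analysis of $\mathcal{D}_N$ produces $\tfrac12\Airy_\alpha(\xi)\big(1-\int_\xi^\infty\Airy_\alpha(t)\,dt\big)$, giving \eqref{R0 edge w}. For \eqref{R1 edge w} one pushes both the steepest-descent expansion of the Hermite functions and the Euler--Maclaurin summation one further order (to $O(N^{-1/3})$), keeping track of the polynomial dependence on $\alpha$ and $\xi$, and then reorganises the result using integration by parts and the deformed Airy relation $\Airy_\alpha''(\xi)=\xi\,\Airy_\alpha(\xi)+\alpha^2\,\Airy_\alpha'(\xi)$. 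The $O(N^{-2/3+\epsilon})$ remainder absorbs the next term in the edge expansion, the Euler--Maclaurin remainder, and the tail contribution of indices $k$ away from $N$; as a consistency check, at $\alpha=0$ the formulas \eqref{R0 edge w}--\eqref{R1 edge w} collapse to the GOE soft-edge density and its finite-size correction of Forrester, Frankel and Garoni.

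\emph{The main obstacle.} The delicate step is the weak non-Hermiticity part at subleading order: one needs the steepest-descent (Plancherel--Rotach-type) asymptotics of the $\tau$-deformed Hermite kernel to be uniform in $\xi$ on compact sets and accurate to order $N^{-1/3}$ with the full $\alpha$-dependence explicit, and then the summation over $k$ must be carried out --- via Euler--Maclaurin with a controlled remainder --- without destroying that uniformity. A secondary, purely computational, hurdle is the reduction of the resulting expression to the compact closed form \eqref{R1 edge w}, which requires repeated integration by parts together with the identities for $\Airy_\alpha$ and careful bookkeeping of boundary terms.
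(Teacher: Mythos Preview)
Your decomposition $\bfR_N=\mathcal{S}_N+\mathcal{D}_N$ matches the paper's $\bfR_N=\bfR_N^1+\bfR_N^2$, and your identification of the weak-non-Hermiticity input as Plancherel--Rotach asymptotics in the Airy regime is correct. However, your strategy for the sum $\mathcal{S}_N=\bfR_N^1$ differs genuinely from the paper's, and this is where the real work lies.

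You propose to attack the summation $\sum_{k=0}^{N-2}$ directly: via a Laplace/CLT (Edgeworth) analysis in the strong case, and via Euler--Maclaurin after substituting $k$-dependent Plancherel--Rotach asymptotics in the weak case. The paper instead \emph{eliminates the sum altogether}. It differentiates $\bfR_N^1(x)$ and uses the generalised Christoffel--Darboux identity of Lee--Riser \cite{LR16} to write $(\bfR_N^1)'$ as a closed expression involving only $H_{N-2}$ and $H_{N-1}$, then integrates back from $x_0=+\infty$ (where $\bfR_N^1$ vanishes). Both (i) and (ii) then reduce to a single integral $\int_\xi^\infty e^{-(\cdots)u^2}H_{N-2}(\cdots)H_{N-1}(\cdots)\,du$, to which the relevant Plancherel--Rotach formula (exponential regime for (i), transition regime for (ii)) is applied at \emph{fixed} degree $N+m$, $m\in\{-1,-2\}$; the remaining integration in the scaling variable is elementary (Gaussian integrals in (i), Airy integrals in (ii), with the far tail in (ii) cut off at $r_N=1+\tau+N^{\delta-2/3}$ and bounded via the exponential-regime estimate).

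What the paper's route buys is that one never needs uniform-in-$k$ asymptotics of $H_k$ or a controlled Euler--Maclaurin remainder across a range of indices; the subleading terms come straight from the next order of the Plancherel--Rotach expansion at fixed degree (Lemmas~\ref{Lem_P-R exp tau fixed} and \ref{Lem_P-R tran tau varies}), with error bounds from \cite{SNWW23} justifying integration of the expansion. Your approach is in principle workable, but the paper explicitly flags direct summation as ``particularly involved when considering the precise asymptotic expansion''; extracting the exact coefficients in $R_{(1)}^{\rm s}$ and $R_{(1)}^{\rm w}$ by your route would require substantially more bookkeeping than your outline acknowledges.
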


See Figure~\ref{Fig_RN edge} for the numerics on Theorem~\ref{Thm_Edge density asym}.

\begin{figure}[h!]
	\begin{subfigure}{0.45\textwidth}
		\begin{center}
			\includegraphics[width=\textwidth]{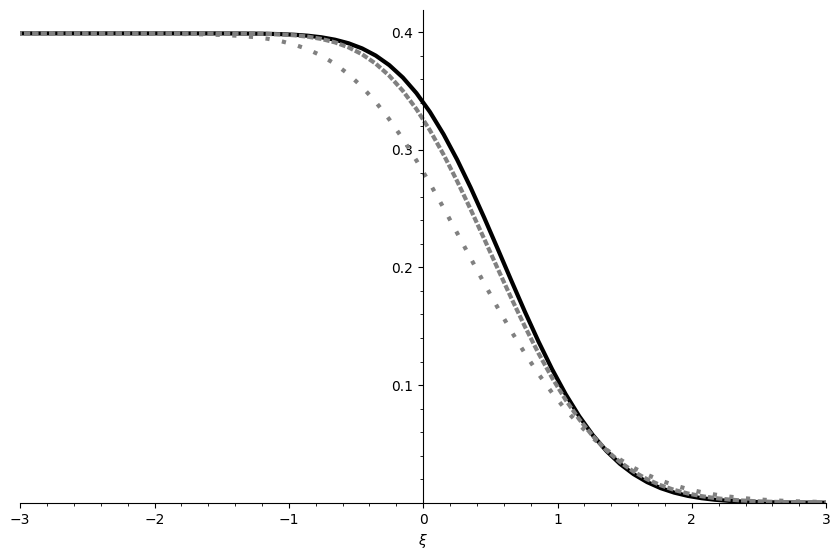}
		\end{center}
		\subcaption{ $R_N$ and $R_{(0)}^{ \rm s} $ }
	\end{subfigure}
   	\begin{subfigure}{0.45\textwidth}
		\begin{center}
			\includegraphics[width=\textwidth]{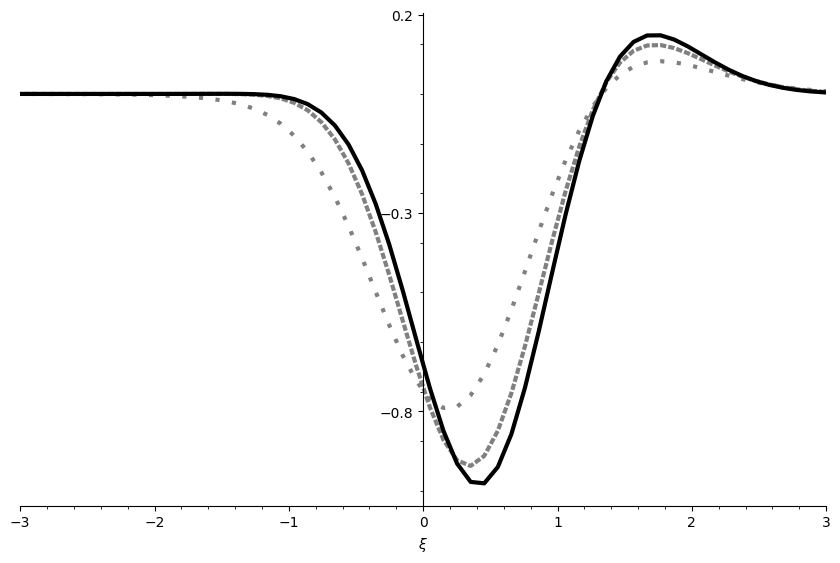}
		\end{center}
		\subcaption{  $N^{1/2}(R_N-R_{(0)}^{ \rm s}) $ and $R_{(1)}^{ \rm s}$  }
	\end{subfigure}

         \begin{subfigure}{0.45\textwidth}
		\begin{center}
			\includegraphics[width=\textwidth]{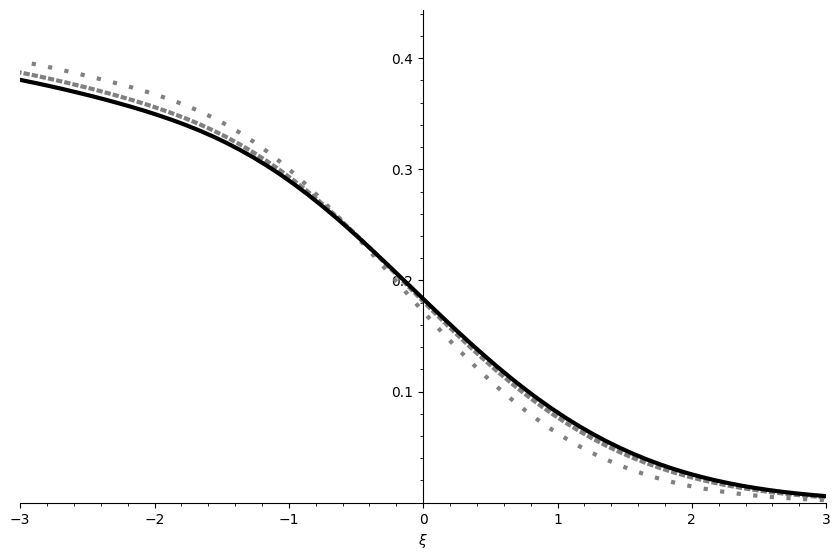}
		\end{center}
		\subcaption{ $R_N$ and $R_{(0)}^{ \rm w} $ }
	\end{subfigure}
   	\begin{subfigure}{0.45\textwidth}
		\begin{center}
			\includegraphics[width=\textwidth]{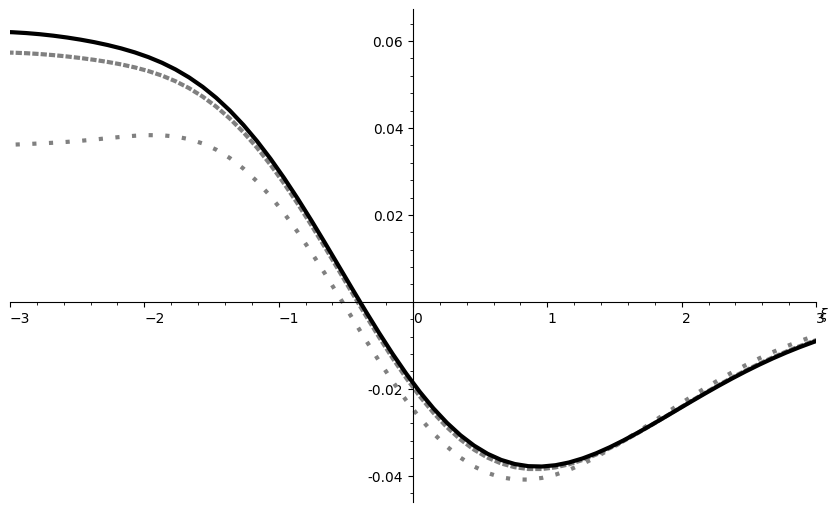}
		\end{center}
		\subcaption{  $N^{1/3}(R_N-R_{(0)}^{ \rm w}) $ and $R_{(1)}^{ \rm w}$  }
	\end{subfigure}
	\caption{ The plot (A) shows the function $R_N$ converging to the limiting density $R_{(0)}^{\rm s}$ in \eqref{rescaled RN sH} for $N = 160,\ 2560$ with $\tau = 5/7$. The plot (B) is the graph of $R_{(1)}^{\rm s}$ and its comparison to $N^{1/2}(R_N - R_{(0)}^{\rm s})$ with the same $N$'s and $\tau$. The plot (C) and (D) are analogous figures for the weak non-Hermiticity regime with $N = 10,\ 640$ and $\alpha = 2/3$. } \label{Fig_RN edge}  
\end{figure}

\begin{rem}[\textbf{Edge scaling limits and universality}] \label{Rem_edge universality}
The leading order edge scaling limit \eqref{R0 edge s} was initially derived for the GinOE case ($\tau=0$) in \cite{FN07,BS09} and subsequently extended to encompass general values of $\tau \in [0,1)$ in \cite{FN08}.
It is worth noting that the function $R_{(0)}^{\text{ s }}$ remains invariant regardless of the chosen $\tau$, illustrating the universality principle in random matrix theory. We refer the reader to \cite{Ku11} for a general review on the universality and also \cite{CES21,HW21} and references therein for universality of non-Hermitian random matrices. 
In contrast to the leading order, the sub-leading correction term $R_{(1)}^{\rm{ s }}$ does exhibit dependence on $\tau$, making it non-universal.
For the elliptic GinUE and GinSE models, the subleading corrections were obtained in \cite{LR16} and \cite{BE22} respectively. In those works, a similar dependence on $\tau$ is observed.

For the weakly non-Hermitian regime, the leading order edge scaling limit \eqref{R0 edge w} was obtained in \cite{AP14}, where the authors introduced the non-Hermitian extension of the classical Airy point process. 
In this case, the leading order already depends on $\tau$ (or $\alpha$). One may interpret that in this case, the choice of $\alpha$ determines the intrinsic geometric property—the curvature of the ellipse \eqref{ellipse} that reveals a critical transition—and thus determines the universality class.
\end{rem}

\begin{rem}[\textbf{Finite size corrections of the GOE}] \label{Rem_GOE density}
Let us discuss some immediate consequences of Theorems~\ref{Thm_Bulk density} and \ref{Thm_Edge density asym} for the GOE case.
For the case $\tau=1$ (i.e. $\alpha=0$), by Theorem~\ref{Thm_Bulk density} (ii) with $\alpha=0$, we have 
 \begin{equation} \label{GOE density correction}
        \rho_N(x) \Big|_{\alpha=0} = \frac{\sqrt{4-x^2}}{2\pi} -  \frac{1}{2\pi\sqrt{4-x^2}}\,N^{-1} + O(N^{-2}),
    \end{equation}
where we have used the asymptotic behaviour of the error function 
$$
\erf(x) \sim \frac{2}{\sqrt{\pi}} \, x, \qquad (x \to 0),
$$
see e.g. \cite[Eq.(7.6.1)]{NIST}.
This coincides with the finite size correction of the GOE density towards the Wigner's semi-circle law established in \cite[Proposition 5]{FFG06}.
We also refer to \cite{It97} for the resolvent approach to derive \eqref{GOE density correction}. 

For the edge scaling density, it follows from Theorem~\ref{Thm_Edge density asym} (ii) with $\alpha=0$ that 
\begin{align}
\begin{split}
\label{edge density GOE}
R_N(\xi) \Big|_{\alpha=0} &= R_{(0)}^{\rm w}(\xi) \Big |_{\alpha = 0} + \frac{1}{2} \Big ( \frac{d}{d \xi} R_{(0)}^{\rm w}(\xi) \Big |_{\alpha = 0} \Big ) N^{-1/3}+ O(N^{-\frac23+ \epsilon})
\\
& = \Airy'(\xi)^2 - \xi\, \Airy(\xi)^2 + \frac{1}{2} \Airy(\xi) \bigg( 1 - \int_\xi^\infty \Airy(t)\,dt \bigg)
\\
&\quad +  \frac{1}{4} \bigg[\Airy'(\xi) \bigg( 1 - \int_\xi^\infty \Airy(t) \,dt \bigg) -  \Airy(\xi)^2 \bigg] N^{-\frac{1}{3}}  + O( N^{ -\frac23 + \epsilon } ).
\end{split}
\end{align}
The fact that the correction in \eqref{edge density GOE} can be written as a derivative implies that there is a re-centering of $\xi$ on the left hand side, specifically $\xi \mapsto \xi - 1/(2N^{1/3})$, which
eliminates the leading correction on the right hand side. 
This is in keeping with the optimal rate of convergence being $O(N^{-2/3})$
for edge scaling of the GOE \cite{JM12}.  

We mention that in contrast to the above, the correction term in \eqref{edge density GOE} does not entirely coincide with \cite[Proposition 9]{FFG06}. 
Indeed, as confirmed by the authors, there is a minor typo in \cite[Proposition 9]{FFG06} that arises from a typo in the Plancherel-Rotach asymptotic formula in \cite[Eq.(3.14)]{FFG06}, where $(2N)^{1/2}$ should be corrected to $(2N+1)^{1/2}$. 
With this correction, the resulting formula coincides with our formula \eqref{edge density GOE}.
\end{rem}

\begin{rem}[\textbf{Further expansions}]
We note that the methods employed for our main results can also be extended to derive more precise expansions in Theorems~\ref{Thm_Bulk density} and \ref{Thm_Edge density asym}. 
To achieve this, more detailed asymptotic behaviours of the Hermite polynomials (Plancherel-Rotach formulas) are required. 
We mention that in some of our analysis, we have already crucially utilized a recent work \cite{SNWW23} on such behaviours to derive the first subleading corrections.
On one hand, in Theorem~\ref{Thm_Bulk density}, it is also possible to deduce the asymptotic behaviour of the $1$-point function $\bfR_N$ outside its support (i.e. $x \not \in (-1-\tau,1+\tau)$), unveiling the exponential decays.
\end{rem}

\subsection*{Organisation of the paper}
The rest of this paper is organised as follows. 
In Section~\ref{sec_Pre}, we provide the preliminaries necessary to prove our main results. Specifically, we revisit the skew-orthogonal polynomial representation of real eigenvalue densities and present the Plancherel-Rotach asymptotic formulas for the Hermite polynomials.
In Section~\ref{sec_global}, we study the real eigenvalue densities in the global regime and prove Theorem~\ref{Thm_Bulk density}.
In Section~\ref{sec_edge}, we investigate the edge scaling limits and establish Theorem~\ref{Thm_Edge density asym}.

\subsection*{Acknowledgements} The authors gratefully acknowledge Peter J. Forrester for careful reading of the previous version of the paper, as well as for valuable suggestions. 
In particular, we thank him for providing us with the derivative expression of the correction term in \eqref{edge density GOE} and for confirming a typo in \cite{FFG06}.
We also express our gratitude to Gernot Akemann for his interest and helpful discussions.

\section{Preliminaries} \label{sec_Pre}
In this section, we review some standard facts on the real eigenvalue density of the elliptic GinOE. 
We then recall the Plancherel-Rotach asymptotic formulas of the Hermite polynomials that will be crucially used in the later analysis.
Unless otherwise stated, we assume that $N$ is an even integer and $\tau \in (0, 1]$. 

It is well known that the elliptic GinOE eigenvalues form a Pfaffian point process, and its $2 \times 2$ matrix-valued kernel can be expressed in terms of the associated skew-orthogonal polynomials, see e.g. \cite[Section 2.8]{BF23}.
Furthermore, in \cite{FN08}, the associated skew-orthogonal polynomial is constructed in terms of the classical Hermite polynomial 
$$
H_k(x) := (-1)^k e^{x^2} \frac{d}{dx} e^{-x^2}.
$$
As a result, it was shown in \cite[Section 6.2]{FN08} that after some manipulations, the $1$-point function $\bfR_N$ can be written as
\begin{equation} \label{RN = RN1 + RN2}
    \bfR_N(x) = \bfR_N^1(x) + \bfR_N^2(x),
\end{equation}
where
\begin{align} \label{eq. RN1 discrete summation}
    \bfR_N^1(x) &= \sqrt{\frac{N}{2\pi}} e^{-\frac{N}{1+\tau}x^2} \sum_{k=0}^{N-2} \frac{(\tau/2)^k}{k!} H_k(\sqrt{\tfrac{N}{2\tau}}x)^2, 
    \\
    \begin{split} \label{eq. RN2 integral representation}
    \bfR_N^2(x) & = \frac{1}{\sqrt{2\pi}} \frac{(\tau/2)^{N-\frac{3}{2}}}{1+\tau} \frac{N}{(N-2)!} e^{-\frac{N}{2(1+\tau)}x^2}H_{N-1}(\sqrt{\tfrac{N}{2\tau}}x) \int_0^x e^{-\frac{N}{2(1+\tau)}u^2}H_{N-2}(\sqrt{\tfrac{N}{2\tau}}u) \,du.
   \end{split}
\end{align} 
See also \cite[Appendix A]{By23b}.
We first discuss the $\tau=0$ case.

\begin{rem} \label{Rem_tau=0}
    For the GinOE case $\tau=0$, the 1-point function $\bfR_N$ was obtained in \cite[Corollary 4.3]{EKS94} without the use of the skew-orthogonal polynomial formalism.
    In this case, the Hermite polynomials in \eqref{eq. RN1 discrete summation} and \eqref{eq. RN2 integral representation} are replaced by the monomials, which yields the expression 
    \begin{equation} \label{RN for tau=0}
        \bfR_N(x) \Big|_{\tau = 0} = \sqrt{\frac{N}{2\pi}} \Big( 1 - \frac{\gamma(N-1, N x^2)}{\Gamma(N-1)} + \frac{(2N)^{\frac{N-1}{2}}}{2\, \Gamma(N-1)} |x|^{n-1}e^{-\frac{N}{2}x^2} \gamma(\tfrac{N-1}{2}, \tfrac{N}{2}x^2) \Big),
    \end{equation}
    where $\gamma(s, x) := \int_0^x t^{s-1} e^{-t} \, dt$ is the lower incomplete gamma function.
    Let us mention that the expression \eqref{RN for tau=0} holds not only for the even integer $N$ but also for the odd integer $N$. 
   Due to the expression \eqref{RN for tau=0}, the analysis of $\bfR_N$ is straightforward and follows from the uniform asymptotic behaviours of the incomplete gamma function \cite[Section 11.2.4]{Te96}.
\end{rem}

From the above discussion, one can realize that the analysis of $\bfR_N$ highly relies on whether one can find an effective way to analyse the summation in \eqref{eq. RN1 discrete summation}. 
For the GOE case when $\tau=1$, this can be achieved using the classical Christoffel-Darboux formula 
$$
\sum_{k=0}^{N-2} \frac{1}{2^k\,k!} H_k(x)^2=  \frac{1}{2} \Big( H_{N-2}(x) H_{N-1}'(x) - H_{N-2}'(x) H_{N-1}(x) \Big).
$$
This allows to derive the large-$N$ behaviour of the $1$-point function for the GOE, see \cite{FFG06}.
In contrast, for general $\tau \in (0,1)$, the Christoffel-Darboux formula cannot be applied. 
Instead, an effective way to analyse $\bfR_N^1$ was found in \cite[Section 4]{BKLL23} through the observation
\begin{equation}
\Big( \bfR_N^1(x) \Big)' = - \sqrt{\frac{2}{\pi}} \frac{(\tau/2)^{N-\frac{3}{2}}}{1+\tau} \frac{N}{(N-2)!}    e^{-\frac{N}{1+\tau}u^2} H_{N-2}(\sqrt{\tfrac{N}{2\tau}}x) H_{N-1}(\sqrt{\tfrac{N}{2\tau}}x). 
\end{equation}
This follows from the generalised Christoffel-Darboux formula introduced by Lee and Riser in \cite{LR16}. 
Then we have the following lemma.

\begin{lemma}[\textbf{Integral representation of $\bfR_N^1$}]
The function $\bfR_N^1$ in \eqref{eq. RN1 discrete summation} has an integral representation 
\begin{equation} \label{eq. RN1 integral representation}
    \bfR_N^1(x) = \bfR_N^1(x_0) - \sqrt{\frac{2}{\pi}} \frac{(\tau/2)^{N-\frac{3}{2}}}{1+\tau} \frac{N}{(N-2)!}  \int_{x_0}^x e^{-\frac{N}{1+\tau}u^2} H_{N-2}(\sqrt{\tfrac{N}{2\tau}}u) H_{N-1}(\sqrt{\tfrac{N}{2\tau}}u) \,du,
\end{equation}
for any $x_0\in\R \cup \{\infty \}$.
\end{lemma}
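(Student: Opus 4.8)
The plan is to integrate the derivative identity for $\bfR_N^1$ that immediately precedes the lemma. We are given that
\[
\Big( \bfR_N^1(x) \Big)' = - \sqrt{\frac{2}{\pi}} \frac{(\tau/2)^{N-\frac{3}{2}}}{1+\tau} \frac{N}{(N-2)!} e^{-\frac{N}{1+\tau}x^2} H_{N-2}(\sqrt{\tfrac{N}{2\tau}}x) H_{N-1}(\sqrt{\tfrac{N}{2\tau}}x),
\]
so the first step is simply to apply the fundamental theorem of calculus on an interval $[x_0, x]$ (or $[x, x_0]$, depending on orientation), which yields
\[
\bfR_N^1(x) - \bfR_N^1(x_0) = - \sqrt{\frac{2}{\pi}} \frac{(\tau/2)^{N-\frac{3}{2}}}{1+\tau} \frac{N}{(N-2)!} \int_{x_0}^x e^{-\frac{N}{1+\tau}u^2} H_{N-2}(\sqrt{\tfrac{N}{2\tau}}u) H_{N-1}(\sqrt{\tfrac{N}{2\tau}}u) \,du,
\]
which is exactly \eqref{eq. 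RN1 integral representation} after rearranging. For finite $x_0 \in \R$, this is justified because $\bfR_N^1$ as given by the finite sum in \eqref{eq. RN1 discrete summation} is a polynomial times a Gaussian, hence $C^1$, and the displayed formula for its derivative is assumed from the Lee--Riser generalised Christoffel--Darboux identity.

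The only genuine subtlety — and the step I would treat most carefully — is the case $x_0 = \infty$, which is what makes the statement useful later (it lets one express $\bfR_N^1$ via an integral tail, matching the edge-scaling integral representations in Theorem~\ref{Thm_Edge density asym}). Here one must check two things: first, that $\lim_{x_0 \to \infty} \bfR_N^1(x_0) = 0$, and second, that the improper integral $\int_{x}^{\infty} e^{-\frac{N}{1+\tau}u^2} H_{N-2}(\sqrt{N/(2\tau)}\,u) H_{N-1}(\sqrt{N/(2\tau)}\,u)\,du$ converges. Both are immediate from the Gaussian decay: $\bfR_N^1(x_0)$ is $e^{-\frac{N}{1+\tau}x_0^2}$ times a fixed polynomial in $x_0$ (by \eqref{eq. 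RN1 discrete summation}), which tends to $0$; and the integrand is $e^{-\frac{N}{1+\tau}u^2}$ times a polynomial of degree $2N-3$ in $u$, which is absolutely integrable on $[x,\infty)$. Taking $x_0 \to \infty$ in the finite-$x_0$ identity and invoking these two limits gives the $x_0 = \infty$ case.

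In short, the argument is: (i) antidifferentiate the Lee--Riser identity via the fundamental theorem of calculus on a finite interval to get the formula for $x_0 \in \R$; (ii) observe from \eqref{eq. RN1 discrete summation} the Gaussian decay of $\bfR_N^1$ at infinity and of the integrand; (iii) pass to the limit $x_0 \to \infty$ to cover the remaining case. There is no real obstacle; the one point worth stating explicitly is the decay check in step (ii) that legitimises the $x_0 = \infty$ endpoint.
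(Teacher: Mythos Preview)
Your proposal is correct and matches the paper's approach exactly: the paper states the derivative identity (attributed to the Lee--Riser generalised Christoffel--Darboux formula) and then presents the lemma as an immediate consequence, without even a separate proof block. Your explicit verification of the $x_0=\infty$ case via Gaussian decay is a nice addition that the paper leaves implicit.
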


One can make use of the expression \eqref{eq. RN1 integral representation} to avoid dealing with the summation in \eqref{eq. RN1 discrete summation}. Indeed, analysing such a summation becomes particularly involved when considering the precise asymptotic expansion.
On the other hand, additional difficulties arise from the need to analyse the extra integral in \eqref{eq. RN1 integral representation}. 
As discussed later, this in turn requires analysing certain oscillatory integrals. 
Furthermore, one needs to derive the asymptotic value of the initial term $\bfR_N^1(x_0)$ by examining the summation \eqref{eq. RN1 discrete summation}.
To address this, we will choose a suitable value for $x_0$ according to the situation under consideration.

\medskip 

After these preparations, one can expect that the asymptotic behaviours of the Hermite polynomials will play a key role in the later analysis. 
These are known as the Plancherel-Rotach formulas, see e.g. \cite{SNWW23,Sz75} and references therein.
To describe the Plancherel-Rotach formulas in various regimes, it is convenient to use the following notations:
\begin{align}
\phi(x) &:= x\sqrt{1-x^2} - \arccos(x); \label{phi}
\\
\theta_m(x) &:= (m+\frac{1}{2}) \arccos(x) - \frac{\pi}{4}; \label{theta_m}
\\
\sigma(x) &:= x+\sqrt{x^2-1}. \label{sigma}
\end{align}

\begin{lemma}[\textbf{The Plancherel-Rotach formula for the oscillatory regime}]
\label{Lem_Plancherel-Rotach osc}
    Let $N$ and $m$ be integers, and $x \in (-1,1)$.
    Then as $N \to \infty$, we have
    \begin{align} \label{eq. Plancherel oscillatory regime}
    \begin{split}
        H_{N+m}(\sqrt{2N}x) &= \frac{1}{\sqrt{\pi}(1-x^2)^{\frac{1}{4}}} (N+m)! e^{\frac{N}{2}} 2^{\frac{N+m}{2}} N^{-\frac{N+m+1}{2}} e^{Nx^2}
        \\
        & \quad \times \Big( h_{(0)}^{ \rm osc }(x) + h_{(1)}^{ \rm osc }(x) N^{-1} + O(N^{-2}) \Big),
    \end{split}
    \end{align}
    where
    \begin{align}
    \begin{split}
        h_{(0)}^{ \rm osc }(x) &:= \cos( N\phi(x) - \theta_m(x) ),
        \\
        h_{(1)}^{ \rm osc }(x) &:= A_m(x) \cos(N\phi(x)-\theta_m(x)) + B_m(x) \sin(N\phi(x)-\theta_m(x)).
    \end{split}
    \end{align}
    Here, $\phi$ and $\theta_m$ are given in \eqref{phi} and \eqref{theta_m}, and
    \begin{align} \label{coeff Am Bm}
    \begin{split}
        A_m(x) &:= \frac{(6m^2+6m+1)x^2-6m^2-12m-4}{24(1-x^2)},
        \\
        B_m(x) &:= -\frac{(12m^2+12m+2)x^3-(12m^2+12m-3)x}{48(1-x^2)^{\frac{3}{2}}}.
    \end{split}
    \end{align}
\end{lemma}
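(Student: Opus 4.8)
The plan is to prove \eqref{eq. Plancherel oscillatory regime} by a steepest-descent analysis of the classical contour integral representation of the Hermite polynomials, pushed to second order. From the Hermite generating function and Cauchy's formula one has
\[
H_{N+m}(\sqrt{2N}\,x)=\frac{(N+m)!}{2\pi i}\oint \frac{e^{2\sqrt{2N}\,x\,t-t^{2}}}{t^{\,N+m+1}}\,dt ,
\]
and after the substitution $t=\sqrt{2N}\,w$ this becomes
\[
H_{N+m}(\sqrt{2N}\,x)=\frac{(N+m)!\,(2N)^{-(N+m)/2}}{2\pi i}\oint w^{-m-1}\,e^{N f(w)}\,dw,\qquad f(w):=4xw-2w^{2}-\log w .
\]
First I would locate the saddles of $f$: solving $f'(w)=4x-4w-1/w=0$, i.e. $4w^{2}-4xw+1=0$, gives $w_{\pm}=\tfrac12\big(x\pm i\sqrt{1-x^{2}}\big)=\tfrac12 e^{\pm i\arccos x}$, a complex-conjugate pair since $x\in(-1,1)$. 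A direct computation gives $f(w_{\pm})=x^{2}+\tfrac12+\log 2\pm i\,\phi(x)$ with $\phi$ as in \eqref{phi}, so that $e^{Nf(w_{\pm})}=2^{N}e^{N/2}e^{Nx^{2}}e^{\pm iN\phi(x)}$; together with $w_{\pm}^{-m-1}=2^{m+1}e^{\mp i(m+1)\arccos x}$ and the phase of $(-f''(w_{\pm}))^{-1/2}$ this already produces the overall prefactor of \eqref{eq. Plancherel oscillatory regime} and the oscillatory factor $\cos(N\phi(x)-\theta_m(x))$, with $\theta_m$ as in \eqref{theta_m}.

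Next I would deform the integration circle to the union of the steepest-descent contours through $w_{+}$ and $w_{-}$. For $x$ in a compact subset of $(-1,1)$ the two saddles stay bounded away from each other and from the branch point $w=0$, so the deformation is legitimate and the two saddle contributions are complex conjugates of one another. Applying the standard asymptotic expansion of a Laplace-type integral at a nondegenerate saddle,
\[
\frac{1}{2\pi i}\int w^{-m-1}e^{N f(w)}\,dw\;\sim\;\frac{w_{+}^{-m-1}e^{N f(w_{+})}}{\sqrt{-2\pi N\,f''(w_{+})}}\Big(1+\frac{c_{1}(w_{+})}{N}+O(N^{-2})\Big)+(\text{complex conjugate}),
\]
where $c_{1}$ is the usual explicit combination of $f''(w_{+}),f'''(w_{+}),f''''(w_{+})$ and of the first two derivatives of the amplitude $w^{-m-1}$ at $w_{+}$, produces both the leading term $h_{(0)}^{\rm osc}$ and the correction $h_{(1)}^{\rm osc}$ simultaneously. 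Inserting the explicit values $f''(w_{+})=w_{+}^{-2}-4=-8i\sqrt{1-x^{2}}\,e^{-i\arccos x}$, $f'''(w_{+})=-2w_{+}^{-3}$, $f''''(w_{+})=6w_{+}^{-4}$, and rewriting the conjugate pair of $O(N^{-1})$ contributions in the form $p(x)\cos(N\phi-\theta_m)+q(x)\sin(N\phi-\theta_m)$, one reads off exactly the coefficients $A_m(x),B_m(x)$ of \eqref{coeff Am Bm}. Alternatively, one may quote the oscillatory Plancherel--Rotach expansion of $H_n(\sqrt{2n}\,t)$ from \cite{SNWW23}, specialise to $n=N+m$ and $t=x(1+m/N)^{-1/2}$, and Taylor-expand in $m/N$ (using $\phi'(t)=2\sqrt{1-t^{2}}$, so that the $O(1)$ shift induced in $N\phi$ is precisely what turns $\theta_0$ into $\theta_m$); matching the two descriptions again gives $A_m,B_m$.

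The computations above are routine; the main obstacle is the bookkeeping needed to pass from the two complex-conjugate saddle contributions to a single real trigonometric expression with the exact rational coefficients $A_m(x),B_m(x)$. One must carry the $m$-dependent amplitude $w^{-m-1}$ together with its first two derivatives, the phase of $(-f''(w_{+}))^{-1/2}$, and the cross terms between the $O(N^{-1})$ amplitude correction and the fourth-order Gaussian moment, all through complex exponentials of $\arccos x$; a convenient device is to expand everything in the single variable $\varphi=\arccos x$ and use $\sqrt{1-x^{2}}=\sin\varphi$, collecting real and imaginary parts only at the very end. Finally, the uniformity of the remainder $O(N^{-2})$ on compact subsets of $(-1,1)$ follows from the quantitative error estimates of the steepest-descent method, using that $f''(w_{\pm})$ is bounded away from $0$ there (so the saddles remain nondegenerate) and that $m$ is held fixed.
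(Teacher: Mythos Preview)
Your steepest-descent outline is correct and would indeed produce \eqref{eq. Plancherel oscillatory regime} with the stated coefficients; the saddle locations, the value $f(w_{+})=x^{2}+\tfrac12+\log 2+i\phi(x)$, and $f''(w_{+})=-8i\sqrt{1-x^{2}}\,e^{-i\arccos x}$ are all right, and the bookkeeping you describe for extracting $A_m,B_m$ from the second-order Laplace correction is the standard one.

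However, you should be aware that the paper does not actually prove this lemma. It is stated in the preliminaries section as a known result, with the sentence ``These are known as the Plancherel--Rotach formulas, see e.g.\ \cite{SNWW23,Sz75} and references therein'' serving as the entire justification. In particular, the precise form with the explicit $A_m,B_m$ comes from \cite{SNWW23}, which also supplies the uniform error bounds that the paper later invokes (cf.\ Remark~\ref{Rem_Error bound for P-R exp}). So the ``paper's own proof'' is simply a citation.

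Your proposal therefore goes well beyond what the paper does: you sketch an ab initio derivation, whereas the paper treats the formula as input. Both routes are legitimate. Your approach is self-contained and makes clear where the rational functions $A_m,B_m$ come from; the paper's approach is economical and, importantly, inherits the uniform-in-$x$ error control proved in \cite{SNWW23}, which is what the subsequent oscillatory-integral arguments in Section~\ref{sec_global} actually need. If you pursue your direct proof, you would still have to verify that your $O(N^{-2})$ remainder is uniform on compacta of $(-1,1)$ with enough care to justify integrating it against $e^{\alpha^2 u^2/4}(1-u^2/4)^{-1/2}$ as done in Lemmas~\ref{Lem_RN1(x) wH} and~\ref{Lem_RN2(x) wH}; this is exactly what the cited reference \cite{SNWW23} provides off the shelf. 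Your suggested alternative---quoting \cite{SNWW23} for $H_n(\sqrt{2n}\,t)$ with $n=N+m$ and re-expanding---is essentially how one would reconcile the paper's citation with the specific form stated here.
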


\begin{lemma}[\textbf{The Plancherel-Rotach formula for the critical regime}]
\label{Lem_Plancherel-Rotach crit}
    Let $N$ and $m$ be integers, and $\xi \in \R$.
    Then as $N \to \infty$, we have
    \begin{equation}
    e^{-N x^2} H_{N+m}(\sqrt{2N} x) = \pi^{\frac{1}{4}}2^{\frac{N}{2}+\frac{1}{4}}(N!)^{\frac{1}{2}} N^{-\frac{1}{12}} \Big( \Airy(\xi) - (m+\tfrac{1}{2})\Airy'(\xi) N^{-\frac{1}{3}} + O(N^{-\frac{2}{3}}) \Big),
    \end{equation}
    for $x = 1 + 2^{-1} N^{-\frac{2}{3}} \xi $, where $\Airy$ is the Airy function.
\end{lemma}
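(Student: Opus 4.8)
The plan is to establish the critical-regime Plancherel--Rotach formula by reducing it to the corresponding asymptotics of the Airy-type Hermite scaling, following the route used in the oscillatory case but near the turning point $x=1$. First I would set $x = 1 + 2^{-1} N^{-2/3}\xi$ and pass to the standard normalised Hermite function $\widetilde{H}_n(y) = e^{-y^2/2} H_n(y) / (2^n n!\sqrt{\pi})^{1/2}$, whose large-$n$ behaviour at the soft edge is the classical statement $\widetilde{H}_n(\sqrt{2n+1}\,(1+2^{-1}(2n+1)^{-2/3}\xi)) \to \mathrm{const}\cdot n^{-1/12}\Airy(\xi)$ together with its first correction. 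The bookkeeping here is: rewrite $e^{-Nx^2}H_{N+m}(\sqrt{2N}x)$ in terms of $\widetilde{H}_{N+m}$ evaluated at the shifted argument, absorb the Gaussian prefactor $e^{-Nx^2}$ against the $e^{-y^2/2}$ in $\widetilde{H}$, and track the factorial and power-of-$N$ prefactors, which produces the stated $\pi^{1/4}2^{N/2+1/4}(N!)^{1/2}N^{-1/12}$.

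The two genuine sources of the correction term are (a) the mismatch between the natural scaling variable $\sqrt{2(N+m)+1}$ and the chosen $\sqrt{2N}$, and (b) the index shift by $m$ inside a formula indexed by $N$. I would handle both by Taylor-expanding: writing $\sqrt{2N}\,x = \sqrt{2(N+m)+1}\,(1 + \delta_N)$ and expanding $\delta_N = -(m+\tfrac12)/(2N) + O(N^{-2})$, then feeding this through the edge asymptotics. The key cancellation/identity is that a shift of the spectral argument by $c\,N^{-1/3}$ shifts the Airy argument by $2c N^{1/3}\cdot(\text{derivative factor})$ at the relevant scale, so the $O(N^{-1})$ shift in the prefactor translates into an $O(N^{-1/3})\Airy'(\xi)$ correction; one checks the constant works out to exactly $-(m+\tfrac12)$. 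Alternatively, I can bypass (a)+(b) by quoting a uniform Plancherel--Rotach expansion with explicit next-order term already in the literature (e.g.\ the reference \cite{SNWW23} flagged earlier in the excerpt, or \cite{Sz75}) and simply specialising it to $n=N+m$, $x=1+2^{-1}N^{-2/3}\xi$, then re-expanding in $N$ rather than $n$; this is the cleanest path and is the one I would actually write up.

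The main obstacle is controlling the error uniformly in $\xi$ on compact sets while the index carries the shift $m$: one must ensure the $O(N^{-2/3})$ remainder is genuinely uniform and that no hidden $\xi$-dependence degrades it near the transition between the Airy and oscillatory regimes. This requires either a uniform Airy-type asymptotic with explicit remainder bounds (available in Olver-style treatments and in \cite{SNWW23}) or an explicit matching argument; I would lean on the former. A secondary, purely technical point is verifying that the various $\Gamma$-function and power-of-$2$ prefactors assemble into the compact constant stated, which is routine Stirling but easy to get wrong by a factor; I would double-check it by testing the $m=0$ case against the known GOE soft-edge normalisation used in \cite{FFG06}.
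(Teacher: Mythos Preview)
The paper does not give a proof of this lemma at all: it is stated in Section~\ref{sec_Pre} as one of three standard Plancherel--Rotach formulas, prefaced by the sentence ``These are known as the Plancherel--Rotach formulas, see e.g.\ \cite{SNWW23,Sz75} and references therein,'' and then simply used. So your ``cleanest path'' --- quote a uniform Plancherel--Rotach expansion from \cite{SNWW23} or \cite{Sz75} and specialise --- is exactly what the paper does, except that the paper does not even write out the specialisation.

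Your more detailed alternative (pass to the normalised Hermite function $\widetilde{H}_n$, track the index shift $N\mapsto N+m$ and the argument shift $\sqrt{2N}$ vs.\ $\sqrt{2(N+m)+1}$ by Taylor expansion, and read off the $-(m+\tfrac12)\Airy'(\xi)N^{-1/3}$ correction) is a genuine proof sketch that goes beyond what the paper supplies, and the mechanism you describe is correct. The only caveat is the one you already flag: uniformity of the $O(N^{-2/3})$ remainder requires a source with explicit error control (which \cite{SNWW23} provides), so in a written-up version you should make that citation do the work rather than leave it as a matching argument.
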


\begin{lemma}[\textbf{The Plancherel-Rotach formula for the exponential regime}]
\label{Lem_Plancherel-Rotach exp}
    Let $N$ and $m$ be integers, and $x\in(1,\infty)$.
    Then as $N\to\infty$, we have
    \begin{equation} \label{eq. Plancherel exponential regime}
        H_{N+m}(\sqrt{2N}x) = \frac{1}{2\sqrt{\pi} (x^2-1)^\frac{1}{4}} e^{-\frac{N}{2}}  2^\frac{N+m}{2} N^{\frac{N+m-1}{2}} \sigma(x)^{N+m+\frac{1}{2}} e^{Nx/\sigma(x)}  \Big( 1 + O(N^{-1})\Big),
    \end{equation}
    where $\sigma$ is given in \eqref{sigma}.
\end{lemma}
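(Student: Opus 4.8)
The plan is to read off the asymptotics from the classical contour representation of the Hermite polynomials via the saddle‑point method. From $e^{2zt-t^{2}}=\sum_{k\ge 0}H_{k}(z)t^{k}/k!$ one has, for any positively oriented loop around $0$,
\begin{equation*}
H_{N+m}(\sqrt{2N}x)=\frac{(N+m)!}{2\pi i}\oint \frac{e^{2\sqrt{2N}\,x\,t-t^{2}}}{t^{N+m+1}}\,dt ,
\end{equation*}
and the rescaling $t=\sqrt{N}\,w$ turns this into $H_{N+m}(\sqrt{2N}x)=(N+m)!\,N^{-(N+m)/2}\cdot\frac{1}{2\pi i}\oint w^{-m-1}e^{N G(w)}\,dw$ with
\begin{equation*}
G(w)=2\sqrt{2}\,x\,w-w^{2}-\log w ,
\end{equation*}
so the large parameter $N$ multiplies $G$. (Alternatively one may run the same argument on the Gaussian representation $H_{N+m}(\sqrt{2N}x)=\tfrac{2^{N+m}}{\sqrt\pi}\int_{\R}(\sqrt{2N}x+iy)^{N+m}e^{-y^{2}}\,dy$ after $y=\sqrt{N}u$, reconstructing the factorial from Stirling's formula; the contour version is marginally cleaner since the factorial already appears, exactly as in Lemma~\ref{Lem_Plancherel-Rotach osc}.)

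The critical points of $G$ solve $2w^{2}-2\sqrt{2}\,x\,w+1=0$, i.e.\ $w_{\pm}=\sigma(x)^{\pm1}/\sqrt{2}$ with $\sigma(x)=x+\sqrt{x^{2}-1}$; for $x>1$ both are real and positive, and the relevant one is the smaller root $w_{-}=1/(\sqrt{2}\,\sigma(x))$. One can take the contour to be the circle $|w|=w_{-}$: a short computation shows that on it $\operatorname{Re}G$ has a strict maximum only at $w=w_{-}$ (the critical points of $\theta\mapsto\operatorname{Re}G(w_{-}e^{i\theta})$ are just $\theta=0,\pi$, since $\cos\theta=x\sigma(x)>1$ is excluded), so the competing saddle $w_{+}$, although it carries a larger value of $\operatorname{Re}G$, plays no role and the integral localises at $w_{-}$ with exponentially small tails. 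The remaining algebra is dictated by $\sigma^{2}-2x\sigma+1=0$, which gives $2x=\sigma+\tfrac1\sigma$, $\tfrac{1}{2\sigma^{2}}=\tfrac{x}{\sigma}-\tfrac12$, and $\sigma^{2}-1=2\sigma\sqrt{x^{2}-1}$; with these,
\begin{equation*}
G(w_{-})=\tfrac12+\tfrac{x}{\sigma}+\tfrac12\log 2+\log\sigma ,\qquad G''(w_{-})=-2+\tfrac{1}{w_{-}^{2}}=2(\sigma^{2}-1)=4\sigma\sqrt{x^{2}-1}>0 .
\end{equation*}
Since $G''(w_{-})>0$ the steepest‑descent direction at $w_{-}$ is vertical, so parametrising $w=w_{-}+is$ and applying Laplace's lemma gives $\frac{1}{2\pi i}\oint w^{-m-1}e^{NG(w)}\,dw=w_{-}^{-m-1}e^{NG(w_{-})}\big(2\pi N\,G''(w_{-})\big)^{-1/2}\big(1+O(N^{-1})\big)$; the correction is $O(N^{-1})$, not $O(N^{-1/2})$, because the cubic term of $G$ at $w_{-}$ is odd and integrates out. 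Substituting $w_{-}^{-m-1}=2^{(m+1)/2}\sigma^{m+1}$, $e^{NG(w_{-})}=2^{N/2}\sigma^{N}e^{N/2}e^{Nx/\sigma}$ and $\sqrt{G''(w_{-})}=2\sqrt{\sigma}\,(x^{2}-1)^{1/4}$, restoring the prefactor $(N+m)!\,N^{-(N+m)/2}$, and collecting the powers of $2$ and $N$, one obtains
\begin{equation*}
H_{N+m}(\sqrt{2N}x)=\frac{(N+m)!}{2\sqrt{\pi}\,(x^{2}-1)^{1/4}}\,2^{(N+m)/2}\,N^{-(N+m+1)/2}\,\sigma(x)^{N+m+1/2}\,e^{N/2}\,e^{Nx/\sigma(x)}\,\big(1+O(N^{-1})\big),
\end{equation*}
which is the asserted expansion (its displayed form follows upon inserting $(N+m)!=\sqrt{2\pi N}\,N^{N+m}e^{-N}(1+O(N^{-1}))$).

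The step I expect to cost the most care is the contour estimate just outlined — verifying that on $|w|=w_{-}$ the maximum of $\operatorname{Re}G$ is attained solely at $w_{-}$, quantifying the exponential gain off the saddle, and supplying the Watson‑type bound that makes the $O(N^{-1})$ error rigorous and uniform for $x$ in compact subsets of $(1,\infty)$; this last uniformity is then automatic because $w_{-}$ stays bounded away from $0$ there. Compared with the oscillatory regime of Lemma~\ref{Lem_Plancherel-Rotach osc}, this regime is in fact easier, since a single saddle contributes and there is no cancellation, so beyond the contour estimate the work is routine bookkeeping. An essentially equivalent alternative is to quote the classical Plancherel--Rotach formula for Hermite polynomials beyond the oscillatory band (see \cite{Sz75,SNWW23}) and carry out the change of variable and index shift $2N+1\rightsquigarrow 2N$, $n\rightsquigarrow N+m$, tracking the induced lower‑order terms.
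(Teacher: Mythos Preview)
The paper does not supply its own proof of this lemma: it is quoted as a classical fact with references to \cite{Sz75,SNWW23}. Your saddle-point derivation from the generating-function contour integral is the standard route to such formulas and is carried out correctly --- the identification of the relevant saddle $w_{-}=1/(\sqrt{2}\,\sigma(x))$, the contour choice $|w|=w_{-}$ with a unique maximum of $\operatorname{Re}G$ there, and the evaluation of $G(w_{-})$ and $G''(w_{-})$ are all sound, and the $O(N^{-1})$ error (rather than $O(N^{-1/2})$) is indeed justified by the odd parity of the cubic correction, as you note.

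There is, however, a discrepancy you gloss over in your last parenthetical remark. Your derived formula carries the prefactor $(N+m)!\,e^{N/2}N^{-(N+m+1)/2}$, exactly parallel to Lemma~\ref{Lem_Plancherel-Rotach osc}, whereas the displayed formula \eqref{eq. Plancherel exponential regime} has $e^{-N/2}N^{(N+m-1)/2}$ with no factorial. Substituting $(N+m)!=\sqrt{2\pi N}\,N^{N+m}e^{-N}(1+O(N^{-1}))$ into your expression leaves an \emph{extra} factor of $\sqrt{2\pi N}$ relative to \eqref{eq. Plancherel exponential regime}; the two do not coincide. In fact your normalisation is the correct one --- one checks directly that it reproduces the reformulation in Lemma~\ref{Lem_P-R exp tau fixed} used downstream, whereas \eqref{eq. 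Plancherel exponential regime} as printed would not --- so the displayed formula appears to be missing this factor. You should flag the mismatch rather than assert that Stirling effects the conversion.
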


Note that in the analysis of \eqref{RN = RN1 + RN2}, we shall use the asymptotic behaviours of the Hermite polynomial of the form $H_{N+m}(\sqrt{2N} \frac{x}{ 2\sqrt{\tau} }). $
On the one hand, notice here that the foci of the ellipse \eqref{ellipse} are located at $\pm 2\sqrt{\tau}$.  
Therefore, depending on the position of $x$ with respect to the foci, we need to apply the Plancherel-Rotach formula in different regimes:
\begin{itemize}
    \item if $|x|<2\sqrt{\tau}$, it corresponds to the oscillatory regime, Lemma~\ref{Lem_Plancherel-Rotach osc}, cf. Region I in Figure~\ref{Fig_PR regimes}; 
    \smallskip 
    \item if $|x \pm 2\sqrt{\tau}|=O(N^{-2/3})$, it corresponds to the critical regime, Lemma~\ref{Lem_Plancherel-Rotach crit}, cf. Region II in Figure~\ref{Fig_PR regimes};
    \smallskip 
    \item if $|x|>2\sqrt{\tau}$, it corresponds to the exponential regime, Lemma~\ref{Lem_Plancherel-Rotach exp}, cf. Region III in Figure~\ref{Fig_PR regimes}. 
\end{itemize}
(Let us mention that the line segment connecting two foci coincides with the limiting zero set of the planar Hermite polynomial known as the limiting skeleton.)
Note also that in the weakly non-Hermitian regime where $\tau \uparrow 1$, the focus $2\sqrt{\tau}$ becomes close to the right-most point $1+\tau$ of the ellipse. 
In summary, depending on the situations, we will use the following strategy.

\begin{itemize}
\item \textbf{Global and strongly non-Hermitian regime, Theorem~\ref{Thm_Bulk density} (i).} In this case, we put $x_0=0$. Then it is required to apply the asymptotic formulas for all Regions I, II, and III. 
Indeed, we shall use an estimate \eqref{eq. rough bound of Hermite polynomial} that allows us avoid applying the formulas in Regions I and II. 
\smallskip
\item \textbf{Global and weakly non-Hermitian regime, Theorem~\ref{Thm_Bulk density} (ii).} Again, we choose $x_0=0$ and apply the asymptotic formulas in Region I.
\smallskip
\item \textbf{Edge scaling and strongly non-Hermitian regime, Theorem~\ref{Thm_Edge density asym} (i).} In this case, we put $x_0=+\infty$. Then it suffices to apply the asymptotic formula in Region III.
\smallskip
\item \textbf{Edge scaling and weakly non-Hermitian regime, Theorem~\ref{Thm_Edge density asym} (ii).} Again, we choose $x_0=+\infty$ and apply the asymptotic formulas in Regions II and III.
\end{itemize}

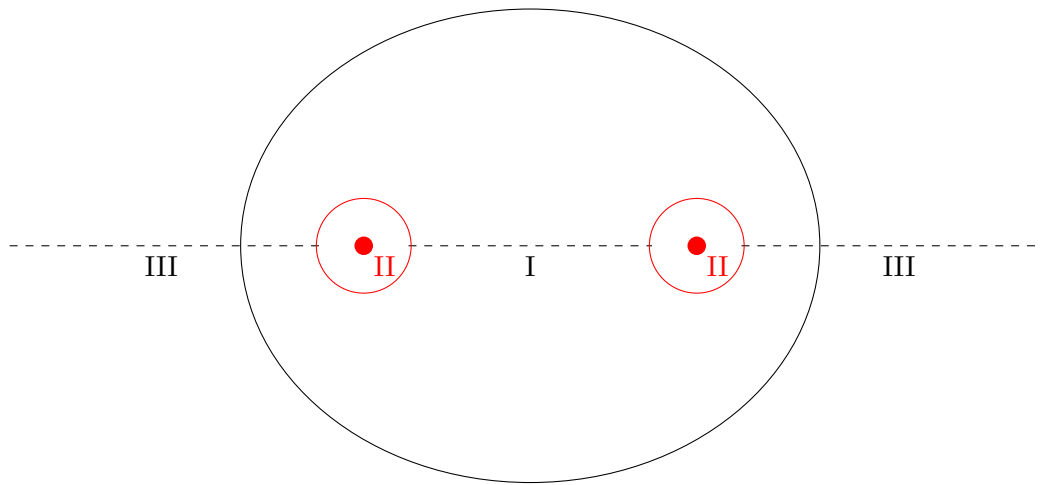
\begin{figure}[h!]
    \centering
\begin{tikzpicture}
    \def\t{0.1} 
    \def\scalefactor{3.5} 
    \begin{scope}[scale=\scalefactor]
        \draw[rotate=0] (0,0) ellipse ({1+\t} and {1-\t});
        
        \draw[dashed] ({2*sqrt(\t)+0.17},0) -- (2,0) node[midway, below] {III};

         \draw[dashed] ({-2*sqrt(\t)-0.17},0) -- (-2,0) node[midway, below] {III};
        
         \draw[dashed] ({-2*sqrt(\t)+0.17},0) -- ({2*sqrt(\t)-0.17},0) node[midway, below] {I};
         
         \draw[red] ({2*sqrt(\t)},0) circle (0.18) node[below right] {II};
        \draw[red] ({-2*sqrt(\t)},0) circle (0.18) node[below right] {II};
        \fill[red] ({2*sqrt(\t)}, 0) circle (1pt);
         \fill[red] ({-2*sqrt(\t)}, 0) circle (1pt);
    \end{scope}
\end{tikzpicture}
    \caption{Illustration of the regions to which different forms of the Plancherel-Rotach formula apply. Here, the dots indicate two foci of the ellipse \eqref{ellipse}.}
    \label{Fig_PR regimes}
\end{figure}

\section{Global real eigenvalue densities} \label{sec_global}
In this section, we prove Theorem~\ref{Thm_Bulk density}.
We shall divide the proof into two lemmas which derive the asymptotic behaviours of $\bfR_N^1$ and $\bfR_N^2$ respectively.
Then, by \eqref{RN = RN1 + RN2}, the main assertions follow from the lemmas.
While the finite size corrections in \eqref{RN asymp sH} and \eqref{RN asymp wH} differ qualitatively, the overall strategies for deriving these corrections are similar. The outline of the proof can be summarised as follows: 
\begin{itemize}
    \item we first analyse the asymptotic behaviour of $\bfR_N^1(0)$ in Lemma~\ref{Lem_RN1(0) sH} (\textit{resp.}, Lemma~\ref{Lem_RN1(0) wH}) using the properties of the hypergeometric functions;
\smallskip 
    \item using the asymptotic behaviours of the Hermite polynomials \eqref{eq. rough bound of Hermite polynomial} and \eqref{eq. Plancherel oscillatory regime (tau fixed)}, we then derive the asymptotic behaviour of $\bfR_N^1(x)$ in Lemma~\ref{Lem_RN1(x) sH} (\textit{resp.}, Lemma~\ref{Lem_RN1(x) wH});
\smallskip 
    \item similarly, we derive the asymptotic behaviour of $\bfR_N^2(x)$ in Lemma~\ref{Lem_RN2(x) sH} (\textit{resp.}, Lemma~\ref{Lem_RN2(x) wH}).
\end{itemize}

\subsection{Strong non-Hermiticity}
In this subsection, we prove Theorem~\ref{Thm_Bulk density} (i).
We assume throughout this subsection that $\tau\in(0,1)$ is fixed.
For $\tau=0$, see Remark~\ref{Rem_tau=0}.

We begin with the analysis of the large-$N$ behaviour of $\bfR_N^1(0)$.
For this purpose, recall that the regularized hypergeometric function $\gaussF(a,b;c;z)$ is defined by
\begin{equation} \label{regularized 2F1}
    \gaussF(a,b;c;z) := \frac{1}{\Gamma(a)\Gamma(b)} \sum_{s=0}^{\infty} \frac{\Gamma(a+s)\Gamma(b+s)}{\Gamma(c+s) \ s!}z^s, \qquad (|z|<1)
\end{equation}
and by the analytic continuation for $|z| \ge 1$, see e.g. \cite[Chapter 15]{NIST}.

\begin{lemma} \label{Lem_RN1(0) sH}
    Let $\tau\in(0,1)$ be fixed.
    Then as $N\to\infty$, we have
    \begin{equation}
    \bfR_N^1(0) = \Big(\frac{N}{2 \pi (1-\tau^2)}\Big)^\frac{1}{2} + O(e^{-\epsilon N}),
    \end{equation}
    for some $\epsilon>0$.
\end{lemma}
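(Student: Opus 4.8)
The plan is to evaluate the summation in \eqref{eq. RN1 discrete summation} at $x=0$ in closed form and then extract its large-$N$ asymptotics. At $x=0$ the exponential prefactor is $1$, so
\begin{equation*}
\bfR_N^1(0) = \sqrt{\frac{N}{2\pi}} \sum_{k=0}^{N-2} \frac{(\tau/2)^k}{k!} H_k(0)^2.
\end{equation*}
Since $H_{2j+1}(0)=0$ and $H_{2j}(0) = (-1)^j \frac{(2j)!}{j!}$, only even indices survive, and the sum becomes $\sum_{j} \frac{(\tau/2)^{2j}}{(2j)!} \big(\frac{(2j)!}{j!}\big)^2 = \sum_j \frac{(2j)!}{(j!)^2}\, (\tau^2/4)^j$. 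Recognising $\binom{2j}{j} = (2j)!/(j!)^2$ and the generating function $\sum_{j\ge 0} \binom{2j}{j} z^j = (1-4z)^{-1/2}$, the \emph{infinite} sum equals $(1-\tau^2)^{-1/2}$. Hence
\begin{equation*}
\bfR_N^1(0) = \sqrt{\frac{N}{2\pi}} \left( \frac{1}{\sqrt{1-\tau^2}} - \sum_{2j \ge N-1} \binom{2j}{j} (\tau^2/4)^j \right),
\end{equation*}
and it remains to show the tail is $O(e^{-\epsilon N})$. This is where the hypergeometric machinery of \eqref{regularized 2F1} enters: the tail can be written as a Gauss hypergeometric remainder (equivalently, via the regularized ${}_2\mathbf{F}_1$ evaluated near $z=\tau^2$), but in fact a direct estimate suffices. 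Using $\binom{2j}{j} \le 4^j$ we get $\binom{2j}{j}(\tau^2/4)^j \le \tau^{2j}$, so the tail is bounded by $\sum_{j \ge (N-1)/2} \tau^{2j} = O(\tau^{N-1}) = O(e^{-\epsilon N})$ with $\epsilon = -\log\tau > 0$ since $\tau \in (0,1)$.

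I expect no serious obstacle here — the argument is essentially the closed-form evaluation of a Gaussian binomial sum plus a geometric tail bound. The only points requiring care are: (a) bookkeeping the upper limit $N-2$ of the sum against the parity of $N$ (with $N$ even, the largest even index is $N-2$, so the retained terms are $j = 0, \dots, (N-2)/2$ and the tail starts at $j = N/2$); and (b) making sure the truncation error is genuinely exponentially small rather than merely $o(1)$, which the bound $\binom{2j}{j}(\tau^2/4)^j \le \tau^{2j}$ handles cleanly. If one prefers to phrase the remainder through the analytic continuation of $\gaussF$ as advertised in the surrounding text, one would write $\sum_{k=0}^{N-2}\frac{(\tau/2)^k}{k!}H_k(0)^2$ in terms of a terminating ${}_2\mathbf{F}_1$ and identify the complementary piece, but the elementary route above already delivers the claimed estimate with an explicit $\epsilon$.
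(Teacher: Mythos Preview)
Your proof is correct and follows the same strategy as the paper (write the finite sum as the full series $(1-\tau^2)^{-1/2}$ minus a tail, then show the tail is exponentially small), though your tail bound via $\binom{2j}{j}\le 4^j$ and a geometric series is more elementary than the paper's route, which expresses the tail through the regularized ${}_2\mathbf{F}_1$ and bounds it via Euler's integral representation \eqref{NIST (15.6.1)}. One small point: because of the prefactor $\sqrt{N/(2\pi)}$ the actual error in $\bfR_N^1(0)$ is $O(\sqrt{N}\,\tau^{N})$, so strictly one should take any $\epsilon\in(0,-\log\tau)$ rather than $\epsilon=-\log\tau$ exactly, but this does not affect the stated conclusion.
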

\begin{proof}
    By using \eqref{eq. RN1 discrete summation}, \eqref{regularized 2F1} and the  Hermite number
    $$
    H_k(0) = \frac{\sqrt{\pi}}{\Gamma(\frac{1-k}{2})} 2^k,
    $$ 
    we obtain
    \begin{align}
    \begin{split} \label{computation RN1(0)}
        \bfR_N^1(0) &= \Big( \frac{N}{2\pi} \Big)^{\frac12} \sum_{k=0}^{N-2} \frac{(\tau/2)^k}{k!} H_k(0)^2 = \Big(  \frac{N}{2} \pi  \Big)^{\frac12} \sum_{k=0}^{N-2}  \frac{(2\tau)^k}{k!} \frac{1}{ \Gamma(\frac{1-k}{2})^2 } 
        \\
        &= \Big(\frac{N}{2 \pi (1-\tau^2)}\Big)^\frac{1}{2} - \Big( \frac{N}{2\pi} \Big)^{\frac{1}{2}} \tau^N \frac{ \Gamma(\frac{N+1}{2}) }{ \sqrt{\pi} } \gaussF(1,\tfrac{N+1}{2};\tfrac{N+2}{2};\tau^2).
    \end{split}
    \end{align}
    We now  apply the Euler integral formula \cite[Eq.(15.6.1)]{NIST}: for $c>b>0$,
    \begin{equation} \label{NIST (15.6.1)}
        \gaussF(a,b;c;z)=\frac{1}{\Gamma(b)\Gamma(c-b)} \int_{0}^{1}\frac{t^{b-1}(1-t)^{c-b-1}}{(1-zt)^{a}} \,dt.
    \end{equation}
    This gives rise to 
    \begin{equation}
        \frac{ \Gamma(\frac{N+1}{2}) }{ \sqrt{\pi} } \gaussF(1,\tfrac{N+1}{2};\tfrac{N+2}{2};\tau^2)
        = \frac{1}{\pi} \int_0^1 \frac{ t^\frac{N-1}{2} (1-t)^{-\frac{1}{2}} }{ 1-\tau^2t } \,dt.
    \end{equation}
    Then it follows from 
    \begin{equation*}
        0 \leq \frac{t^{\frac{N-1}{2}}}{1-\tau^2 t} \leq \frac{1}{1-\tau^2}, \qquad t\in[0,1],
    \end{equation*}
    that
    \begin{equation*}
        0 \leq \frac{ \Gamma(\frac{N+1}{2}) }{ \sqrt{\pi} } \gaussF(1,\tfrac{N+1}{2};\tfrac{N+2}{2};\tau^2)        \leq  \frac{1}{\pi}\frac{2}{1-\tau^2}.
    \end{equation*}
    Since $\tau \in (0,1)$, one can observe that the second term in the last expression of \eqref{computation RN1(0)} decays exponentially as $N \to \infty$, and the proof is complete.
\end{proof}

It remains to show the exponential decay of the integrals in \eqref{eq. RN2 integral representation} and \eqref{eq. RN1 integral representation} as $N \to \infty$.
We first prove the exponential convergence of $\bfR_N^1$.
\begin{lemma}  \label{Lem_RN1(x) sH}
    Let $x \in [0, 1+\tau)$. Then as $N \to \infty$, we have
    \begin{equation}
        \mathbf{R}_N^1(x) = \mathbf{R}_N^1(0) + O(e^{-\epsilon N}),
    \end{equation}
    for some constant $\epsilon>0$.
\end{lemma}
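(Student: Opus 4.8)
The plan is to use the integral representation \eqref{eq. RN1 integral representation} with the choice $x_0 = 0$, so that
\[
\bfR_N^1(x) - \bfR_N^1(0) = - \sqrt{\tfrac{2}{\pi}} \frac{(\tau/2)^{N-\frac{3}{2}}}{1+\tau} \frac{N}{(N-2)!} \int_0^x e^{-\frac{N}{1+\tau}u^2} H_{N-2}(\sqrt{\tfrac{N}{2\tau}}u) H_{N-1}(\sqrt{\tfrac{N}{2\tau}}u)\,du,
\]
and to show this integral, together with its prefactor, is $O(e^{-\epsilon N})$ uniformly for $x$ in compact subsets of $[0,1+\tau)$. The natural substitution is $u = 2\sqrt{\tau}\,v$, which turns the Hermite arguments into $\sqrt{2N}\,v$ and recasts the integral over $v \in [0, x/(2\sqrt{\tau})]$; note that since $x < 1+\tau$ we have $x/(2\sqrt{\tau}) < (1+\tau)/(2\sqrt{\tau})$, and this last quantity is $>1$ (it equals $1$ only when $\tau=1$), so the range of $v$ straddles the transition point $v=1$ and we must split the integral into the oscillatory part $v \in [0,1-\delta]$, the critical window $|v-1| = O(N^{-2/3})$, and the exponential part $v \in [1+\delta, x/(2\sqrt\tau)]$.

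First I would record the magnitude of the prefactor. Combining $(\tau/2)^{N-3/2}$, $N/(N-2)!$, and the leading factors $2^{(N+m)/2} N^{-(N+m+1)/2} e^{\frac N2}(N+m)!$ coming from two applications of either Lemma~\ref{Lem_Plancherel-Rotach osc} (in the oscillatory range) or Lemma~\ref{Lem_Plancherel-Rotach exp} (in the exponential range), and using Stirling, one finds that all the factorial and power-of-$N$ growth cancels, leaving a prefactor that is polynomially bounded in $N$. The exponential gain must therefore come entirely from the $u$-dependent (equivalently $v$-dependent) part of the integrand. In the oscillatory regime the relevant exponential factor is $e^{-\frac{N}{1+\tau}u^2} e^{Nv^2} = \exp\big(N v^2(1 - \tfrac{4\tau}{1+\tau})\big) = \exp\big(-N v^2 \tfrac{(1-\tau)(1+\tau)^{-1}\cdot\text{(positive const)}}{}\big)$; more carefully, $-\tfrac{N}{1+\tau}(2\sqrt\tau v)^2 + N v^2 = N v^2 \tfrac{1+\tau - 4\tau}{1+\tau} = N v^2 \tfrac{(1-\tau)(1 - 3\tau)/?}{}$ — the point is simply that $1 - \tfrac{4\tau}{1+\tau} = \tfrac{1-3\tau}{1+\tau}$, which need not be negative, so the crude pointwise bound is \emph{not} enough and one must instead bound the Hermite product uniformly by the rough estimate \eqref{eq. rough bound of Hermite polynomial} and then control $\int_0^{x/(2\sqrt\tau)} e^{-\frac{N}{1+\tau}(2\sqrt\tau v)^2} \cdot (\text{poly}) \, dv$ against the contribution at $v=1$, exploiting that the full combination (prefactor times integrand) at the right endpoint is exponentially small because $x/(2\sqrt\tau)$ stays strictly inside the ellipse's half-width.

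The cleanest route, and the one the excerpt signals when it says it "shall use an estimate \eqref{eq. rough bound of Hermite polynomial} that allows us to avoid applying the formulas in Regions I and II," is: on $v \in [0, 1+\delta']$ use the rough uniform bound on $|H_{N-2}H_{N-1}|$ together with the explicit exponential $e^{-\frac{N}{1+\tau}u^2}$, checking that the worst case occurs at the endpoint and that there the prefactor $(\tau/2)^{N}N/(N-2)!$ times the Hermite bound times $e^{-\frac{N}{1+\tau}x^2}$ is $O(e^{-\epsilon N})$ — this reduces to the elementary inequality, valid for fixed $\tau\in(0,1)$, that $\tau^N$ times the relevant polynomial growth of the rough Hermite bound evaluated at argument $\le \sqrt{2N}(1+\delta')$ still decays exponentially; and on $v \ge 1+\delta'$ (the exponential regime) use Lemma~\ref{Lem_Plancherel-Rotach exp} directly, where $H_{N-2}(\sqrt{2N}v)H_{N-1}(\sqrt{2N}v)$ contributes $\sigma(v)^{2N+\cdots}e^{2Nv/\sigma(v)}$ and one verifies via the explicit phase function that $\tfrac{\tau^N}{(1+\tau)}\cdot \sigma(v)^{2N} e^{2Nv/\sigma(v)} e^{-\frac{N}{1+\tau}(2\sqrt\tau v)^2}$ is exponentially small for all $v \in [1+\delta', (1+\tau)/(2\sqrt\tau) - \delta'']$, which follows from the strict concavity/monotonicity of the exponent and the fact that $(1+\tau)/(2\sqrt\tau)$ is strictly inside the domain where equality would hold. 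The main obstacle is precisely this last bookkeeping: assembling the Stirling asymptotics of the prefactor and the Plancherel–Rotach exponential factors and checking that their sum, as a function of $v$, stays strictly negative (times $N$) on the whole compact $v$-range, uniformly in $x$ bounded away from $1+\tau$; the oscillatory contribution, by contrast, is routine once the rough bound \eqref{eq. rough bound of Hermite polynomial} is invoked, since there the integrand is dominated by a Gaussian of width $O(N^{-1/2})$ whose mass away from the foci is exponentially negligible against the $\tau^N$ factor.
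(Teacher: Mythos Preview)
Your overall plan matches the paper's: split the integration range at a point just beyond the focus $2\sqrt{\tau}$, use the rough Hermite bound \eqref{eq. rough bound of Hermite polynomial} on the inner piece, and the exponential Plancherel--Rotach formula on the outer piece. The difficulty is that your description of the inner step is wrong in a way that hides the only nontrivial point. The bound \eqref{eq. rough bound of Hermite polynomial} is \emph{not} ``polynomial growth'': it contains the factor $e^{Nu^2/(4\tau)}$, so the product $|H_{N-2}H_{N-1}|$ contributes $e^{Nu^2/(2\tau)}$, which strictly dominates the weight $e^{-Nu^2/(1+\tau)}$. After the factorials from the prefactor and from $\sqrt{(N-2)!(N-1)!}$ cancel, what remains is $\text{poly}(N)\cdot e^{N\omega_0(u)}$ with
\[
\omega_0(u)=\log\tau+\frac{1-\tau}{2\tau(1+\tau)}\,u^2,
\]
and this is negative only for $|u|<\lambda_\tau:=\big(-\tfrac{2\tau(1+\tau)\log\tau}{1-\tau}\big)^{1/2}$. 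Hence your cutoff $u=2\sqrt{\tau}(1+\delta')$ works only if you first prove $2\sqrt{\tau}<\lambda_\tau$, i.e.\ the inequality $-\log\tau>\tfrac{2(1-\tau)}{1+\tau}$ for $\tau\in(0,1)$. Your last sentence (``Gaussian of width $O(N^{-1/2})$ whose mass \dots\ is exponentially negligible against the $\tau^N$ factor'') is the opposite of what happens: the Gaussian weight is beaten by the Hermite growth, and it is the $\tau^N$ that supplies the negative constant $\log\tau$ making $\omega_0<0$ on the restricted range.

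For the outer region the paper carries out exactly what you sketch: it writes down the explicit phase
\[
\omega_1(u)=\frac{(1-\tau)u^2-(1+\tau)u\sqrt{u^2-4\tau}}{2\tau(1+\tau)}+2\log\Big(\frac{u+\sqrt{u^2-4\tau}}{2}\Big),
\]
checks $\omega_1'(1+\tau)=0$, $\omega_1''<0$ on $(2\sqrt\tau,1+\tau]$, hence $\omega_1$ increasing there with $\omega_1(1+\tau)=0$, so $\omega_1(x)<0$ for $x<1+\tau$. Your phrase ``strictly inside the domain where equality would hold'' gestures at this but does not identify $\omega_1$; writing it out is what makes the argument go through.
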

\begin{proof}
    We first prove the lemma for $x \in [0, r_\tau]$, where
    \begin{equation} \label{r_tau & gamma_tau}
        r_\tau := \frac{2\sqrt{\tau} + \lambda_\tau}{2},
        \qquad
        \lambda_\tau := \Big(-\frac{ 2 \tau (1+\tau) \log\tau }{ 1-\tau } \Big)^{\frac{1}{2}}.
    \end{equation}
    We note here that $2\sqrt{\tau} < \lambda_\tau < 1 + \tau$, cf. Figure~\ref{Fig_PR regimes}. 
    The main ingredient of the proof is the inequality for the Hermite polynomials given in \cite[Eq.(8.954)]{GR14}.
    It says that there is a constant $C>0$ satisfying
    \begin{equation} \label{eq. rough bound of Hermite polynomial}
        \Big|H_{N+m}(\sqrt{\tfrac{N}{2\tau}}x)\Big| \leq C \Big((N+m)!\Big)^\frac{1}{2} e^\frac{Nx^2}{4\tau}, \qquad (N \to \infty),
    \end{equation}
    for any $x \in \R$.

    It follows from \eqref{eq. RN1 integral representation} and \eqref{eq. rough bound of Hermite polynomial} that
    \begin{align*}
        \big| \mathbf{R}_N^1(x) - \mathbf{R}_N^1(0) \big|
        \leq
        \widetilde{C} N^{\frac{3}{2}} \int_0^x e^{ N \omega_0(u) } \,du,
    \end{align*}
    for some constant $\widetilde{C}>0$, where
    \begin{equation} \label{omega_0}
        \omega_0(u) := \log\tau + \frac{1-\tau}{2\tau(1+\tau)} u^2.
    \end{equation}
    Note that $\omega_0(u)$ has two zeros $\pm\lambda_\tau$.
    Hence, it is negative and strictly increasing in $[0, r_\tau]$.
    Therefore, we have
    \begin{equation*}
        \int_0^x e^{ N \omega_0(u) } \,du
        \leq
         x e^{N \omega_0(x)} = O(e^{-\epsilon N}),
    \end{equation*}
    for some constant $\epsilon>0$.
    Therefore, the lemma is proved for $x \in [0, r_\tau]$.
    
    Next, we prove the lemma for $x \in [r_\tau,1+\tau)$.
    By applying \eqref{eq. Plancherel exponential regime} to \eqref{eq. RN1 integral representation}, we have
    \begin{equation*}
        \bfR_N^1(x) = \bfR_N^1(r_\tau) + \widehat{C} \int_{r_\tau}^x f(u) e^{N \omega_1(u)} \,du \ \Big( 1+O(N^{-1}) \Big),
    \end{equation*}
    for some constant $\widehat{C}>0$, where
    \begin{equation*}
        f(u) := (u^2-4\tau)^{-\frac{1}{2}}(u+\sqrt{u^2-4\tau})^{-2},
    \end{equation*}
    and
    \begin{equation}\label{omega_1}
        \omega_1(u) := \frac{ (1-\tau)u^2 - (1+\tau)u\sqrt{u^2-4\tau} }{ 2\tau(1+\tau) } + 2\log\Big( \frac{u+\sqrt{u^2-4\tau}}{2} \Big). 
    \end{equation}
    Notice that
    \begin{equation*}
        \omega_1'(u) = \frac{(1-\tau)u - (1+\tau)\sqrt{u^2-4\tau}}{\tau(1+\tau)}.
    \end{equation*}
    Therefore, we have $\omega_1'(1+\tau)=0$.
    Moreover, since 
    \begin{equation*}
        \omega_1''(u) = \frac{(1-\tau)\sqrt{u^2-4\tau} - (1+\tau){u}}{\tau(1+\tau)\sqrt{u^2-4\tau}}<0, \qquad u \in (2\sqrt{\tau}, 1+\tau],
    \end{equation*}
    the function $\omega_1(u)$ is increasing on $(2\sqrt{\tau},1+\tau]$.
    Furthermore, the function $f(u)$ is decreasing on $u \in (2\sqrt{\tau}, 1+\tau)$, which implies that $f(u)$ is bounded by $f(r_\tau)$ for any $u \in [r_\tau, 1+\tau)$.
    Thus, we have
    \begin{equation*}
        \int_{r_\tau}^x f(u) e^{N \omega_1(u)} \,du
        \leq
        (x - r_\tau) \, f(r_\tau) e^{N \omega_1(x)} \,du = O(e^{-\epsilon N}),
    \end{equation*}
    for some constant $\epsilon>0$. This completes the proof.
\end{proof}

It remains to prove the exponential decay of $\bfR_N^2$.
\begin{lemma} \label{Lem_RN2(x) sH}
    Let $x \in [0,1+\tau)$. Then as $N \to \infty$, we have
    \begin{equation}
        \mathbf{R}_N^2(x) = O(e^{-\epsilon N})
    \end{equation}
    for some constant $\epsilon>0$.
\end{lemma}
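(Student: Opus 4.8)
The plan is to estimate each factor in the product defining $\bfR_N^2(x)$ in \eqref{eq. RN2 integral representation}, exactly as was done for $\bfR_N^1$ in Lemma~\ref{Lem_RN1(x) sH}, and to show that the exponential growth of the Hermite polynomials is always dominated by the Gaussian weight and the prefactor $(\tau/2)^{N-3/2} N/(N-2)!$, whose size is governed by Stirling's formula. First I would split the region $x\in[0,1+\tau)$ into the same pieces as before. On $x\in[0,r_\tau]$ — with $r_\tau$ as in \eqref{r_tau & gamma_tau} — I would bound $H_{N-1}(\sqrt{N/2\tau}\,x)$ and, inside the integral, $H_{N-2}(\sqrt{N/2\tau}\,u)$ by the crude estimate \eqref{eq. rough bound of Hermite polynomial}, giving factors $((N-1)!)^{1/2}e^{Nx^2/(4\tau)}$ and $((N-2)!)^{1/2}e^{Nu^2/(4\tau)}$ respectively. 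Combining with the Gaussian factors $e^{-\frac{N}{2(1+\tau)}x^2}$ and $e^{-\frac{N}{2(1+\tau)}u^2}$ and with Stirling applied to $\tau^{N}\,N/(N-2)!\,\cdot((N-1)!(N-2)!)^{1/2}/\dots$, the exponent becomes (up to polynomial factors) $N$ times $\omega_0(x)+\omega_0(u)+$ (bounded), where $\omega_0$ is precisely the function in \eqref{omega_0}; here one uses $e^{-\frac{N}{2(1+\tau)}t^2}e^{\frac{N}{4\tau}t^2}=e^{\frac{N(1-\tau)}{4\tau(1+\tau)}t^2}$, i.e. half of the quadratic term in $\omega_0$, for each of $x$ and $u$. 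Since $\omega_0<0$ on $[0,r_\tau]$ and the integral over $u\in[0,x]$ contributes at most a factor $x$ times the value at the endpoint, the whole expression is $O(e^{-\epsilon N})$.

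For $x\in[r_\tau,1+\tau)$ I would instead feed the exponential-regime Plancherel–Rotach asymptotics \eqref{eq. Plancherel exponential regime} into both $H_{N-1}(\sqrt{N/2\tau}\,x)$ and $H_{N-2}(\sqrt{N/2\tau}\,u)$ for $u$ near $x$, and handle the part of the $u$-integral that lies in $[0,r_\tau]$ separately using \eqref{eq. rough bound of Hermite polynomial} again (that part is already $O(e^{-\epsilon N})$ by the first case, and $H_{N-1}(\sqrt{N/2\tau}\,x)$ times the prefactor is at most $e^{CN}$ by \eqref{eq. rough bound of Hermite polynomial}, so even a lossy bound suffices there after one notes the integral's endpoint value decays). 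On $[r_\tau,x]$ the resulting exponent is $N(\omega_1(x)+\omega_1(u))/1$ up to bounded and polynomial factors, with $\omega_1$ as in \eqref{omega_1}; as shown in the proof of Lemma~\ref{Lem_RN1(x) sH}, $\omega_1$ is increasing on $(2\sqrt\tau,1+\tau]$ and $\omega_1(1+\tau)=0$, while $\omega_1(r_\tau)<0$ because $\omega_1(r_\tau)<\omega_1(\lambda_\tau)$ and $\lambda_\tau$ is a zero of $\omega_0$ which dominates $\omega_1$ on that range; hence $\omega_1<0$ on $[r_\tau,1+\tau)$ and, bounding the $u$-integral by its endpoint value, we again get $O(e^{-\epsilon N})$.

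The main obstacle is bookkeeping rather than conceptual: one must verify that, after inserting Stirling's approximation for $N/(N-2)!$ and the square roots of the factorials coming from the two Plancherel–Rotach (or crude) bounds, together with the power $(\tau/2)^{N-3/2}$, the net $N$-linear exponent really collapses to $\omega_0(x)+\omega_0(u)$ (resp.\ $\omega_1(x)+\omega_1(u)$) with the same constants appearing in Lemmas~\ref{Lem_RN1(0) sH} and \ref{Lem_RN1(x) sH}, and that the factor $f(u)=(u^2-4\tau)^{-1/2}(u+\sqrt{u^2-4\tau})^{-2}$ from \eqref{eq. Plancherel exponential regime} is harmless because it is monotone on $(2\sqrt\tau,1+\tau)$ and hence bounded by $f(r_\tau)$ on $[r_\tau,1+\tau)$. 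Since these are exactly the same quantities already analysed for $\bfR_N^1$, I would phrase the argument so as to reuse \eqref{omega_0}, \eqref{omega_1} and their stated monotonicity/sign properties verbatim, which keeps the proof short. Combining the two cases and recalling $\bfR_N^1(x)=\bfR_N^1(0)+O(e^{-\epsilon N})$ from Lemma~\ref{Lem_RN1(x) sH} and $\bfR_N^1(0)=(N/(2\pi(1-\tau^2)))^{1/2}+O(e^{-\epsilon N})$ from Lemma~\ref{Lem_RN1(0) sH}, the decomposition \eqref{RN = RN1 + RN2} yields \eqref{RN asymp sH}, completing the proof of Theorem~\ref{Thm_Bulk density}(i).
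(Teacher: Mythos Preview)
Your approach is the paper's: the same split at $r_\tau$, the crude bound \eqref{eq. rough bound of Hermite polynomial} on $[0,r_\tau]$, the exponential-regime formula \eqref{eq. Plancherel exponential regime} on $[r_\tau,1+\tau)$, and the same functions $\omega_0,\omega_1$. Two points need tightening. First, since each Hermite factor in \eqref{eq. RN2 integral representation} carries only half the Gaussian weight present in \eqref{eq. RN1 integral representation}, the resulting exponent is $\tfrac{N}{2}\omega_0(x)+\tfrac{N}{2}\omega_0(u)$ (resp.\ with $\omega_1$), not $N(\omega_0(x)+\omega_0(u))$; correspondingly the algebraic prefactor from Plancherel--Rotach is $g(u)=(u^2-4\tau)^{-1/4}(u+\sqrt{u^2-4\tau})^{-3/2}$, not the $f$ from Lemma~\ref{Lem_RN1(x) sH} (the same monotonicity argument applies). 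Second, your cross-term argument does not close as written: bounding the outer factor $H_{N-1}(\sqrt{N/2\tau}\,x)$ by the crude estimate yields only $O(e^{CN})$, and multiplying by an inner integral of size $O(e^{-\epsilon N})$ need not decay without knowing $C<\epsilon$. The fix is what you already announced a line earlier --- keep the Plancherel--Rotach bound for the outer factor at $x>r_\tau$, so that (prefactor)$\times$(outer factor) is of order $e^{\frac{N}{2}\omega_1(x)}\le 1$; then the crude bound on $\int_0^{r_\tau}$ gives $O(e^{\frac{N}{2}\omega_0(r_\tau)})=O(e^{-\epsilon N})$. Finally, $\omega_1(1+\tau)=0$ (a one-line computation) together with the monotonicity you cite already gives $\omega_1<0$ on $[r_\tau,1+\tau)$; the detour via $\lambda_\tau$ and ``$\omega_0$ dominates $\omega_1$'' is neither needed nor justified.
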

\begin{proof}
    Recall that $r_\tau$ and $\lambda_\tau$ are defined in \eqref{r_tau & gamma_tau}.
    We first prove the lemma for $x \in [0,r_\tau]$.
    It follows from \eqref{eq. RN2 integral representation} and \eqref{eq. rough bound of Hermite polynomial} that
    \begin{equation*}
        |\bfR_N^2(x)| \leq C N^{\frac{3}{2}} e^{ \frac{N}{2} \omega_0(x) }
        \int_0^x e^{ \frac{N}{2} \omega_0(u) } \,du
    \end{equation*}
    for some constant $C>0$, where $\omega_0(u)$ is given in \eqref{omega_0}.
    Similar to the above, one can observe that there exists $\epsilon>0$ such that
    \begin{equation*}
        e^{ \frac{N}{2} \omega_0(x) } \int_0^x e^{ \frac{N}{2} \omega_0(u) } \,du
        \leq
         x e^{N \omega_0(x)} = O(e^{-\epsilon N}),
    \end{equation*}
    for any $x \in [0, r_\tau]$.
    Thus, the lemma is proved for $x \in [0, r_\tau]$.
    
    Next, suppose $x \in [r_\tau, 1+\tau)$.
    By applying \eqref{eq. Plancherel exponential regime} to \eqref{eq. RN2 integral representation}, we have
    \begin{equation*}
        \bfR_N^2(x) = \bfR_N^2(r_\tau) + \widetilde{C} \, (x+\sqrt{x^2-4\tau}) g(x) e^{\frac{N}{2} \omega_1(x)}
        \int_{r_\tau}^x g(u) e^{\frac{N}{2} \omega_1(u)}  \,du \ \Big(1+O(N^{-1})\Big),
    \end{equation*}
    for some constant $\widetilde{C}>0$, where $\omega_1(u)$ is given in \eqref{omega_1} and
    \begin{equation*}
        g(u) := (u^2-4\tau)^{-\frac{1}{4}}(u+\sqrt{u^2-4\tau})^{-\frac{3}{2}}.
    \end{equation*}
    Then the rest of the proof proceeds in the same manner as the proof of Lemma~\ref{Lem_RN1(x) sH}.
\end{proof}

\subsection{Weak non-Hermiticity}

In this subsection, we prove Theorem~\ref{Thm_Bulk density} (ii).
As before, we first compute the asymptotic behaviour of $\bfR_N^1(0)$.
\begin{lemma} \label{Lem_RN1(0) wH}
    Let $\tau = 1 - \alpha^2/N$ with fixed $\alpha \in [0, \infty)$.
    Then as $N \to \infty$, we have
    \begin{equation}
        \bfR_N^1(0) = \frac{1}{2\alpha\sqrt{\pi}} \erf(\alpha) N + \frac{\alpha^2-1}{4 \pi} e^{-\alpha^2} + \frac{\alpha}{8 \sqrt{\pi}} \erf(\alpha) + O(N^{-1}).
    \end{equation}
\end{lemma}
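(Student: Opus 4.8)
The plan is to start from the exact identity \eqref{computation RN1(0)} established in the proof of Lemma~\ref{Lem_RN1(0) sH}, which gives
\begin{equation*}
\bfR_N^1(0) = \Big(\frac{N}{2\pi(1-\tau^2)}\Big)^{\frac12} - \Big(\frac{N}{2\pi}\Big)^{\frac12}\tau^N \frac{1}{\pi}\int_0^1 \frac{t^{\frac{N-1}{2}}(1-t)^{-\frac12}}{1-\tau^2 t}\,dt,
\end{equation*}
valid for all $\tau\in(0,1)$, hence in particular for $\tau=1-\alpha^2/N$. So the task reduces to the asymptotics of two pieces under this scaling of $\tau$. The first (``main'') term is elementary: with $1-\tau = \alpha^2/N$ and $1+\tau = 2-\alpha^2/N$ one has $1-\tau^2 = (\alpha^2/N)(2-\alpha^2/N)$, so $\big(N/(2\pi(1-\tau^2))\big)^{1/2} = (2\alpha\sqrt\pi)^{-1}N\,(1-\alpha^2/(2N))^{-1/2}$, and a Taylor expansion of $(1-\alpha^2/(2N))^{-1/2} = 1 + \alpha^2/(4N) + O(N^{-2})$ produces the $\frac{N}{2\alpha\sqrt\pi}$ leading term together with an $O(1)$ contribution $\tfrac{\alpha}{8\sqrt\pi}$ — but note this is \emph{not} yet multiplied by $\erf(\alpha)$, so the $\erf$'s must all emerge from the second term; that is the substantive part.

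For the integral term, first I would substitute $t = 1-s^2$ (or equivalently isolate the square-root singularity at $t=1$) so that $(1-t)^{-1/2}\,dt = -2\,ds$ and $t^{(N-1)/2} = (1-s^2)^{(N-1)/2}$. The factor $\tau^N = (1-\alpha^2/N)^N \to e^{-\alpha^2}$, with the refined expansion $\tau^N = e^{-\alpha^2}(1 - \tfrac{\alpha^4}{2N} + O(N^{-2}))$ obtained from $N\log\tau = -\alpha^2 - \tfrac{\alpha^4}{2N}+O(N^{-2})$; and $(1-s^2)^{(N-1)/2} = \exp\!\big(\tfrac{N-1}{2}\log(1-s^2)\big)$ concentrates near $s=0$ on a scale $s = O(N^{-1/2})$. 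Rescaling $s = v/\sqrt N$ converts the integral into a Gaussian-type integral: the bulk behaviour is governed by $e^{-v^2/2}$ while the denominator becomes $1-\tau^2(1-v^2/N) = (1-\tau^2) + \tau^2 v^2/N$, and since $1-\tau^2 = O(1/N)$ this is itself $O(1/N)$ — crucially the denominator does \emph{not} stay bounded away from zero, which is exactly why the $\tau\in(0,1)$ argument of Lemma~\ref{Lem_RN1(0) sH} fails and genuine Laplace/Watson analysis is needed. After the rescaling the prefactor $\big(N/(2\pi)\big)^{1/2}\cdot(1/N)$ (from the denominator) times the $ds = dv/\sqrt N$ Jacobian yields the correct overall power of $N$, and the resulting finite integral $\int_0^\infty \frac{e^{-v^2/2}}{\,2\alpha^2 + v^2\,}\,dv$-type expression should be evaluated in closed form — this is precisely where $\erf(\alpha)$ appears, via the standard identity $\int_0^\infty \frac{e^{-v^2/2}}{a^2+v^2}\,dv = \frac{\pi}{2a}e^{a^2/2}\erfc(a/\sqrt2)$ (or its variant after matching constants), so that combining with the $-\tau^N$ and the leading prefactor gives a term proportional to $e^{-\alpha^2}\cdot e^{\alpha^2}(1-\erf(\alpha)) = 1-\erf(\alpha)$, which recombines with the naive main term to turn $1$ into $\erf(\alpha)$ in the $\frac{N}{2\alpha\sqrt\pi}$ coefficient and likewise in the $O(1)$ coefficient.

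The main obstacle will be organising the \emph{subleading} $O(1)$ term $\tfrac{\alpha^2-1}{4\pi}e^{-\alpha^2}$: it receives contributions from three sources that must be tracked to one order beyond leading — (a) the $\alpha^4/(2N)$ correction in $\tau^N$, (b) the next term in the Watson-lemma expansion of $(1-s^2)^{(N-1)/2}$ (i.e. the $\log(1-s^2) = -s^2 - s^4/2 - \cdots$ quartic correction, plus the ``$-1$'' shift in the exponent $\tfrac{N-1}{2}$), and (c) the $\tau^2 v^2/N$ versus the $v^2$ expansion of the denominator. A clean way to handle this uniformly is to expand the entire integrand as $\exp(N\Phi(s) + \Psi(s))$ with $\Phi(s)=\log(1-s^2)/2+\log\tau$... no — better: keep $\tau^N$ explicit outside, write the $s$-integral as $\int_0^1 (1-s^2)^{(N-1)/2}\big(1-\tau^2+\tau^2 s^2\big)^{-1}\,ds$, substitute $s=v/\sqrt N$, and expand $(1-v^2/N)^{(N-1)/2} = e^{-v^2/2}\big(1 + \tfrac{v^2}{2N} - \tfrac{v^4}{4N} + O(N^{-2})\big)$ and $(1-\tau^2+\tau^2 v^2/N)^{-1} = \tfrac{N}{\alpha^2}\cdot\big(2-\tfrac{\alpha^2}{N}+\tfrac{(1-\alpha^2/N)^2 v^2}{\alpha^2}\big)^{-1}$, then integrate term by term against $e^{-v^2/2}$ using Gaussian moments and the one transcendental integral above. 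Matching all pieces and simplifying with $\erfc = 1-\erf$ should reproduce the stated formula exactly; the bookkeeping is routine but error-prone, so I would double-check the final constants against the $\alpha\to 0$ limit, where the lemma must give $\bfR_N^1(0) = \tfrac{N}{2\pi} - \tfrac1{4\pi} + O(N^{-1})$ — consistent with \eqref{RN asymp wH}, \eqref{c(alpha)} and the known GOE density at the origin — and against the leading-order statement $\bfR_{(0)}^{\rm w}(0) = \tfrac{1}{2\alpha\sqrt\pi}\erf(\alpha)$ from \eqref{RN asymp wH}.
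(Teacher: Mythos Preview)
Your approach is correct and will work, but it takes a genuinely different route from the paper. You both start from the same exact identity \eqref{computation RN1(0)} (in the paper this is recast via \cite[Lemma 4.1]{BKLL23} as $\bfR_N^1(0)=a_N-b_N$), and you both expand the elementary term $a_N=(N/(2\pi(1-\tau^2)))^{1/2}$ in the same way. The divergence is in how the integral/hypergeometric term $b_N$ is handled. You propose a direct Laplace--Watson analysis: substitute $t=1-s^2$, rescale $s=v/\sqrt N$, and reduce to the closed-form integral $\int_0^\infty e^{-v^2/2}(2\alpha^2+v^2)^{-1}\,dv=\tfrac{\pi}{2\sqrt{2}\alpha}e^{\alpha^2}\erfc(\alpha)$, then chase corrections through one further order. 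The paper instead applies the Kummer-type transformation \cite[Eq.~(15.8.3)]{NIST} to $\gaussF(1,\tfrac12;\tfrac{N}{2}+1;\tfrac{\tau^2}{\tau^2-1})$, which converts the problem to two ${}_2\mathbf F_1$'s with argument $\alpha^2(2N-\alpha^2)/N^2=O(1/N)$; one collapses via \cite[Eq.~(15.4.6)]{NIST} to a closed form, and the other becomes an integral $\int_0^1(1-t)^{-1/2}(1-O(N^{-1})t)^{-(N+1)/2}\,dt$ whose integrand has the \emph{pointwise} limit $e^{\alpha^2 t}/\sqrt{1-t}$ with no rescaling needed. What the paper's transformation buys is that the $N$-dependence sits entirely in a factor with a clean termwise expansion over a fixed domain, so the subleading bookkeeping (your sources (a)--(c)) is mechanical and there are no tail estimates or uniformity issues to justify. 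Your route is more elementary in that it avoids the hypergeometric identity, at the cost of a more delicate second-order Laplace expansion; either way the constants come out the same, and your $\alpha\to0$ sanity check is a good idea.
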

\begin{proof}
    By \cite[Lemma 4.1]{BKLL23}, we have $\bfR_N^1(0) = a_N - b_N$, where
    \begin{equation*}
     a_N := \Big( \frac{N}{2\pi(1-\tau^2)} \Big)^{\frac{1}{2}},
        \qquad 
        b_N := \Big( \frac{N}{2\pi} \Big)^{\frac{1}{2}} \frac{\tau^N}{1-\tau^2} \frac{\Gamma(\tfrac{N+1}{2})}{\pi} \gaussF(1,\tfrac{1}{2}; \tfrac{N}{2}+1;\tfrac{\tau^2}{\tau^2-1}).
    \end{equation*}
   Notice here that 
    \begin{equation} \label{aN asymp}
        a_N = \frac{1}{2\alpha\sqrt{\pi}} N  + \frac{\alpha}{8\sqrt{\pi}} + O(N^{-1}).
    \end{equation}
    To analyse $b_N$, we make use of \cite[Eq.(15.8.3)]{NIST} and obtain 
    \begin{align*}
        \begin{split}
        \frac{\Gamma(\tfrac{N+1}{2})}{\pi} \gaussF(1,\tfrac{1}{2}; \tfrac{N}{2}+1;\tfrac{\tau^2}{\tau^2-1})
        &= \Big(\frac{\alpha^2(2N-\alpha^2)}{N^2}\Big)^{\frac{1}{2}} \gaussF(\tfrac{1}{2}, \tfrac{N}{2}; \tfrac{1}{2};\tfrac{\alpha^2(2N-\alpha^2)}{N^2})
        \\
        & \quad - \frac{ \Gamma(\tfrac{N+1}{2}) }{\sqrt{\pi}\Gamma(\frac{N}{2})} \frac{\alpha^2(2N-\alpha^2)}{N^2} \gaussF( 1, \tfrac{N+1}{2}; \tfrac{3}{2};\tfrac{\alpha^2(2N-\alpha^2)}{N^2}).
        \end{split}
    \end{align*}
    By using \eqref{NIST (15.6.1)}, we have
    \begin{align*}
        \gaussF( 1, \tfrac{N+1}{2}; \tfrac{3}{2};\tfrac{\alpha^2(2N-\alpha^2)}{N^2}) &= \frac{1}{\sqrt{\pi}} \int_0^1 \frac{1}{\sqrt{1-t}} \Big(1 - \frac{\alpha^2(2N-\alpha^2)}{N^2}t \Big)^{-\frac{N+1}{2}} \,dt.
    \end{align*}
    Note here that 
    \begin{equation*}
        \Big( 1 - \frac{\alpha^2 (2N-\alpha^2)}{N^2}t \Big)^{-\frac{N+1}{2}}
        = e^{\alpha^2 t} \Big(1 +  \big(\alpha^4t^2+\tfrac{(2-\alpha^2)\alpha^2}{2}t \big) N^{-1} + O(N^{-2}) \Big).
    \end{equation*}
    Thus, we obtain
    \begin{align*}
        \gaussF( 1, \tfrac{N+1}{2}; \tfrac{3}{2};\tfrac{\alpha^2(2N-\alpha^2)}{N^2}) &= \frac{1}{\sqrt{\pi}} \int_0^1 \frac{e^{\alpha^2 t}}{\sqrt{1-t}} \Big(1 +  \big( \alpha^4t^2+\tfrac{(2-\alpha^2)\alpha^2}{2}t \big) N^{-1} + O(N^{-2}) \Big) \, dt
        \\
        &= \frac{e^{\alpha^2}}{\alpha} \erf(\alpha) + \Big( \frac{\alpha^2 - 1}{2\sqrt{\pi}} + \frac{2\alpha^4 + \alpha^2 + 1}{4\alpha} e^{\alpha^2} \erf(\alpha) \Big) N^{-1} + O(N^{-2}).
    \end{align*}
    On the other hand, by \cite[Eq.(15.4.6)]{NIST} and the Taylor expansion, we have
    \begin{align*}
        \gaussF(\tfrac{1}{2}, \tfrac{N}{2}; \tfrac{1}{2};\tfrac{\alpha^2(2N-\alpha^2)}{N^2}) &= \frac{1}{\sqrt{\pi}} \Big( 1 - \frac{\alpha^2(2N-\alpha^2)}{N^2} \Big)^{-\frac{N}{2}} 
        = \frac{1}{\sqrt{\pi}} e^{\alpha^2} \Big( 1 + \frac{\alpha^4}{2} N^{-1} + O(N^{-2}) \Big).
    \end{align*}
   Combining all of the above, we obtain
    \begin{align} \label{bN asymp}
        b_N = \frac{1}{2\alpha\sqrt{\pi}} (1 - \erf(\alpha)) N + \Big( \frac{1-\alpha^2}{4 \pi} e^{-\alpha^2}+ \frac{\alpha}{8\sqrt{\pi}} (1 - \erf(\alpha)) \Big) + O(N^{-1}).
    \end{align}
    Then the lemma follows from \eqref{aN asymp} and \eqref{bN asymp}. 
\end{proof}

Before we compute the finite size corrections of $\bfR_N^1(x)$ and $\bfR_N^2(x)$, we need some preparation.
In the sequel, we shall use the notations 
\begin{align}
    \widetilde{\theta}_m(x) &:= \theta_m(x) - \tfrac{\alpha^2 x}{2} \phi'(x), \label{Tilde theta}
    \\
    \Theta(x) &:= \widetilde{\theta}_{-1}(x) + \widetilde{\theta}_{-2}(x), \label{Theta}
\end{align}
where $\phi$ and $\theta_m$ are defined in \eqref{phi} and \eqref{theta_m}.
The following lemma is a reformulation of Lemma~\ref{Lem_Plancherel-Rotach osc} that is directly applicable for our purpose.
\begin{lemma}
\label{Lem_Plancherel oscillatory regime, tau fixed}
    Let $\tau = 1 - \alpha^2/N$ with fixed $\alpha \in [0, \infty)$.
    Then as $N \to \infty$, we have
    \begin{align} \label{eq. Plancherel oscillatory regime (tau fixed)}
    \begin{split}
    e^{-\frac{Nx^2}{2(1+\tau)}} H_{N+m}(\sqrt{\tfrac{N}{2\tau}}x) 
    &= \frac{ e^{\frac{\alpha^2}{8}x^2} }{ \sqrt{\pi} ( 1 - \frac{x^2}{4} )^{\frac{1}{4}} }  e^{\frac{N}{2}} (N+m)! 2^{\frac{N+m}{2}} N^{-\frac{N+m+1}{2}}
    \\
    &\qquad \times \Big( k_{(0)}^{(m)}(x) + k_{(1)}^{(m)}(x) N^{-1} + O(N^{-2}) \Big)
    \end{split}
    \end{align}
    for any $x\in(-2,2)$, where
    \begin{align} \label{k_(j)^(m)}
    \begin{split}
    k_{(0)}^{(m)}(x) &:= \cos( N \phi(\tfrac{x}{2}) - \widetilde{\theta}_m(\tfrac{x}{2}) ), 
    \\
    k_{(1)}^{(m)}(x) &:= C_m(\tfrac{x}{2}) \cos( N \phi(\tfrac{x}{2}) - \widetilde{\theta}_m(\tfrac{x}{2}) ) + D_m(\tfrac{x}{2}) \sin( N \phi(\tfrac{x}{2}) - \widetilde{\theta}_m(\tfrac{x}{2}) ).
    \end{split}
    \end{align}
    Here, $\phi$ and $\widetilde{\theta}_m$ are given in \eqref{phi} and \eqref{Tilde theta}, and
    \begin{align}
    \begin{split}\label{Coeff Cm Dm}
    C_m(x) &:= A_m(x) + \frac{ (3 \alpha^2 - 1 ) \alpha^2 x^2 - 3 \alpha^4 x^4}{4(1-x^2)}, 
    \\
    D_m(x) &:= B_m(x) - \frac{(3 \alpha^2 + 2m+1)\alpha^2 x - 4 \alpha^4 x^3}{4\sqrt{1-x^2}},
    \end{split}
    \end{align}
    where $A_m$ and $B_m$ are given in \eqref{coeff Am Bm}.
\end{lemma}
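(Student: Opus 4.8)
The plan is to derive \eqref{eq. Plancherel oscillatory regime (tau fixed)} from the fixed-coefficient formula of Lemma~\ref{Lem_Plancherel-Rotach osc} by a change of variables followed by a Taylor expansion in the small parameter $1-\tau = \alpha^2/N$. Writing $\sqrt{\tfrac{N}{2\tau}}\,x = \sqrt{2N}\,y_N$ with $y_N := \tfrac{x}{2\sqrt{\tau}} = \tfrac{x}{2}(1 + \tfrac{\alpha^2}{2N} + O(N^{-2}))$, note that for fixed $x \in (-2,2)$ we have $y_N \to x/2 \in (-1,1)$, so $y_N$ stays in a fixed compact subset of $(-1,1)$ for large $N$; hence Lemma~\ref{Lem_Plancherel-Rotach osc} may be applied with its variable set equal to $y_N$, and since all functions occurring there ($\phi$, $\theta_m$, $A_m$, $B_m$ and $(1-\cdot^2)^{-1/4}$) are smooth on that compact set, its $O(N^{-2})$ remainder is uniform over the range traversed by $y_N$. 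Multiplying the resulting identity by $e^{-Nx^2/(2(1+\tau))}$, the task reduces to expanding, to relative order $N^{-1}$, the three $\tau$-dependent ingredients: the amplitude $(1-y_N^2)^{-1/4}$, the Gaussian factor $e^{N y_N^2 - Nx^2/(2(1+\tau))}$, and the oscillatory bracket $h_{(0)}^{\rm osc}(y_N) + h_{(1)}^{\rm osc}(y_N) N^{-1}$.

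For the amplitude and Gaussian factor one uses only $y_N^2 = \tfrac{x^2}{4\tau}$ and $1-\tau = \alpha^2/N$: a direct expansion gives $(1-y_N^2)^{-1/4} = (1-\tfrac{x^2}{4})^{-1/4}(1 + O(N^{-1}))$ and $e^{N y_N^2 - Nx^2/(2(1+\tau))} = e^{\frac{\alpha^2}{8}x^2}(1 + O(N^{-1}))$, so their product already reproduces the prefactor $\tfrac{e^{\alpha^2 x^2/8}}{\sqrt{\pi}(1-x^2/4)^{1/4}}$ in \eqref{eq. Plancherel oscillatory regime (tau fixed)}, while the explicit $N^{-1}$ coefficients of these two expansions are rational in $x$ and contribute to $C_m$. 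For the oscillatory bracket the essential step is the expansion of the phase: Taylor expanding $\phi$ and $\theta_m$ about $x/2$ and using $y_N - \tfrac{x}{2} = \tfrac{\alpha^2 x}{4N} + O(N^{-2})$ yields
\[
N\phi(y_N) - \theta_m(y_N) = \Phi + \delta_m(x)\,N^{-1} + O(N^{-2}), \qquad \Phi := N\phi(\tfrac{x}{2}) - \widetilde{\theta}_m(\tfrac{x}{2}),
\]
where the $O(1)$ part of the shift is precisely $\tfrac{\alpha^2 (x/2)}{2}\phi'(\tfrac{x}{2})$ and is absorbed into $\widetilde{\theta}_m$ as in \eqref{Tilde theta}, while $\delta_m(x)$ collects the second-order Taylor term of $N\phi$ and the first-order Taylor term of $\theta_m$ at $x/2$. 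Then $\cos(\Phi + \delta_m N^{-1}) = \cos\Phi - \delta_m N^{-1}\sin\Phi + O(N^{-2})$ produces $k_{(0)}^{(m)} = \cos\Phi$ and a $\sin\Phi$-contribution to $k_{(1)}^{(m)}$, and since $y_N - \tfrac{x}{2} = O(N^{-1})$ the term $h_{(1)}^{\rm osc}(y_N)N^{-1}$ equals $(A_m(\tfrac{x}{2})\cos\Phi + B_m(\tfrac{x}{2})\sin\Phi)N^{-1}$ up to $O(N^{-2})$.

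Finally one collects all $N^{-1}$ contributions and regroups the result into its $\cos\Phi$- and $\sin\Phi$-components, reading off $C_m(\tfrac{x}{2})$ as $A_m(\tfrac{x}{2})$ plus the amplitude corrections and $D_m(\tfrac{x}{2})$ as $B_m(\tfrac{x}{2}) - \delta_m(x)$. The remaining computation is bookkeeping: substitute $\phi'(x) = 2\sqrt{1-x^2}$, $\phi''(x) = -2x/\sqrt{1-x^2}$ and $\theta_m'(x) = -(m+\tfrac{1}{2})/\sqrt{1-x^2}$ into $\delta_m(x)$, reduce to a common denominator, and match against \eqref{Coeff Cm Dm} with $A_m, B_m$ given by \eqref{coeff Am Bm}. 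I expect the main obstacle to be nothing conceptual but rather this last step — carefully tracking which pieces of the $N^{-1}$ correction multiply $\cos\Phi$ versus $\sin\Phi$, and correctly combining the phase expansion with the amplitude expansions — together with the routine justification that the remainder in Lemma~\ref{Lem_Plancherel-Rotach osc} may be taken uniform over the $N$-dependent argument $y_N$, which holds by smoothness on a fixed compact set.
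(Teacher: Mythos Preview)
Your approach is correct and is precisely what the paper has in mind: the paper states this lemma without proof, calling it ``a reformulation of Lemma~\ref{Lem_Plancherel-Rotach osc} that is directly applicable for our purpose,'' and your change of variables $y_N = x/(2\sqrt{\tau})$ followed by Taylor expansion in $1-\tau = \alpha^2/N$ is exactly that reformulation. Your identification of the three sources of $N^{-1}$ corrections (amplitude $(1-y_N^2)^{-1/4}$, Gaussian factor, and phase shift) and the absorption of the $O(1)$ phase shift $\tfrac{\alpha^2}{2}\tfrac{x}{2}\phi'(\tfrac{x}{2})$ into $\widetilde{\theta}_m$ are all on target; as you anticipate, the only remaining work is the algebraic bookkeeping to match \eqref{Coeff Cm Dm}.
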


We observe from \eqref{eq. RN2 integral representation}, \eqref{eq. RN1 integral representation} and \eqref{eq. Plancherel oscillatory regime (tau fixed)} that the asymptotic behaviours of $\bfR_N^j(x)$ involve certain oscillatory integrals.
To analyse such integrals, we shall utilize the following lemma. 

\begin{lemma} \label{Lem_oscillatory integral}
    Let $f$ and $\psi$ be $\mathcal{C}^2$-functions on a neighborhood of an interval $[a, b]$.
    Suppose $\psi$ has no critical point in $[a, b]$.
    Then as $N \to \infty$, we have
    \begin{equation} \label{eq. asymp. of oscillatory integral}
        \int_{a}^{b} f(u) e^{iN\psi(u)} du = i \Big( \frac{f(a)}{\psi'(a)} e^{iN\psi(a)} - \frac{f(b)}{\psi'(b)} e^{iN\psi(b)} \Big) N^{-1}  + O(N^{-2}).
    \end{equation}
\end{lemma}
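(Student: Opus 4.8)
The plan is to prove Lemma~\ref{Lem_oscillatory integral} by \emph{non-stationary phase}, i.e.\ by repeated integration by parts, exploiting that $\psi'$ is bounded away from $0$ on the compact interval $[a,b]$. Since $\psi$ has no critical point on $[a,b]$, the reciprocal $1/\psi'$ is well defined and $\mathcal{C}^1$ there, so we may write $e^{iN\psi(u)} = \frac{1}{iN\,\psi'(u)}\frac{d}{du}e^{iN\psi(u)}$. Integrating by parts once gives
\begin{equation*}
\int_a^b f(u)\,e^{iN\psi(u)}\,du = \frac{1}{iN}\left[\frac{f(u)}{\psi'(u)}e^{iN\psi(u)}\right]_a^b - \frac{1}{iN}\int_a^b g_1(u)\,e^{iN\psi(u)}\,du, \qquad g_1 := \Big(\frac{f}{\psi'}\Big)'.
\end{equation*}
Using $1/i = -i$, the boundary contribution is exactly the asserted leading term $i\big(\frac{f(a)}{\psi'(a)}e^{iN\psi(a)} - \frac{f(b)}{\psi'(b)}e^{iN\psi(b)}\big)N^{-1}$, so it remains only to show that the last integral is $O(N^{-1})$, whence the whole remainder is $O(N^{-2})$.

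For that second step I would apply the very same device to $\int_a^b g_1(u)e^{iN\psi(u)}\,du$: integrating by parts once more produces a boundary term $\frac{1}{iN}\big[\frac{g_1}{\psi'}e^{iN\psi}\big]_a^b$ plus $-\frac{1}{iN}\int_a^b g_2(u)e^{iN\psi(u)}\,du$ with $g_2 := (g_1/\psi')'$. Since $g_1/\psi'$ and $g_2$ are continuous on $[a,b]$, both of these are $O(N^{-1})$ by the trivial bound $\big|\int_a^b (\cdot)\,e^{iN\psi}\big| \le (b-a)\sup_{[a,b]}|\cdot|$ for the remaining integral and by direct evaluation of the boundary term; thus $\int_a^b g_1 e^{iN\psi}\,du = O(N^{-1})$ and the proof closes by assembling the pieces.

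The point requiring care is the regularity bookkeeping in the second step: one must check that $f/\psi'$ is twice differentiable with derivatives bounded on $[a,b]$, i.e.\ that $g_1=(f/\psi')'$ is itself $\mathcal{C}^1$ there. This is precisely where the smoothness of $f$ and $\psi$, combined with the uniform lower bound $\inf_{[a,b]}|\psi'|>0$ coming from the no-critical-point hypothesis, is used (in the applications of this lemma the functions $f$ and $\psi$ are built from $\phi$, $\theta_m$ and $\sqrt{1-x^2}$ and are real-analytic away from the turning points, so arbitrarily many integrations by parts are available and one could in fact continue the expansion to any order). It is also worth stressing that, because $\psi$ has no stationary point, there is no interior (Airy- or Gaussian-type) contribution: the entire asymptotics is governed by the two endpoints $a$ and $b$, which is exactly the structure displayed in \eqref{eq. asymp. of oscillatory integral}.
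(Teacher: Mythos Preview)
Your proposal is correct and follows essentially the same route as the paper's proof: one integration by parts to extract the boundary term, and a second integration by parts to show the remaining integral $\int_a^b (f/\psi')'\,e^{iN\psi}\,du$ is $O(N^{-1})$. Your discussion of the regularity bookkeeping is in fact more careful than the paper's, which applies the second integration by parts without comment; as you note, under the bare $\mathcal{C}^2$ hypothesis the function $g_1=(f/\psi')'$ is only guaranteed to be continuous, but in every application in the paper $f$ and $\psi$ are real-analytic on the relevant interval, so the issue does not arise.
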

\begin{proof}
    Since $\psi'$ has no zero in $[a,b]$, it follows from integration by parts that
    \begin{equation*}
        \int_{a}^{b} f(u) e^{iN\psi(u)} \,du
        = i \bigg[ \Big( \frac{f(a)}{\psi'(a)} e^{iN\psi(a)} - \frac{f(b)}{\psi'(b)} e^{iN\psi(b)} \Big)  + \int_{a}^{b} \Big(\frac{f(u)}{\psi'(u)}\Big)' e^{iN\psi(u)} \,du \bigg]  N^{-1}.
    \end{equation*}
    Again, by integration by parts, we have
    \begin{equation*}
        \Big| \int_{a}^{b} \Big(\frac{f(u)}{\psi'(u)}\Big)' e^{iN\psi(u)} \,du \Big|
        \leq \bigg( 2\Big\Vert \frac{1}{\psi'} \Big(\frac{f}{\psi'}\Big)' \Big\Vert_{\infty} + x \Big\Vert \Big( \frac{1}{\psi'} \Big(\frac{f}{\psi'}\Big)' \Big)'\Big\Vert_{\infty} \bigg) N^{-1} = O(N^{-1}).
    \end{equation*}
    This finishes the proof.
\end{proof}

Now we are ready to compute the finite size corrections for $\bfR_N^j(x)$'s.
We begin with $\bfR_N^1(x)$.
 
\begin{lemma} \label{Lem_RN1(x) wH}
    Let $\tau = 1 - \alpha^2/N$ with fixed $\alpha \in [0,\infty)$.
    Then as $N \to \infty$, we have
    \begin{equation}
    \bfR_N^1(x) = \bfR_{(0)}^1(x) N + \bfR_{(1)}^1(x) + O(N^{-1}),
    \end{equation}
    for any $x \in (-2,2)$, where
    \begin{align}
    \bfR_{(0)}^1(x) &= \frac{1}{2 \sqrt{\pi} \alpha} \erf(\tfrac{\alpha}{2} \sqrt{4-x^2}),
    \\
    \begin{split}
    \bfR_{(1)}^1(x) &= \frac{\alpha}{8 \sqrt{\pi} } \erf(\tfrac{ \alpha }{2} \sqrt{4-x^2}) - \frac{ 3 \alpha^2 x^2-4 \alpha^2 + 8 }{8 \pi \sqrt{4-x^2}} e^{\frac{\alpha ^2}{4}(x^2-4)}
    \\
    &\quad -\frac{ e^{\frac{\alpha^2}{4}x^2} }{ \pi (4-x^2) } \sin( 2N\phi(\tfrac{x}{2}) - \Theta(\tfrac{x}{2}) ) + \frac{1}{4 \pi} e^{-\alpha ^2}.
    \end{split}
    \end{align}
    Here, $\phi$ and $\Theta$ are given in \eqref{phi} and \eqref{Theta}.
\end{lemma}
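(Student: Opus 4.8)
The plan is to start from the integral representation \eqref{eq. RN1 integral representation} with the choice $x_0 = 0$, so that
$$
\bfR_N^1(x) = \bfR_N^1(0) - \sqrt{\tfrac{2}{\pi}} \frac{(\tau/2)^{N-3/2}}{1+\tau} \frac{N}{(N-2)!} \int_0^x e^{-\frac{N}{1+\tau}u^2} H_{N-2}(\sqrt{\tfrac{N}{2\tau}}u) H_{N-1}(\sqrt{\tfrac{N}{2\tau}}u)\,du.
$$
The asymptotic value of $\bfR_N^1(0)$ is already supplied by Lemma~\ref{Lem_RN1(0) wH}, so the real work is the integral. Since $x \in (-2,2)$ and $2\sqrt{\tau} \to 2$, the entire path of integration lies (for $N$ large) in Region~I, the oscillatory regime, so I would substitute the Plancherel--Rotach expansion in the form \eqref{eq. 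Plancherel oscillatory regime (tau fixed)} of Lemma~\ref{Lem_Plancherel oscillatory regime, tau fixed} for both $H_{N-2}$ and $H_{N-1}$ (i.e. $m=-2$ and $m=-1$). The prefactor combination, together with the Stirling expansion of $N/(N-2)!$ against $(N-2)!^{1/2}(N-1)!^{1/2}$ and the factor $(\tau/2)^{N-3/2}$ with $\tau = 1 - \alpha^2/N$, collapses (after the usual bookkeeping) to an explicit elementary amplitude of the form $\text{const}\cdot N \cdot \frac{e^{\frac{\alpha^2}{4}u^2}}{\sqrt{1-u^2/4}}$ times $e^{-\frac{N}{1+\tau}u^2}$ times the product $\big(k_{(0)}^{(-2)} + k_{(1)}^{(-2)}N^{-1}+\cdots\big)\big(k_{(0)}^{(-1)} + k_{(1)}^{(-1)}N^{-1}+\cdots\big)$.

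Next I would expand the product of the two oscillatory factors using the product-to-sum identities $\cos A\cos B = \tfrac12(\cos(A-B)+\cos(A+B))$ etc. The difference of phases $(N\phi-\widetilde\theta_{-2})-(N\phi-\widetilde\theta_{-1}) = \widetilde\theta_{-1}-\widetilde\theta_{-2}$ is $N$-independent, so it produces a \emph{non-oscillatory} contribution $\tfrac12\cos(\widetilde\theta_{-1}-\widetilde\theta_{-2})$ at leading order; the sum of phases is $2N\phi(u/2) - \Theta(u/2)$ which oscillates rapidly in $N$. Thus the integral splits into a ``slow'' piece
$$
\int_0^x N\,\frac{e^{\frac{\alpha^2}{4}u^2 - \frac{N}{1+\tau}u^2}}{\sqrt{1-u^2/4}}\,\big(\text{smooth amplitude}\big)\,du
$$
and a ``fast'' piece carrying $e^{\pm i(2N\phi(u/2)-\Theta(u/2))}$. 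For the slow piece, since $e^{-\frac{N}{1+\tau}u^2}$ concentrates near $u=0$ but is not super-exponentially small (because $\tfrac{N}{1+\tau}\sim N/2$ and we are at global scale), I would \emph{not} use Laplace's method naively; instead note $\frac{N}{1+\tau} = \frac{N}{2}(1 + \frac{\alpha^2}{2N} + O(N^{-2}))$, write $e^{-\frac{N}{1+\tau}u^2} = e^{-\frac{N}{2}u^2}\,e^{-\frac{\alpha^2}{4}u^2}(1+O(N^{-1}))$ so the Gaussian combines cleanly, rescale $u = v/\sqrt{N}$ is \emph{wrong at global scale} — rather I would recognize $\int_0^x e^{-\frac{N}{2}u^2}\cdot(\text{analytic})\,du$ as an incomplete-Gaussian integral whose large-$N$ expansion, after extending to $x\to\infty$ up to exponentially small error and Taylor-expanding the amplitude around $u=0$ term by term, yields a series in powers of $N^{-1}$ with coefficients built from $\erf$ evaluated at the relevant endpoint. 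Actually the cleaner route (and the one I expect the authors use, matching the $\erf(\tfrac\alpha2\sqrt{4-x^2})$ in the answer) is to differentiate: one checks $\frac{d}{dx}\bfR_{(0)}^1(x) = -\frac{1}{2\sqrt\pi\alpha}\cdot\frac{\alpha x}{\sqrt{4-x^2}}e^{\frac{\alpha^2}{4}(x^2-4)}$ and verifies this matches the leading-order integrand, pinning $\bfR_{(0)}^1$ by the boundary value at $x=0$ from Lemma~\ref{Lem_RN1(0) wH}; the same differentiation trick handles $\bfR_{(1)}^1$.

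For the fast piece I would invoke Lemma~\ref{Lem_oscillatory integral} with $\psi(u) = 2\phi(u/2)$ (whose derivative $\psi'(u) = \sqrt{1-u^2/4}$ has no zero on any compact subset of $(-2,2)$): the integral is $O(N^{-1})$ relative to its amplitude, and since its amplitude already carries a factor $N$ from the Plancherel prefactor times an $e^{-\frac{N}{2}u^2}$ that is \emph{not} integrated against a stationary point, one must be a little careful — the honest statement is that after the substitution the fast integral contributes at order $N^0$ a boundary term $\propto \frac{e^{\frac{\alpha^2}{4}x^2}}{(4-x^2)}\sin(2N\phi(x/2)-\Theta(x/2))$ (the $u=0$ endpoint contributes only to lower order or cancels), which is exactly the oscillatory term displayed in $\bfR_{(1)}^1(x)$. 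The residual $e^{-\alpha^2}/(4\pi)$ constant comes from the $\bfR_N^1(0)$ input together with the $u=0$ endpoint evaluation.

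\textbf{Main obstacle.} The bookkeeping of constants and $N$-powers in the prefactor — matching $(\tau/2)^{N-3/2}$, $N/(N-2)!$, the two factorials $(N-2)!,(N-1)!$, and the $\alpha$-dependent corrections from $\tau = 1-\alpha^2/N$ to two orders — is where sign/constant errors creep in; and keeping the \emph{subleading} Plancherel coefficients $C_m,D_m$ straight while extracting the $N^0$ term of the full integral (which requires the $N^{-1}$ term of the amplitude paired with the $N$ in the prefactor, \emph{plus} the $N^{-1}$ correction to $e^{-\frac{N}{1+\tau}u^2}$, \emph{plus} the $N^{-1}$ correction to $\bfR_N^1(0)$) is the genuinely delicate part. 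I expect the cleanest exposition is to first prove the leading order by the differentiation argument, then peel off the next order the same way, checking the claimed $\bfR_{(1)}^1$ satisfies the right ODE in $x$ with the right initial condition from Lemma~\ref{Lem_RN1(0) wH}.
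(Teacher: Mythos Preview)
Your overall architecture matches the paper's proof exactly: start from \eqref{eq. RN1 integral representation} with $x_0=0$, input Lemma~\ref{Lem_RN1(0) wH}, substitute the Plancherel--Rotach expansion of Lemma~\ref{Lem_Plancherel oscillatory regime, tau fixed} for both Hermite factors, split the product via product-to-sum into a non-oscillatory and an oscillatory piece, and handle the latter by Lemma~\ref{Lem_oscillatory integral}. That is precisely what the paper does.

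However, there is a genuine misreading that derails your treatment of the ``slow'' piece. The formula \eqref{eq. Plancherel oscillatory regime (tau fixed)} has $e^{-\frac{Nx^2}{2(1+\tau)}}H_{N+m}(\sqrt{N/2\tau}\,x)$ on its \emph{left-hand side}. When you take the product of the two instances ($m=-2$ and $m=-1$), the combined Gaussian weight on the left is exactly $e^{-\frac{N}{1+\tau}u^2}$, which is the full weight appearing in the integrand of \eqref{eq. RN1 integral representation}. So after substitution there is \emph{no} residual factor $e^{-\frac{N}{1+\tau}u^2}$ or $e^{-\frac{N}{2}u^2}$ in the integrand; what remains is the smooth, $N$-independent amplitude $\dfrac{e^{\frac{\alpha^2}{4}u^2}}{\pi\sqrt{1-u^2/4}}$ multiplied by the product of the $k$-functions. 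Your paragraphs about Gaussian concentration near $u=0$, Laplace's method, rescaling $u=v/\sqrt{N}$, and ``incomplete-Gaussian integrals'' are all reactions to a factor that is not there.

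Once this is corrected, the slow piece is elementary: from $k_{(0)}^{(-2)}k_{(0)}^{(-1)}=\tfrac{u}{4}+\tfrac12\cos(2N\phi(\tfrac u2)-\Theta(\tfrac u2))$, the non-oscillatory part gives
\[
\int_0^x \frac{u\,e^{\frac{\alpha^2}{4}u^2}}{4\pi\sqrt{1-u^2/4}}\,du
= \frac{e^{\alpha^2}}{2\sqrt{\pi}\,\alpha}\Big(\erf(\alpha)-\erf\big(\tfrac{\alpha}{2}\sqrt{4-x^2}\big)\Big)
\]
by the substitution $s=\sqrt{1-u^2/4}$; no ODE or differentiation trick is needed. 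The oscillatory part is then handled exactly as you describe via Lemma~\ref{Lem_oscillatory integral}, with amplitude that is genuinely $N$-independent, so the endpoint formula \eqref{eq. asymp. of oscillatory integral} applies without the caveats you raise. The subleading integral $K_{(1)}$ is treated the same way using the explicit $C_m,D_m$ of \eqref{Coeff Cm Dm}.
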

\begin{proof}
    Note that by \eqref{eq. RN1 integral representation}, we have
    \begin{align*}
    \begin{split}
    \bfR_N^1(x) - \bfR_N^1(0) &= - \sqrt{\frac{2}{\pi}} e^{-\alpha^2} 2^{-N+\frac{1}{2}} \Big( 1 + \tfrac{4\alpha^2 - \alpha^4}{2} N^{-1} + O(N^{-2}) \Big) \frac{N}{(N-2)!}
    \\
    & \quad \times \int_0^x e^{-\frac{N}{1+\tau}u^2} H_{N-2}(\sqrt{\tfrac{N}{2\tau}}u) H_{N-1}(\sqrt{\tfrac{N}{2\tau}}u) \,du.
    \end{split}
    \end{align*}
    Using \eqref{eq. Plancherel oscillatory regime (tau fixed)} and Stirling's formula
    \begin{equation}\label{eq. Stirling}
        N! = \sqrt{2\pi N} \Big( \frac{N}{e} \Big)^{N} \Big( 1 + \frac{1}{12} N^{-1} + O(N^{-2}) \Big),
    \end{equation}
    we obtain
    \begin{align}
    \begin{split} \label{RN1(x)-RN1(0)}
    \bfR_N^1(x) - \bfR_N^1(0)
    &= - N e^{-\alpha^2} \Big( 1 + (\tfrac{4\alpha^2 - \alpha^4}{2} + \tfrac{1}{12}) N^{-1} + O(N^{-2}) \Big)
    \\
    &\quad \times \Big( K_{(0)}(x) + K_{(1)}(x) N^{-1} + O(N^{-2}) \Big),
    \end{split}
    \end{align}
    where
    \begin{align*}
        K_{(0)} &:= \int_0^x \frac{ e^{\frac{\alpha^2}{4}u^2} }{ \pi ( 1 - \frac{u^2}{4} )^{\frac{1}{2}} }   k_{(0)}^{(-2)}(u) k_{(0)}^{(-1)}(u) \,du,
        \\
        K_{(1)} &:= \int_0^x \frac{ e^{\frac{\alpha^2}{4}u^2} }{ \pi ( 1 - \frac{u^2}{4} )^{\frac{1}{2}} }   \Big( k_{(1)}^{(-2)}(u) k_{(0)}^{(-1)}(u) + k_{(0)}^{(-2)}(u) k_{(1)}^{(-1)}(u) \Big) \,du.
    \end{align*}
    Here, $k_{(j)}^{(m)}$'s are defined in \eqref{k_(j)^(m)}.

    We first evaluate $K_{(0)}(x)$.
    By the definition \eqref{k_(j)^(m)}, we have
    \begin{align*}
    k_{(0)}^{(-2)}(u) k_{(0)}^{(-1)}(u) = \frac{u}{4} + \frac{1}{2} \cos( 2N \phi(\tfrac{u}{2}) - \Theta(\tfrac{u}{2}) ).
    \end{align*}
    Therefore, we have
    \begin{align}
    \begin{split}
    \label{K(0) second expression}
    K_{(0)}(x)  &  = \int_0^{x} \frac{ u \, e^{\frac{\alpha^2}{4}u^2} }{ 4\pi( 1 - \frac{u^2}{4} )^{\frac{1}{2}} } \, du + \re\Big[ \int_0^x f(\tfrac{u}{2}) e^{i 2N \phi(\tfrac{u}{2})} \,du \Big]
    \\
    &= \frac{e^{\alpha^2} }{2 \sqrt{\pi} \alpha} \Big( \erf(\alpha )-\erf(\tfrac{\alpha}{2} \sqrt{4-x^2})\Big) + \re\Big[ \int_0^x f(\tfrac{u}{2}) e^{i 2N \phi(\tfrac{u}{2})} \,du \Big],
    \end{split}
    \end{align}
    where
    \begin{equation*}
    f(u) := \frac{ 1 }{2 \pi (1-u^2)^\frac{1}{2}} \exp\Big( \alpha^2 u^2 - i \Theta(u) \Big).
    \end{equation*}
    We now apply \eqref{eq. asymp. of oscillatory integral} to obtain
    \begin{align*}
    \begin{split}
    \re\Big[ \int_0^x f(\tfrac{u}{2}) e^{i 2N \phi(\tfrac{u}{2})} \,du \Big] &= \im\Big[ \Big( \frac{f(\frac{x}{2})}{\phi'(\frac{x}{2})} e^{i2N\phi(\frac{x}{2})} - \frac{f(0)}{\phi'(0)} e^{i2N\phi(0)} \Big)  \Big] N^{-1} + O(N^{-2})
    \\
    &= \frac{ e^{\frac{\alpha^2}{4}x^2} }{ \pi (4-x^2) } \sin( 2N\phi(\tfrac{x}{2}) - \Theta(\tfrac{x}{2}) ) N^{-1} + O(N^{-2}).
    \end{split}
    \end{align*}
    Putting them together, we have
    \begin{equation} \label{K(0) final expression}
      K_{(0)}(x) = \frac{e^{\alpha^2} }{2 \sqrt{\pi} \alpha} \Big( \erf(\alpha )-\erf(\tfrac{\alpha}{2} \sqrt{4-x^2})\Big) + \frac{ e^{\frac{\alpha^2}{4}x^2} }{ \pi (4-x^2) } \sin( 2N\phi(\tfrac{x}{2}) - \Theta(\tfrac{x}{2}) ) N^{-1} + O(N^{-2}).
    \end{equation}
    
    We now turn into evaluation of $K_{(1)}(x)$.
    By the definition \eqref{k_(j)^(m)}, we have
    \begin{align*}
        k_{(1)}^{(-2)}(u) k_{(0)}^{(-1)}(u)
        &= \frac{C_{-2}(\tfrac{u}{2})}{2} \Big[ \cos( 2N \phi(\tfrac{u}{2}) - \Theta(\tfrac{u}{2}) ) + \frac{u}{2} \Big]  
        \\
        &\quad + \frac{D_{-2}(\tfrac{u}{2})}{2} \Big[ \sin( 2N \phi(\tfrac{u}{2}) - \Theta(\tfrac{u}{2}) ) + \Big(1-\frac{u^2}{4}\Big)^\frac{1}{2} \Big],
    \end{align*}
    and
    \begin{align*}
        k_{(0)}^{(-2)}(u) k_{(1)}^{(-1)}(u) 
        &= \frac{C_{-1}(\tfrac{u}{2})}{2} \Big[ \cos( 2N \phi(\tfrac{u}{2}) - \Theta(\tfrac{u}{2})  ) + \frac{u}{2} \Big]
        \\
        &\quad  + \frac{D_{-1}(\tfrac{u}{2})}{2} \Big[ \sin( 2N \phi(\tfrac{u}{2}) - \Theta(\tfrac{u}{2}) ) - \Big(1-\frac{u^2}{4}\Big)^\frac{1}{2} \Big].
    \end{align*}
    Then, by using \eqref{Coeff Cm Dm} and \eqref{eq. asymp. of oscillatory integral}, after some computations, we have 
    \begin{align} \label{K(1) final expression}
    \begin{split}
    K_{(1)}(x) &=\frac{6 \alpha ^4- 21 \alpha ^2 - 1}{24 \sqrt{\pi}  \alpha } e^{\alpha ^2} \Big( \erf(\alpha )-\erf(\tfrac{\alpha}{2} \sqrt{4-x^2}) \Big) 
    \\
    & \quad + \frac{ 3 \alpha^2 x^2-4 \alpha^2 + 8 }{8 \pi \sqrt{4-x^2}}e^{\frac{\alpha ^2 x^2}{4}}+ \frac{\alpha^2-2}{4 \pi} + O(N^{-1}).
    \end{split}
    \end{align}
    Substituting \eqref{K(0) final expression} and \eqref{K(1) final expression} into \eqref{RN1(x)-RN1(0)}, the conclusion follows.
\end{proof}

Next, we compute the asymptotic behaviour of $\bfR_N^2(x)$.
The proof is analogous to the previous proof, but much simpler.

\begin{lemma} \label{Lem_RN2(x) wH}
    Let $\tau = 1 - \alpha^2/N$ with fixed $\alpha \in [0,\infty)$.
    Then as $N \to \infty$, we have
    \begin{equation}
        \bfR_N^{2}(x) = \frac{e^{\frac{\alpha^2}{4}(x^2-4)}}{\pi (4-x^2)} 
        \bigg[ \frac{\sqrt{4-x^2}}{2} + \sin( 2N \phi(\tfrac{x}{2}) - \Theta(\tfrac{x}{2}) ) \bigg] + O(N^{-1}),
    \end{equation}
    for any $x \in (-2,2)$. Here, $\phi$ and $\Theta$ are given in \eqref{phi} and \eqref{Theta}.
\end{lemma}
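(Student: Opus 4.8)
The plan is to run a streamlined version of the argument used for Lemma~\ref{Lem_RN1(x) wH}. Starting from the integral representation \eqref{eq. RN2 integral representation} of $\bfR_N^2$, I would substitute $\tau = 1-\alpha^2/N$ and apply the oscillatory Plancherel--Rotach formula \eqref{eq. Plancherel oscillatory regime (tau fixed)} to the two Hermite factors in $\bfR_N^2$: to $e^{-\frac{Nx^2}{2(1+\tau)}}H_{N-1}(\sqrt{\tfrac{N}{2\tau}}x)$ with $m=-1$ (the factor sitting outside the integral) and to $e^{-\frac{Nu^2}{2(1+\tau)}}H_{N-2}(\sqrt{\tfrac{N}{2\tau}}u)$ with $m=-2$ (the integrand). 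In contrast to Lemma~\ref{Lem_RN1(x) wH}, the integrand carries only a \emph{single} Hermite polynomial, so its leading asymptotics is purely oscillatory with no stationary (polynomial-in-$u$) part; the resulting oscillatory integral is therefore $O(N^{-1})$, so $\bfR_N^2(x)$ is $O(1)$ and it suffices to retain only the leading coefficients $k_{(0)}^{(m)}$ from \eqref{k_(j)^(m)}, relegating the $k_{(1)}^{(m)}$ terms to the error.

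First I would carry out the bookkeeping of the explicit $N$-dependent constants. Collecting the prefactor $\tfrac{1}{\sqrt{2\pi}}\tfrac{(\tau/2)^{N-3/2}}{1+\tau}\tfrac{N}{(N-2)!}$ together with the factorials and the powers of $2$ and $N$ produced by \eqref{eq. Plancherel oscillatory regime (tau fixed)} for $m=-1$ and $m=-2$, and using Stirling's formula \eqref{eq. Stirling} together with $\tau^N = e^{-\alpha^2}(1+O(N^{-1}))$, one checks that all powers of $2$ cancel, the factors $e^N$ and $N^N$ cancel against Stirling, and the explicit constants collapse to $\tfrac12 e^{-\alpha^2}N\,(1+O(N^{-1}))$. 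Pulling the $u$-independent constants outside the integral leaves
\begin{equation*}
\bfR_N^2(x) = \tfrac12 e^{-\alpha^2} N\,\frac{e^{\frac{\alpha^2}{8}x^2}}{\sqrt{\pi}\,(1-\tfrac{x^2}{4})^{1/4}}\,\cos\!\big(N\phi(\tfrac{x}{2})-\widetilde{\theta}_{-1}(\tfrac{x}{2})\big)\int_0^x \frac{e^{\frac{\alpha^2}{8}u^2}}{\sqrt{\pi}\,(1-\tfrac{u^2}{4})^{1/4}}\,\cos\!\big(N\phi(\tfrac{u}{2})-\widetilde{\theta}_{-2}(\tfrac{u}{2})\big)\,du + O(N^{-1}),
\end{equation*}
with $\phi$ and $\widetilde{\theta}_m$ as in \eqref{phi} and \eqref{Tilde theta}.

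Next I would evaluate the oscillatory integral. Since $\tfrac{d}{du}\phi(\tfrac{u}{2}) = \sqrt{1-\tfrac{u^2}{4}}$ has no zero on the segment joining $0$ and $x$ for $x\in(-2,2)$, writing $\cos(N\phi(\tfrac{u}{2})-\widetilde{\theta}_{-2}(\tfrac{u}{2})) = \re\big[e^{-i\widetilde{\theta}_{-2}(u/2)}e^{iN\phi(u/2)}\big]$ and invoking Lemma~\ref{Lem_oscillatory integral}, via \eqref{eq. asymp. of oscillatory integral}, reduces the integral to its two endpoint contributions. At the lower endpoint one has $\phi(0)=-\tfrac{\pi}{2}$ and $\widetilde{\theta}_{-2}(0)=-\pi$, so that term is proportional to $\sin(N\pi/2)$, which vanishes because $N$ is even; this is the place where the parity hypothesis is used. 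The upper endpoint gives
\begin{equation*}
\int_0^x \frac{e^{\frac{\alpha^2}{8}u^2}}{\sqrt{\pi}\,(1-\tfrac{u^2}{4})^{1/4}}\,\cos\!\big(N\phi(\tfrac{u}{2})-\widetilde{\theta}_{-2}(\tfrac{u}{2})\big)\,du = \frac{1}{N}\,\frac{e^{\frac{\alpha^2}{8}x^2}}{\sqrt{\pi}\,(1-\tfrac{x^2}{4})^{3/4}}\,\sin\!\big(N\phi(\tfrac{x}{2})-\widetilde{\theta}_{-2}(\tfrac{x}{2})\big) + O(N^{-2}).
\end{equation*}

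Finally I would substitute this back, cancel the explicit factor $N$, and simplify. The product of the two trigonometric factors is handled by $\cos A\,\sin B = \tfrac12[\sin(A+B)-\sin(A-B)]$ with $A = N\phi(\tfrac{x}{2})-\widetilde{\theta}_{-1}(\tfrac{x}{2})$ and $B = N\phi(\tfrac{x}{2})-\widetilde{\theta}_{-2}(\tfrac{x}{2})$: here $A+B = 2N\phi(\tfrac{x}{2})-\Theta(\tfrac{x}{2})$ by \eqref{Theta}, while $A-B = -\big(\widetilde{\theta}_{-1}(\tfrac{x}{2})-\widetilde{\theta}_{-2}(\tfrac{x}{2})\big) = -\arccos(\tfrac{x}{2})$, so $\sin(A-B) = -\sqrt{1-\tfrac{x^2}{4}}$. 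Using $e^{-\alpha^2}e^{\alpha^2 x^2/4} = e^{\frac{\alpha^2}{4}(x^2-4)}$, $\tfrac{1}{4\pi(1-x^2/4)} = \tfrac{1}{\pi(4-x^2)}$ and $\sqrt{1-x^2/4} = \tfrac12\sqrt{4-x^2}$ then produces exactly the claimed formula. The main obstacle is the first step, namely confirming that the assorted $N$-dependent constants from the Plancherel--Rotach and Stirling asymptotics collapse precisely to $\tfrac12 e^{-\alpha^2}N$, together with the endpoint evaluations (in particular checking that the $u=0$ contribution is killed by the parity of $N$); the rest is routine trigonometric and algebraic manipulation, and tracking the error is straightforward since each term that has been dropped contributes at order $O(N^{-1})$.
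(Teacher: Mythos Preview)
Your proposal is correct and follows essentially the same route as the paper's proof: apply the oscillatory Plancherel--Rotach expansion \eqref{eq. Plancherel oscillatory regime (tau fixed)} to both Hermite factors in \eqref{eq. RN2 integral representation}, invoke Lemma~\ref{Lem_oscillatory integral} on the resulting oscillatory integral (whose phase $\phi(u/2)$ has no stationary point on $(-2,2)$), and then use the product-to-sum identity. Your write-up is in fact slightly more explicit than the paper's, since you spell out why the $u=0$ boundary term vanishes (via $\sin(N\pi/2)=0$ for even $N$), whereas the paper drops it without comment.
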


\begin{proof}
    By using \eqref{eq. RN2 integral representation} and \eqref{eq. Plancherel oscillatory regime (tau fixed)}, we have
    \begin{align*}
        \begin{split}
        \bfR_N^2(x) &= \frac{N}{\pi} \frac{e^{-\alpha^2+\tfrac{\alpha^2}{8}x^2}}{(4-x^2)^{\frac{1}{4}}} \cos( N \phi(\tfrac{x}{2}) - \widetilde{\theta}_{-1}(\tfrac{x}{2}) ) \re\Big[ \int_0^x f(\tfrac{u}{2}) e^{iN \phi(\tfrac{u}{2})}\,du\Big] \cdot \Big( 1+O(N^{-1}) \Big),
        \end{split}
    \end{align*}
    where
    \begin{equation*}
        f(u) := \frac{1}{\sqrt{2} (1-u^2)^{\frac{1}{4}}} \exp\Big( \tfrac{\alpha^2}{2} u^2 - i \, \widetilde{\theta}_{-2}(u) \Big).
    \end{equation*}
    We observe that the derivative $\frac{d}{du} \phi(\tfrac{u}{2}) = \sqrt{1-u^2/4}$ does not vanish in $(-2, 2)$. 
    Therefore it follows from \eqref{eq. asymp. of oscillatory integral} that
    \begin{align*}
        \re\Big[ \int_0^x f(u) e^{iN\phi(\tfrac{u}{2})} \,du\Big] &= \im\Big[ 2 \Big( \frac{f(x)}{\phi'(\tfrac{x}{2})} e^{iN\phi(\tfrac{x}{2})} - \frac{f(0)}{\phi'(0)} e^{iN\phi(0)} \Big) N^{-1}  + O(N^{-2}) \Big]
        \\
        &= \frac{2 e^{\frac{\alpha^2}{8} x^2}}{(4-x^2)^{\frac34}}
        \sin( N \phi(\tfrac{x}{2}) - \widetilde{\theta}_{-2}(\tfrac{x}{2}) ) N^{-1} + O(N^{-2}).
    \end{align*}
    This completes the proof.
\end{proof}

\section{Edge scaling densities} \label{sec_edge}

In this section, we prove Theorem~\ref{Thm_Edge density asym}.
According to the rescalings given in \eqref{rescaled RN sH} and \eqref{rescaled RN wH}, let us write
\begin{equation} \label{rescaled RNj's}
    R_N^j(\xi) = \frac{c}{N^r} \bfR_N^j(1+\tau + \frac{c}{N^r} \xi), \qquad (j = 1, 2),
\end{equation}
where $c = \sqrt{1-\tau^2}$ and $r = 1/2$ (\textit{resp.}, $c=1$ and $r = 2/3 $) and $\bfR_N^j$'s are given in \eqref{eq. RN1 discrete summation} and \eqref{eq. RN2 integral representation}.
For the later purpose, let 
\begin{equation} \label{rescaled RNj = SN TNj}
    R_N^j(\xi) = S_N \cdot T_N^j(\xi), \qquad (j = 1, 2),
\end{equation}
where
\begin{equation} \label{rescaled SN}
    S_N := \frac{(\tau/2)^{N-\frac{3}{2}}}{1+\tau} \frac{N}{(N-2)!} \frac{c}{N^r}
\end{equation}
and
\begin{align} \label{rescaled TN1}
    T_N^1(\xi) &:=  \sqrt{\frac{2}{\pi}} \int_\xi^\infty e^{-\frac{N}{1+\tau}u^2} H_{N-2}(\sqrt{\tfrac{N}{2\tau}}u) H_{N-1}(\sqrt{\tfrac{N}{2\tau}}u) \,du,
    \\
    \begin{split} \label{rescaled TN2 (1)}
    T_N^2(\xi) &:= \frac{1}{\sqrt{2\pi}} e^{-\frac{N}{2(1+\tau)}x^2} H_{N-1}(\sqrt{\tfrac{N}{2\tau}}x) \int_0^x e^{-\frac{N}{2(1+\tau)}u^2}H_{N-2}(\sqrt{\tfrac{N}{2\tau}}u) \,du.
    \end{split}
\end{align}

We shall use an integration formula for the Hermite polynomials which can be found in \cite[Eq.(7.376)]{GR14}: for an even number $N$ and $a>0$, we have
\begin{equation*}
    \int_0^\infty e^{ -a t^2 } H_N(t) dt = \frac{\sqrt{\pi}}{2} \frac{N!}{(N/2)!} a^{-\frac{N+1}{2}}(1-a)^{\frac{N}{2}}.
\end{equation*}
Then we have 
\begin{equation} \label{rescaled TN2 (2)}
    T_N^2(\xi) = \frac{1}{\sqrt{2\pi}} e^{-\frac{N}{2(1+\tau)}x^2} H_{N-1}(\sqrt{\tfrac{N}{2\tau}}x) \bigg( \sqrt{\frac{\pi(1+\tau)}{2N}} \frac{(N-2)!}{(N/2-1)!} \tau^{1-\frac{N}{2}} - \widetilde{T}_N^2(\xi) \bigg),
\end{equation}
where
\begin{equation} \label{Tilde TN2}
    \widetilde{T}_N^2(\xi) := \int_x^\infty e^{-\frac{N}{2(1+\tau)}u^2}H_{N-2}(\sqrt{\tfrac{N}{2\tau}}u) \,du.
\end{equation}

The outline of this section is as follows:
\begin{itemize}
    \item the Plancherel-Rotach formula for our specific purpose is formulated in Lemma~\ref{Lem_P-R exp tau fixed} (\textit{resp.}, Lemma~\ref{Lem_P-R tran tau varies});
    \item we compute the asymptotic behaviour of $R_N^1(\xi)$ by analysing $S_N$ and $T_N^1(\xi)$ in Lemma~\ref{Lem_RN1(xi) sH edge} (\textit{resp.}, Lemma~\ref{Lem_RN1(xi) wH edge});
     \smallskip 
    \item analogous to the previous step, we derive the asymptotic behaviour of $R_N^2(\xi)$ by analysing $\widetilde{T}_N^2(\xi)$ in Lemma~\ref{Lem_RN2(xi) sH edge} (\textit{resp.}, Lemma~\ref{Lem_RN2(xi) wH edge}).
\end{itemize}

\subsection{Strong non-Hermiticity}
In this subsection, we prove Theorem~\ref{Thm_Edge density asym} (i).
Throughout this subsection, we write
\begin{equation} \label{x and xi}
    x = 1 + \tau + \sqrt{\frac{1-\tau^2}{N}} \xi.
\end{equation}
We frequently use the following reformulation of Lemma~\ref{Lem_Plancherel-Rotach exp}.
\begin{lemma}
\label{Lem_P-R exp tau fixed}
    Let $\tau \in (0, 1)$ be fixed. Then as $N \to \infty$, we have
    \begin{equation} \label{eq. Plancherel exponential regime (tau fixed)}
        e^{-\frac{N}{2(1+\tau)} x^2} H_{N+m}(\sqrt{\tfrac{N}{2\tau}}x) = \frac{e^{\frac{m}{2}}}{\sqrt{1-\tau}} \Big( \frac{2N}{e\tau} \Big)^{\frac{N+m}{2}} e^{-\xi^2} \Big(1 + h_{(1)}^{\rm exp}(\xi) N^{-\frac{1}{2}}  + O(N^{-1})\Big)
    \end{equation}
    for $\xi \in \R$, where
    \begin{equation}
        h_{(1)}^{\rm exp}(\xi) := \frac{1}{3} \Big(\frac{1+\tau}{1-\tau}\Big)^\frac{3}{2}\xi^3 + \frac{(1-\tau)m-\tau}{1-\tau}\Big(\frac{1+\tau}{1-\tau}\Big)^\frac{1}{2}\xi.
    \end{equation}
\end{lemma}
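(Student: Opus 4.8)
The plan is to deduce Lemma~\ref{Lem_P-R exp tau fixed} from Lemma~\ref{Lem_Plancherel-Rotach exp} by matching the two normalisations of the Hermite argument. Writing $\sqrt{\tfrac{N}{2\tau}}\,x=\sqrt{2N}\,z$ with $z:=\tfrac{x}{2\sqrt{\tau}}$, the scaling $x=1+\tau+\sqrt{(1-\tau^2)/N}\,\xi$ becomes $z=z_0+\delta$, where
\[
z_0:=\frac{1+\tau}{2\sqrt{\tau}},\qquad \delta:=\frac{\sqrt{1-\tau^2}}{2\sqrt{\tau}}\,\xi\,N^{-1/2}.
\]
Since $z_0^2-1=(1-\tau)^2/(4\tau)>0$, for every $\xi$ in a fixed compact set and all large $N$ one has $z>1$, so Lemma~\ref{Lem_Plancherel-Rotach exp} applies, with implicit constants uniform on that set. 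I would first record the base-point data, all consequences of $\sigma'/\sigma=(z^2-1)^{-1/2}$ and $z-\sqrt{z^2-1}=\sigma^{-1}$: $\sqrt{z_0^2-1}=\tfrac{1-\tau}{2\sqrt{\tau}}$, $\sigma(z_0)=\tau^{-1/2}$, $z_0/\sigma(z_0)=\tfrac{1+\tau}{2}$, $\sigma'(z_0)/\sigma(z_0)=\tfrac{2}{1-\tau}$, and $\sigma''(z)=-(z^2-1)^{-3/2}$.

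Substituting \eqref{eq. Plancherel exponential regime} and multiplying by $e^{-\frac{N}{2(1+\tau)}x^2}$, one obtains a product of elementary $N$-dependent factors, the factor $(z^2-1)^{-1/4}$, and $e^{\mathcal{E}}(1+O(N^{-1}))$, where
\[
\mathcal{E}=-\frac{N}{2}+\Bigl(N+m+\tfrac12\Bigr)\log\sigma(z)+N\,\frac{z}{\sigma(z)}-\frac{N}{2(1+\tau)}x^2 .
\]
As $x^2$ is a polynomial in $N^{-1/2}$, its term is $-\tfrac{N(1+\tau)}{2}-\sqrt{N}\,c\,\xi-\tfrac{1-\tau}{2}\xi^2$ exactly, with $c:=\sqrt{1-\tau^2}$. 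I would then Taylor-expand $\log\sigma(z)$ and $z/\sigma(z)$ about $z_0$ to third order in $\delta$ — the cubic term is retained because it multiplies $N$ and $N\delta^3=O(N^{-1/2})$ — and expand $(z^2-1)^{-1/4}$ to first order in $\delta$; since $\delta=O(N^{-1/2})$ nothing of order $\delta^4$ affects the claimed accuracy, and the $O(N^{-1})$ error of Lemma~\ref{Lem_Plancherel-Rotach exp} is not amplified.

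Three identities make the expansion collapse. The value $z_0/\sigma(z_0)=(1+\tau)/2$ cancels the $O(N)$ content of $\mathcal{E}$ against $-N/2$ and against the $O(N)$ part of the Gaussian, leaving $-\tfrac{N}{2}(1+\log\tau)$; combined with the algebraic $2$- and $N$-powers this yields $\bigl(\tfrac{2N}{e\tau}\bigr)^{(N+m)/2}$, and together with the $z_0$-values of $\log\sigma$ and $(z^2-1)^{-1/4}$ the constant $e^{m/2}/\sqrt{1-\tau}$. The identity $(z/\sigma)'+\sigma'/\sigma=2/\sigma$, worth $2\sqrt{\tau}$ at $z_0$, forces the $O(\sqrt{N})$ coefficient of $\mathcal{E}$ to vanish. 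Differentiating it, $(z/\sigma)''+(\sigma'/\sigma)'=-2\sigma'/\sigma^2$, worth $-4\tau/(1-\tau)$ at $z_0$, is exactly what makes the $\xi$-dependent $O(1)$ part of $\mathcal{E}$ equal $-\xi^2$, producing the factor $e^{-\xi^2}$.

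Finally, $h_{(1)}^{\rm exp}(\xi)$ is the $O(N^{-1/2})$ content of $(z^2-1)^{-1/4}e^{\mathcal{E}}$: the $\xi^3$-coefficient comes from $\tfrac{N}{6}(\sigma'/\sigma)''(z_0)\delta^3+\tfrac{N}{6}(z/\sigma)'''(z_0)\delta^3$, and the $\xi$-coefficient from the linear-in-$\delta$ terms $(m+\tfrac12)\tfrac{\sigma'(z_0)}{\sigma(z_0)}\delta$ (from $(N+m+\tfrac12)\log\sigma(z)$) and $-\tfrac{z_0}{2(z_0^2-1)}\delta$ (from the prefactor); inserting $\delta=\tfrac{c}{2\sqrt{\tau}}\xi N^{-1/2}$ and using $c^2=(1-\tau)(1+\tau)$ gives the stated $\tfrac13\bigl(\tfrac{1+\tau}{1-\tau}\bigr)^{3/2}\xi^3+\tfrac{(1-\tau)m-\tau}{1-\tau}\bigl(\tfrac{1+\tau}{1-\tau}\bigr)^{1/2}\xi$. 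The only real obstacle is bookkeeping: several independent sources feed the $N^{-1/2}$ term, so one must check that the $\sqrt{N}$ cancellation is exact (a leftover would corrupt $h_{(1)}^{\rm exp}$) and that the third-order Taylor coefficients, evaluated via $\sigma''=-(z^2-1)^{-3/2}$, are correct. Uniformity on compacts of $\R$ is automatic, since all Taylor remainders and the error in Lemma~\ref{Lem_Plancherel-Rotach exp} are bounded uniformly over compact $\xi$-ranges.
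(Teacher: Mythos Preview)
Your approach is correct and is precisely what the paper does: it presents Lemma~\ref{Lem_P-R exp tau fixed} as a direct ``reformulation of Lemma~\ref{Lem_Plancherel-Rotach exp}'' without an explicit proof, and your substitution $z=x/(2\sqrt{\tau})$ followed by Taylor expansion in $\delta=O(N^{-1/2})$ is exactly that reformulation. One minor slip: $\sigma'(z_0)/\sigma(z_0)=2\sqrt{\tau}/(1-\tau)$, not $2/(1-\tau)$ (it is $\sigma'(z_0)$ itself that equals $2/(1-\tau)$); the stray $\sqrt{\tau}$ cancels against the one in $\delta$, so your final coefficients are unaffected.
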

\begin{rem} \label{Rem_Error bound for P-R exp}
    The error bound of $O(N^{-1})$ in \eqref{eq. Plancherel exponential regime (tau fixed)} was obtained in \cite[Theorem 2.1]{SNWW23}.
    Furthermore, it was shown that if $\xi$ is bounded away from below, then the error term is uniformly bounded by a polynomial of $\xi$.
    Then, integrating \eqref{eq. Plancherel exponential regime (tau fixed)} with respect to $\xi$ from $+\infty$, we observe that the error term remains $O(N^{-1})$ after the integration.
\end{rem}

We first derive the asymptotic behaviours of $R_N^1.$

\begin{lemma} \label{Lem_RN1(xi) sH edge}
    Let $\tau \in (0,1)$ be fixed. Then as $N \to \infty$, we have
    \begin{equation} \label{rescaled RN1 sH}
        R_N^1(\xi) = R_{(0)}^1(\xi) + R_{(1)}^1(\xi) N^{-\frac{1}{2}} + O(N^{-1}),
    \end{equation}
    for $\xi \in \R$, where
    \begin{align}
        R_{(0)}^1(\xi) &:= \frac{ 1 }{2 \sqrt{ 2 \pi }} \Big(1-\erf(\sqrt{2} \xi ) \Big),
        \\
        R_{(1)}^1(\xi) &:= \frac{\sqrt{1-\tau ^2}}{6 \pi (1-\tau )^2} e^{-2 \xi^2} \Big(( 1 +\tau ) \xi^2 + 2 \tau -4 \Big).
    \end{align}
\end{lemma}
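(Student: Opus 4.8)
The strategy is to use the factorisation $R_N^1(\xi) = S_N\cdot T_N^1(\xi)$ from \eqref{rescaled RNj = SN TNj}, analysing the prefactor $S_N$ by Stirling's formula and the integral $T_N^1(\xi)$ via the exponential-regime Plancherel--Rotach formula of Lemma~\ref{Lem_P-R exp tau fixed}. First I would expand $S_N = \tfrac{(\tau/2)^{N-3/2}}{1+\tau}\tfrac{N}{(N-2)!}\sqrt{\tfrac{1-\tau^2}{N}}$: applying \eqref{eq. Stirling} to $(N-2)!$ gives $S_N$ as $(\tau/2)^{N-3/2}$ times $(2N/e)^{-N}$ times an explicit algebraic factor in $N$ and $\tau$, with a $1+O(N^{-1})$ correction. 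I expect this to produce something like $S_N = \bigl(\tfrac{e\tau}{2N}\bigr)^{N} \cdot (\text{explicit})\cdot(1+c\,N^{-1}+\cdots)$, designed precisely so that the matching $(2N/e\tau)^{N+m}$ growth coming from the two Hermite factors in $T_N^1$ cancels.

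Next I would treat $T_N^1(\xi) = \sqrt{\tfrac2\pi}\int_\xi^\infty e^{-\frac{N}{1+\tau}u^2}H_{N-2}(\sqrt{\tfrac{N}{2\tau}}u)H_{N-1}(\sqrt{\tfrac{N}{2\tau}}u)\,du$. Since $x_0 = +\infty$ and the relevant $u$-range corresponds to $|u| > 2\sqrt\tau$ (Region III of Figure~\ref{Fig_PR regimes}), I apply \eqref{eq. Plancherel exponential regime (tau fixed)} to each of $H_{N-2}$ and $H_{N-1}$, writing $e^{-\frac{N}{1+\tau}u^2} = e^{-\frac{N}{2(1+\tau)}u^2}\cdot e^{-\frac{N}{2(1+\tau)}u^2}$ so that each exponential pairs with one Hermite polynomial. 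Under the change of variables $u = 1+\tau+\sqrt{\tfrac{1-\tau^2}{N}}\,\eta$ (so that the lower limit $u=x$ becomes $\eta=\xi$), each Hermite factor contributes $e^{-\eta^2}$ at leading order together with the $h_{(1)}^{\rm exp}(\eta)N^{-1/2}$ correction; the product of the two $e^{-\eta^2}$ gives $e^{-2\eta^2}$, and the Jacobian supplies another $\sqrt{\tfrac{1-\tau^2}{N}}$. Multiplying by $S_N$, all the growing/decaying powers of $N$ and $\tau$ cancel and one is left, at leading order, with a constant times $\int_\xi^\infty e^{-2\eta^2}\,d\eta = \sqrt{\tfrac\pi8}\bigl(1-\erf(\sqrt2\,\xi)\bigr)$, matching $R_{(0)}^1$. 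The $N^{-1/2}$ term comes from three sources — the $h_{(1)}^{\rm exp}$ corrections (with $m=-1$ and $m=-2$) inside the integrand, the $N^{-1}$-correction in Stirling's formula for $S_N$ (which is actually $O(N^{-1})$, hence does \emph{not} contribute at order $N^{-1/2}$), and the subleading term in expanding $\bigl(\tfrac{e\tau}{2N}\bigr)$-type factors — so effectively only the $h_{(1)}^{\rm exp}$ pieces enter; collecting $(h_{(1)}^{\rm exp,\,m=-1}(\eta)+h_{(1)}^{\rm exp,\,m=-2}(\eta))e^{-2\eta^2}$ and integrating over $[\xi,\infty)$ should reproduce $R_{(1)}^1(\xi) = \tfrac{\sqrt{1-\tau^2}}{6\pi(1-\tau)^2}e^{-2\xi^2}\bigl((1+\tau)\xi^2+2\tau-4\bigr)$ after the boundary terms from integrating $\eta^3 e^{-2\eta^2}$ and $\eta e^{-2\eta^2}$ are simplified.

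For the uniformity on compact sets and the legitimacy of exchanging the large-$N$ asymptotics with the integral over the unbounded range $[\xi,\infty)$, I would invoke Remark~\ref{Rem_Error bound for P-R exp}: the $O(N^{-1})$ error in \eqref{eq. Plancherel exponential regime (tau fixed)} is uniform in $\eta$ bounded below and controlled by a polynomial times the Gaussian decay $e^{-2\eta^2}$, so the tail contributes a genuine $O(N^{-1})$ after integration, uniformly for $\xi$ in a compact set. The main obstacle I anticipate is purely computational bookkeeping: tracking the exact constants through the composition of Stirling's formula, the two Plancherel--Rotach expansions, the Jacobian, and then evaluating the Gaussian moment integrals $\int_\xi^\infty \eta^k e^{-2\eta^2}\,d\eta$ for $k=0,1,3$ and reassembling them into the stated closed form for $R_{(1)}^1$ — a place where sign errors and factors of $2$ and $\sqrt\pi$ are easy to mismanage, but no conceptual difficulty arises.
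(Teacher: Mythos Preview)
Your proposal is correct and follows essentially the same route as the paper's proof: factor $R_N^1 = S_N\cdot T_N^1$, expand $S_N$ by Stirling's formula \eqref{eq. Stirling} to obtain \eqref{asymp of SN}, insert the exponential-regime Plancherel--Rotach formula \eqref{eq. Plancherel exponential regime (tau fixed)} for each Hermite factor under the rescaling \eqref{x and xi}, invoke Remark~\ref{Rem_Error bound for P-R exp} to justify integrating the error over $[\xi,\infty)$, and evaluate the resulting Gaussian moments via \eqref{Gaussian integral}. Your identification of the sole $N^{-1/2}$ contribution as coming from $h_{(1)}^{\rm exp}|_{m=-1}+h_{(1)}^{\rm exp}|_{m=-2}$ matches the paper exactly.
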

\begin{proof}
    By \eqref{rescaled RNj = SN TNj}, it is enough to compute the asymptotic behaviours of $S_N$ and $T_N^1(\xi)$.
    Note that by Stirling's formula \eqref{eq. Stirling}, we have
    \begin{equation} \label{asymp of SN}
        S_N = \Big( \frac{1-\tau}{2\pi(1+\tau) } \Big)^\frac{1}{2} e^{\frac{3}{2}} N^{\frac{1}{2}} \Big(\frac{e\tau}{2N}\Big)^{N-\frac{3}{2}} \Big(1 + O(N^{-1}) \Big).
    \end{equation}
    On the other hand, by \eqref{eq. Plancherel exponential regime (tau fixed)} and Remark~\ref{Rem_Error bound for P-R exp}, we have 
    \begin{align*}
    \begin{split}
        T_N^1(\xi) &= \Big( \frac{2(1+\tau)}{\pi(1-\tau)} \Big)^\frac{1}{2} e^{-\frac{3}{2}} N^{-\frac{1}{2}} \Big(\frac{2N}{e\tau}\Big)^{N-\frac{3}{2}}
        \\
        & \quad \times \int_\xi^\infty e^{-2t^2} \bigg(1 + \Big(\frac{2}{3} \Big( \frac{1+\tau}{1-\tau} \Big)^\frac{3}{2}t^3- \frac{3-\tau}{1-\tau} \Big(\frac{1+\tau}{1-\tau} \Big)^\frac{1}{2}t \Big) N^{-\frac{1}{2}} + O(N^{-1})\bigg)  \,dt.
    \end{split}
    \end{align*}
    Using the Gaussian integral
    \begin{equation} \label{Gaussian integral}
        \int_x^\infty t^3e^{-a^2t^2} dt = \frac{a^2x^2+1}{2a^4} e^{-a^2x^2}, \qquad (a >0),
    \end{equation}
    we obtain
    \begin{align*}
        T_N^1(\xi) = \Big( \frac{2 \pi (1+\tau)}{1-\tau} \Big)^\frac{1}{2} e^{-\frac{3}{2}} N^{-\frac{1}{2}} \Big(\frac{2N}{e\tau} \Big)^{N-\frac{3}{2}}
        \Big( R_{(0)}^1(\xi) + R_{(1)}^1(\xi) N^{-\frac{1}{2}} + O(N^{-1}) \Big).
    \end{align*}
    These implies the assertion of the lemma when substituted in \eqref{rescaled RNj = SN TNj}.
\end{proof}

Next, we compute the asymptotic behaviours of $R_N^2.$

\begin{lemma} \label{Lem_RN2(xi) sH edge}
    Let $\tau \in (0,1)$ be fixed. Then as $N \to \infty$, we have
    \begin{equation}
        R_N^2(\xi) = R_{(0)}^2(\xi) + R_{(1)}^2(\xi) N^{-\frac{1}{2}} + O(N^{-1}),
    \end{equation}
    for $\xi \in \R$, where
    \begin{align}
        R_{(0)}^2(\xi) &:= \frac{1}{4\sqrt{\pi} } e^{-\xi^2} \Big( 1 + \erf(\xi) \Big),
        \\
        R_{(1)}^2(\xi) &:= \frac{ \sqrt{1-\tau^2} }{ 12 \pi (1-\tau)^2 } e^{-2\xi^2} \bigg( \Big( ( 1 + \tau) \xi^3  - 3 \xi \Big) \sqrt{\pi} e^{\xi^2}  \Big(1+\erf(\xi ) \Big) - (1+\tau)\xi^2 - 4 \tau + 5 \bigg).
    \end{align}
\end{lemma}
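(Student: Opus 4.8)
The plan is to follow the same scheme as in the proof of Lemma~\ref{Lem_RN1(xi) sH edge}. By \eqref{rescaled RNj = SN TNj} we write $R_N^2(\xi)=S_N\,T_N^2(\xi)$ and use the splitting \eqref{rescaled TN2 (2)} of $T_N^2$ into three pieces: the \emph{exact} constant $A_N:=\sqrt{\tfrac{\pi(1+\tau)}{2N}}\,\tfrac{(N-2)!}{(N/2-1)!}\,\tau^{1-N/2}$ produced by the Hermite integration formula, the Hermite prefactor $\tfrac{1}{\sqrt{2\pi}}\,e^{-\frac{N}{2(1+\tau)}x^2}H_{N-1}(\sqrt{\tfrac{N}{2\tau}}\,x)$, and the tail integral $\widetilde{T}_N^2(\xi)$ of \eqref{Tilde TN2}. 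The inputs are: the asymptotics \eqref{asymp of SN} of $S_N$; Stirling's formula \eqref{eq. Stirling} applied to $A_N$, which after the simplification $\tau^{1-N/2}\big(\tfrac{2N}{e}\big)^{\frac{N-2}{2}}=\big(\tfrac{2N}{e\tau}\big)^{\frac{N-2}{2}}$ gives $A_N=\sqrt{\tfrac{1+\tau}{N}}\,e^{-1}\big(\tfrac{2N}{e\tau}\big)^{\frac{N-2}{2}}\sqrt{\pi}\,\big(1+O(N^{-1})\big)$; and Lemma~\ref{Lem_P-R exp tau fixed} with $m=-1$ for the Hermite prefactor and with $m=-2$ for the integrand of $\widetilde{T}_N^2$.

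First I would evaluate $\widetilde{T}_N^2(\xi)$: substituting $u=1+\tau+\sqrt{(1-\tau^2)/N}\,t$ as in \eqref{x and xi} and inserting the $m=-2$ form of Lemma~\ref{Lem_P-R exp tau fixed}, with the uniform integrability provided by Remark~\ref{Rem_Error bound for P-R exp}, reduces the problem to the three elementary Gaussian integrals $\int_\xi^\infty e^{-t^2}\,dt=\tfrac{\sqrt{\pi}}{2}\big(1-\erf(\xi)\big)$, $\int_\xi^\infty t\,e^{-t^2}\,dt=\tfrac12 e^{-\xi^2}$ and $\int_\xi^\infty t^3 e^{-t^2}\,dt=\tfrac{\xi^2+1}{2}e^{-\xi^2}$ (the last from \eqref{Gaussian integral}); this gives $\widetilde{T}_N^2(\xi)$ in closed form up to $O(N^{-1})$.

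Next I would form $A_N-\widetilde{T}_N^2(\xi)$. The $N$-dependent prefactors of $A_N$ and of $\widetilde{T}_N^2$ agree, so their difference contributes $\sqrt{\pi}-\tfrac{\sqrt{\pi}}{2}\big(1-\erf(\xi)\big)=\tfrac{\sqrt{\pi}}{2}\big(1+\erf(\xi)\big)$ at leading order --- this is where the factor $1+\erf(\xi)$ in $R_{(0)}^2$ comes from --- while at order $N^{-1/2}$ only $\widetilde{T}_N^2$ contributes, since Stirling produces for $A_N$ only integer powers of $N^{-1}$. Multiplying by $S_N$ and by the Hermite prefactor (the $m=-1$ form of Lemma~\ref{Lem_P-R exp tau fixed}), all powers $\big(\tfrac{2N}{e\tau}\big)^{\bullet}$ and all algebraic powers of $N$ cancel, exactly as for $R_{(0)}^1$ and $R_{(1)}^1$: the $O(1)$ term is $R_{(0)}^2(\xi)=\tfrac{1}{4\sqrt{\pi}}e^{-\xi^2}\big(1+\erf(\xi)\big)$, and collecting the two $N^{-1/2}$ contributions --- the $h_{(1)}^{\rm exp}$ correction (with $m=-1$) of the Hermite prefactor multiplying the leading term, plus the $h_{(1)}^{\rm exp}$ integral correction (with $m=-2$) of $\widetilde{T}_N^2$ --- yields the stated $R_{(1)}^2(\xi)$ after simplification via $\sqrt{1-\tau^2}/(1-\tau)^2=(1+\tau)^{1/2}/(1-\tau)^{3/2}$.

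The main obstacle is bookkeeping rather than analysis: one has to keep track of the three independent sources of (divergent) prefactors --- $S_N$, the Hermite prefactor, and $A_N$ against $\widetilde{T}_N^2$ --- so that they cancel cleanly, and then combine the two different $h_{(1)}^{\rm exp}$ polynomials (for $m=-1$ and $m=-2$) with the $\erf$ terms and the Gaussian moments without sign errors. All the analytic content is already contained in Lemma~\ref{Lem_P-R exp tau fixed} and Remark~\ref{Rem_Error bound for P-R exp}, and the computation is a close, somewhat simpler, variant of the one in Lemma~\ref{Lem_RN1(xi) sH edge}.
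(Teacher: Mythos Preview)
Your proposal is correct and follows essentially the same approach as the paper's proof: both use the splitting \eqref{rescaled TN2 (2)}, apply Stirling's formula to the constant $A_N$, insert Lemma~\ref{Lem_P-R exp tau fixed} with $m=-1$ for the Hermite prefactor and $m=-2$ for the integrand of $\widetilde{T}_N^2$, evaluate the resulting Gaussian moments via \eqref{Gaussian integral}, and then combine with \eqref{asymp of SN}. Your narrative even makes the cancellation mechanism behind the factor $1+\erf(\xi)$ more explicit than the paper does.
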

\begin{proof}
    We begin with \eqref{rescaled TN2 (2)}.
    Making use of Stirling's formula \eqref{eq. Stirling}, we have
    \begin{equation*}
        \sqrt{\frac{\pi(1+\tau)}{2N}} \frac{(N-2)!}{(N/2-1)!} \tau^{-\frac{N}{2}+1} = \frac{\tau \sqrt{\pi(1+\tau)}}{2} \Big(\frac{2N}{e\tau}\Big)^\frac{N}{2} N^{-\frac{3}{2}} \Big( 1 + O(N^{-1}) \Big).
    \end{equation*}
    By using \eqref{eq. Plancherel exponential regime (tau fixed)} and Remark~\ref{Rem_Error bound for P-R exp}, we obtain
    \begin{align*}
    \begin{split}
        \widetilde{T}_N^2(\xi) &= \frac{ \tau \sqrt{1+\tau} }{2} \Big(\frac{2N}{e\tau}\Big)^{\frac{N}{2}} N^{-\frac{3}{2}}
        \\
        &\quad \times \int_\xi^\infty e^{-t^2} \bigg( 1 + \Big(\frac{1}{3} \Big( \frac{1+\tau}{1-\tau} \Big)^\frac{3}{2}t^3 + \frac{\tau-2}{1-\tau} \Big( \frac{1+\tau}{1-\tau} \Big)^\frac{1}{2}t\Big) N^{-\frac{1}{2}} + O(N^{-1}) \bigg) \,dt.
    \end{split}
    \end{align*}
    Then, by the Gaussian integral \eqref{Gaussian integral}, we have
    \begin{align*}
    \begin{split}
        \widetilde{T}_N^2(\xi) &= \frac{\tau\sqrt{\pi(1+\tau)}}{2} \Big(\frac{2N}{e\tau}\Big)^{\frac{N}{2}} N^{-\frac{3}{2}} 
        \\
        &\quad \times \bigg( \frac{1}{2}\Big(1 - \erf(\xi)\Big) + \frac{\sqrt{1+\tau} }{6 \sqrt{\pi} (1-\tau )^{\frac{3}{2}}} \Big( (1 + \tau) \xi^2 +4 \tau -5\Big) e^{-\xi^2} N^{-\frac{1}{2}} + O(N^{-1}) \bigg).
    \end{split}
    \end{align*}
    Putting them together in \eqref{rescaled TN2 (2)}, we have
    \begin{equation}
        T_N^2(\xi) = \Big( \frac{2\pi(1+\tau)}{1-\tau} \Big)^{\frac{1}{2}} e^{-\frac{3}{2}} N^{-\frac{1}{2}} \Big(\frac{2N}{e\tau}\Big)^{N-\frac{3}{2}} e^{-\xi^2} \Big( R_{(0)}^2(\xi) + R_{(1)}^2(\xi) N^{-\frac{1}{2}} + O(N^{-1}) \Big).
    \end{equation}
    Combining this with \eqref{asymp of SN}, the lemma follows. 
\end{proof}

\subsection{Weak non-Hermiticity}
In this subsection, we prove Theorem~\ref{Thm_Edge density asym} (ii).
Throughout this subsection, let us write
\begin{equation}
    x = 1 + \tau + \frac{\xi}{N^{2/3}}.
\end{equation}

For the later purpose, we state a generalised version of Lemma~\ref{Lem_Plancherel-Rotach crit}. See e.g. \cite[Eqs.(3.3) and (3.5)]{Sk59}.
\begin{lemma} \label{Lem_P-R tran tau varies}
    Let $\tau = 1 - \alpha^2/N^{\frac{1}{3}}$ with fixed $\alpha \in [0,\infty)$. Then as $N \to \infty$, we have
    \begin{align}\label{eq. Plancherel transition regime (tau varies)}
    \begin{split}
    e^{-\frac{Nx^2}{2(1+\tau)}} H_{N+m}(\sqrt{\tfrac{N}{2\tau}}x) &= \exp\Big( \tfrac{\alpha^2}{2} N^{\frac23} + \tfrac{\alpha^4}{4} N^{\frac13} + \tfrac{\alpha^6}{6} \Big) (2N)^{\frac{m}{2}}\pi^{\frac14}2^{\frac{N}{2}+\frac14}(N!)^{\frac12}N^{-\frac{1}{12}}
    \\
    & \quad \times \Big( \Airy_\alpha(\xi) + \big( \mathcal{A}_m(\xi) \Airy_\alpha(\xi) + \mathcal{B}_m(\xi) \Airy_\alpha'(\xi) \big) N^{-\frac13} + O(N^{-\frac23 + \epsilon}) \Big),
    \end{split}
    \end{align}
    uniformly for $\xi \in [-N^\delta, N^\delta]$, where $\delta \in [0,2/3)$, $\epsilon > 2 \delta$ and
    \begin{equation}
        \mathcal{A}_m(t) := \frac{2\alpha^4t+\alpha^8+(4m+2)\alpha^2}{8},
        \qquad
        \mathcal{B}_m(t) := \frac{2\alpha^2 t + \alpha^6 -4m-2}{4}.
    \end{equation}
\end{lemma}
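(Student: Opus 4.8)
The plan is to obtain \eqref{eq. Plancherel transition regime (tau varies)} by specialising a uniform turning-point (Airy-type) asymptotic expansion of the Hermite polynomials. As input I would take Skovgaard's expansion \cite[Eqs.~(3.3) and (3.5)]{Sk59}: there is an explicit analytic map $z \mapsto \zeta_n(z)$, normalised so that $\zeta_n(\sqrt{2n+1})=0$ and $\zeta_n'>0$, and an algebraic prefactor $P_n(z)$ reducing to $\pi^{1/4}2^{n/2+1/4}(n!)^{1/2}n^{-1/12}$ at the turning point, such that $e^{-z^2/2}H_n(z) = P_n(z)\big(\Airy(\zeta_n(z)) + O(n^{-2/3})\big)$ uniformly, with the errors growing at most polynomially in $\zeta$, over the range of $z$ corresponding to $\xi\in[-N^\delta,N^\delta]$. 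The whole lemma is then a matter of inserting the data $n=N+m$, $z=\sqrt{N/(2\tau)}\,x$, $\tau=1-\alpha^2 N^{-1/3}$ and $x=1+\tau+\xi N^{-2/3}$ and expanding everything in descending powers of $N^{1/3}$.

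First I would treat the exponential factors. Since $e^{-z^2/2}H_n(z)$ appears above while the left-hand side of \eqref{eq. Plancherel transition regime (tau varies)} carries $e^{-Nx^2/(2(1+\tau))}$, the ratio of the two weights, with $z=\sqrt{N/(2\tau)}\,x$, is the single Gaussian $\exp\!\big(Nx^2\,\tfrac{1-\tau}{4\tau(1+\tau)}\big)$. Substituting $1-\tau=\alpha^2N^{-1/3}$ and $x=2-\alpha^2N^{-1/3}+\xi N^{-2/3}$ and expanding, its exponent equals $\tfrac{\alpha^2}{2}N^{2/3}+\tfrac{\alpha^4}{4}N^{1/3}+\tfrac{\alpha^6}{4}+\tfrac{\alpha^2}{2}\xi+O(N^{-1/3})$, which splits as $\big(\tfrac{\alpha^2}{2}N^{2/3}+\tfrac{\alpha^4}{4}N^{1/3}+\tfrac{\alpha^6}{6}\big)+\big(\tfrac{\alpha^6}{12}+\tfrac{\alpha^2}{2}\xi\big)+O(N^{-1/3})$; exponentiating, the first bracket gives the prefactor $\exp(\cdots)$ in the statement, the second is exactly the exponential sitting inside $\Airy_\alpha$, and the $O(N^{-1/3})$ remainder of the exponent (affine in $\xi$) contributes an $\Airy_\alpha$-proportional term to the order-$N^{-1/3}$ correction. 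Next I would expand the Airy variable: writing $z=\sqrt{2n}\,u$ gives $u=\tfrac{x}{2\sqrt\tau}\sqrt{N/(N+m)}$, hence $u-1=(\tfrac\xi2+\tfrac{\alpha^4}{8})N^{-2/3}+O(N^{-1})$; and since $\zeta_n(\sqrt{2n}\,u)=2n^{2/3}(u-1)-\tfrac12 n^{-1/3}+O(n^{2/3}(u-1)^2)$ --- the $-\tfrac12 n^{-1/3}$ coming from the gap between the base points $\sqrt{2n}$ and $\sqrt{2n+1}$ and the quadratic term being harmless in our regime --- this yields $\zeta=\xi+\tfrac{\alpha^4}{4}+\mathcal B_m(\xi)N^{-1/3}+O(N^{-2/3+\epsilon})$ with $\mathcal B_m$ the degree-one polynomial of the statement (the part $-(m+\tfrac12)$ of $\mathcal B_m$ coming from the two factors $\sqrt{N/(N+m)}$ and the base-point shift, the $\alpha$-dependent part from the expansion of $\tfrac{x}{2\sqrt\tau}$). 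Taylor-expanding $\Airy(\zeta)$ about $\xi+\tfrac{\alpha^4}{4}$ and using $\Airy_\alpha(\xi)=e^{\alpha^6/12+\alpha^2\xi/2}\Airy(\xi+\tfrac{\alpha^4}{4})$ together with $\Airy_\alpha'(\xi)=\tfrac{\alpha^2}{2}\Airy_\alpha(\xi)+e^{\alpha^6/12+\alpha^2\xi/2}\Airy'(\xi+\tfrac{\alpha^4}{4})$ converts the $\Airy'$-term into $\mathcal B_m(\xi)\Airy_\alpha'(\xi)$ plus a further $\Airy_\alpha$-contribution $-\tfrac{\alpha^2}{2}\mathcal B_m(\xi)$.

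It then remains to expand the algebraic prefactor $P_{N+m}(z)$ using Stirling's formula \eqref{eq. Stirling}, which produces the stated $\pi^{1/4}2^{N/2+1/4}(N!)^{1/2}N^{-1/12}(2N)^{m/2}$ plus one more $\Airy_\alpha$-proportional term at order $N^{-1/3}$; adding up the three $\Airy_\alpha$-contributions --- from the remainder of the exponent, from $-\tfrac{\alpha^2}{2}\mathcal B_m$, and from the prefactor --- gives $\mathcal A_m(\xi)$. A consistency check: setting $\alpha=0$ (so $\tau=1$, $x=1+\tfrac12 N^{-2/3}\xi$) returns $\mathcal A_m\equiv0$ and $\mathcal B_m\equiv-(m+\tfrac12)$, i.e.\ Lemma~\ref{Lem_Plancherel-Rotach crit}. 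The real difficulty is maintaining uniformity for $\xi\in[-N^\delta,N^\delta]$: there $\zeta$ runs over an unbounded set, so one must use that Skovgaard's errors are uniform with only polynomial growth in $\zeta$ and that $\Airy$ and $\Airy'$ have a common envelope, and one must control the terms one discards --- the quadratic-in-$(u-1)$ curvature of $\zeta_n$ (of size $O(N^{-2/3+2\delta})$) and the second-order term in the Taylor expansion of $\Airy(\zeta)$ --- which are precisely what turns the natural remainder $O(N^{-2/3})$ into $O(N^{-2/3+\epsilon})$ with $\epsilon>2\delta$. Alternatively one may import the uniform two-term turning-point expansion directly from \cite{SNWW23}, the route already used for the exponential regime in Lemma~\ref{Lem_P-R exp tau fixed}.
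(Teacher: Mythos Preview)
The paper does not actually prove this lemma; it merely states it as a generalisation of Lemma~\ref{Lem_Plancherel-Rotach crit} and points to Skovgaard \cite[Eqs.~(3.3) and (3.5)]{Sk59} for the underlying uniform Airy-type expansion. Your proposal follows exactly this route---specialising Skovgaard's turning-point expansion to the data $n=N+m$, $z=\sqrt{N/(2\tau)}\,x$, $\tau=1-\alpha^2 N^{-1/3}$, $x=1+\tau+\xi N^{-2/3}$---and supplies the bookkeeping (the Gaussian weight ratio, the expansion of the Airy argument, the Stirling correction to the prefactor, and the uniformity discussion for $\xi\in[-N^\delta,N^\delta]$) that the paper leaves implicit, so the approach is the same.
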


As before, we compute the asymptotic behaviours of $R_N^1.$

\begin{lemma} \label{Lem_RN1(xi) wH edge}
    Let $\tau = 1 - \alpha^2 / N^\frac{1}{3}$. Then as $N \to \infty$, we have
    \begin{equation}
        R_N^1(\xi)= R_{(0)}^1(\xi) + R_{(1)}^1(\xi) N^{-\frac{1}{3}} + O(N^{-\frac{2}{3}+\epsilon}),
    \end{equation}
    for any $\epsilon>0$, where
    \begin{align} 
        R_{(0)}^1(\xi) &= \int_\xi^\infty \Airy_\alpha(t)^2\,dt,
        \\
        R_{(1)}^2(\xi) &= -\frac{2\alpha^2\xi+\alpha^6+4}{4} \Airy_\alpha(\xi)^2 + \int_\xi^\infty \frac{\alpha^4t+ \alpha^2}{2} \Airy_\alpha(t)^2 \,dt.
    \end{align}
\end{lemma}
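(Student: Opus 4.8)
The plan is to follow the same architecture as the strongly non-Hermitian edge case (Lemma~\ref{Lem_RN1(xi) sH edge}), but now feeding the critical-regime Plancherel--Rotach expansion of Lemma~\ref{Lem_P-R tran tau varies} into the factorisation $R_N^1(\xi) = S_N \cdot T_N^1(\xi)$ from \eqref{rescaled RNj = SN TNj}--\eqref{rescaled TN1}. First I would record the large-$N$ behaviour of the prefactor $S_N$ in \eqref{rescaled SN}: with $c = 1$, $r = 2/3$, and $\tau = 1-\alpha^2/N^{1/3}$, Stirling's formula \eqref{eq. Stirling} together with the expansion of $(\tau/2)^{N-3/2}$ (note $\log\tau = -\alpha^2 N^{-1/3} - \tfrac{\alpha^4}{2}N^{-2/3} - \tfrac{\alpha^6}{3}N^{-1} + O(N^{-4/3})$, which is exactly where the $\exp(\tfrac{\alpha^2}{2}N^{2/3} + \tfrac{\alpha^4}{4}N^{1/3} + \tfrac{\alpha^6}{6})$ factors in \eqref{eq. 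Plancherel transition regime (tau varies)} will cancel against $S_N$) yields $S_N$ up to the needed $O(N^{-1/3})$ relative accuracy.

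Next I would analyse $T_N^1(\xi) = \sqrt{2/\pi}\int_\xi^\infty e^{-\frac{N}{1+\tau}u^2}H_{N-2}(\sqrt{N/(2\tau)}u)H_{N-1}(\sqrt{N/(2\tau)}u)\,du$. Changing variables $u = 1+\tau + tN^{-2/3}$, the integrand becomes (using \eqref{eq. Plancherel transition regime (tau varies)} with $m=-1$ and $m=-2$) a product of two expansions of the schematic form $\Airy_\alpha(t) + (\mathcal{A}_m(t)\Airy_\alpha(t) + \mathcal{B}_m(t)\Airy_\alpha'(t))N^{-1/3} + O(N^{-2/3+\epsilon})$, multiplied by the explicit Gaussian/exponential prefactors from the two applications of the lemma and by the Jacobian $N^{-2/3}$. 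The exponential prefactors $e^{(m/2)}(2N)^{m/2}$ and the two $\exp(\cdots)$ blocks combine with $S_N$ to give a pure constant, leaving
\begin{align*}
T_N^1(\xi) &= (\text{explicit prefactor}) \cdot \int_\xi^\infty \Big( \Airy_\alpha(t)^2 + \big( (\mathcal{A}_{-1}(t)+\mathcal{A}_{-2}(t))\Airy_\alpha(t)^2 \\
&\qquad\qquad + (\mathcal{B}_{-1}(t)+\mathcal{B}_{-2}(t))\Airy_\alpha(t)\Airy_\alpha'(t)\big)N^{-1/3} + O(N^{-2/3+\epsilon}) \Big)\,dt.
\end{align*}
One then computes $\mathcal{A}_{-1}+\mathcal{A}_{-2} = \tfrac{\alpha^4 t}{2} + \tfrac{\alpha^8}{4} - \tfrac{3\alpha^2}{2}$ and $\mathcal{B}_{-1}+\mathcal{B}_{-2} = \alpha^2 t + \tfrac{\alpha^6}{2} + 1$, and uses the identity $\int_\xi^\infty \Airy_\alpha(t)\Airy_\alpha'(t)\,dt = -\tfrac12\Airy_\alpha(\xi)^2$ (valid since $\Airy_\alpha$ decays at $+\infty$) plus an integration by parts on the $\alpha^2 t\,\Airy_\alpha\Airy_\alpha'$ term to reduce everything to $\Airy_\alpha(\xi)^2$ and $\int_\xi^\infty(\alpha^4 t + \alpha^2)\Airy_\alpha(t)^2\,dt$, matching the claimed $R_{(0)}^1$ and $R_{(1)}^1$ (modulo absorbing the remaining polynomial-in-$\alpha$ constants correctly).

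I expect the main obstacle to be bookkeeping the interplay between the large prefactors: one must verify that the superexponential factors $\exp(\tfrac{\alpha^2}{2}N^{2/3} + \tfrac{\alpha^4}{4}N^{1/3} + \tfrac{\alpha^6}{6})$ appearing twice in the two Hermite expansions, together with $(\tau/2)^{N-3/2}$ in $S_N$, the Gaussian weight $e^{-Nu^2/(1+\tau)}$ in the integrand, and the $(N!)^{1/2}$, $N^{-1/12}$, $2^{N/2+1/4}$ factors, all conspire to leave a finite constant with no residual $N$-dependence beyond the stated powers. A secondary technical point is justifying that the $O(N^{-2/3+\epsilon})$ error in \eqref{eq. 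Plancherel transition regime (tau varies)} survives integration over $t \in [\xi,\infty)$: this needs the uniformity statement of Lemma~\ref{Lem_P-R tran tau varies} on $[-N^\delta, N^\delta]$ together with the rapid (super-Gaussian) decay of $\Airy_\alpha$ and an elementary tail bound outside that window, analogous to Remark~\ref{Rem_Error bound for P-R exp} in the strongly non-Hermitian case; one picks $\delta>0$ small so that $\epsilon > 2\delta$ can be taken arbitrarily small.
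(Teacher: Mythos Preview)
Your approach is correct and essentially identical to the paper's: the paper makes the split at $r_N = 1+\tau+N^{\delta-2/3}$ explicit, writing $T_N^1 = T_N^{1,1}+T_N^{1,2}$ and bounding the tail $T_N^{1,2}$ via the exponential-regime estimate \cite[Theorem~2.1]{SNWW23} together with an analysis of the phase function $\omega(x)=\log\sigma(x)+x/\sigma(x)-\tfrac{2\tau}{1+\tau}x^2$, which is exactly the ``elementary tail bound outside that window'' you anticipate. Two arithmetic slips to correct before matching the final formula: $\mathcal{A}_{-1}+\mathcal{A}_{-2}=\tfrac{\alpha^4t}{2}+\tfrac{\alpha^8}{4}-\alpha^2$ (not $-\tfrac{3\alpha^2}{2}$) and $\mathcal{B}_{-1}+\mathcal{B}_{-2}=\alpha^2t+\tfrac{\alpha^6}{2}+2$ (not $+1$).
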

\begin{proof}
    Note that as $N \to \infty,$
    \begin{equation} \label{asymp tau^N wH}
        \tau^N
        = \Big( 1-\frac{\alpha^8}{4}N^{-\frac{1}{3}}+O(N^{-\frac{2}{3}}) \Big) \exp\Big( -\alpha^2N^\frac{2}{3}-\tfrac{\alpha^4}{2}N^\frac{1}{3}-\tfrac{\alpha^6}{3} \Big).
    \end{equation}
    Then it follows from \eqref{rescaled SN} and \eqref{eq. Stirling} that
    \begin{equation} \label{asymp of SN wH}
        S_N = \frac{1}{\sqrt{\pi}} \Big(\frac{e}{2N}\Big)^N N^{\frac{11}{6}} \Big( 1+(2\alpha^2-\tfrac{\alpha^8}{4})N^{-\frac{1}{3}}+O(N^{-\frac{2}{3}}) \Big) \exp\Big( -\alpha^2N^\frac{2}{3}-\tfrac{\alpha^4}{2}N^\frac{1}{3}-\tfrac{\alpha^6}{3} \Big) .
    \end{equation}
    
    Next, we analyse the asymptotic behaviour of $T_N^1(\xi)$.
    Let us write
    \begin{equation} \label{r_N}
        r_N := 1 + \tau + \frac{N^\delta}{N^{2/3}} ,
    \end{equation}
    for a sufficiently small $\delta>0$.
    Then we divide
    \begin{equation*}
        T_N^1(\xi) = T_N^{1,1}(\xi) + T_N^{1,2}(\xi),
    \end{equation*}
    where
    \begin{align*}
        T_N^{1,1}(\xi) &:= \sqrt{\frac{2}{\pi}} \int_x^{r_N} e^{-\frac{N}{1+\tau}u^2} H_{N-2}(\sqrt{\tfrac{N}{2\tau}}u) H_{N-1}(\sqrt{\tfrac{N}{2\tau}}u) \, du,
        \\
        T_N^{1,2}(\xi) &:= \sqrt{\frac{2}{\pi}} \int_{r_N}^\infty e^{-\frac{N}{1+\tau}u^2} H_{N-2}(\sqrt{\tfrac{N}{2\tau}}u) H_{N-1}(\sqrt{\tfrac{N}{2\tau}}u) \, du.
    \end{align*}
    
    Using \eqref{eq. Plancherel transition regime (tau varies)} and the Airy differential equation $\Airy''(x) = x \Airy(x)$, we deduce 
    \begin{align*} 
    \begin{split}
        T_N^{1,1}(\xi) &= \exp\Big( \alpha^2 N^{\frac{2}{3}} + \tfrac{\alpha^4}{2} N^{\frac{1}{3}} + \tfrac{\alpha^6}{3} \Big) (2N)^{-\frac{3}{2}} 2^{N+1} (N!) N^{-5/6} \\
        & \quad \times \bigg( \int_\xi^{N^\delta} \Big( \Airy_\alpha(t)^2 
        + \big( a_1(t) \Airy_\alpha(t)^2 + a_2(t) \Airy_\alpha(t)\Airy_\alpha'(t) \big) N^{-\frac{1}{3}} \Big) \,dt + O(N^{-\frac{2}{3} + 3 \delta} ) \bigg),
    \end{split}
    \end{align*}
    where
    \begin{equation*}
        a_1(t) := \frac{2\alpha^4t+\alpha^8-4\alpha^2}{4},
        \qquad
        a_2(t) := \frac{2\alpha^2t+\alpha^6+4}{2}.
    \end{equation*}
    By the asymptotic formula of the Airy function \cite[Eq.(9.7.5)]{NIST},
    \begin{equation*}
        \Airy(x) \sim \frac{1}{2\sqrt{\pi} x^{1/4}} e^{-\frac{2}{3} x^{3/2}}, \qquad (x \to +\infty),
    \end{equation*}
    we have
    \begin{equation*}
        \int_{N^\delta}^\infty \Airy_\alpha^2(t) \, dt = O(e^{-N^{\delta}}). 
    \end{equation*}
    Using this together with \eqref{eq. Stirling}, after some computations, we obtain
    \begin{align}
    \begin{split} \label{TN1 Airy wH}
        T_N^{1,1}(\xi) &= \sqrt{\pi} \Big( \frac{2N}{e} \Big)^N N^{-\frac{11}{6}} \exp\Big( \alpha^2N^\frac{2}{3} + \tfrac{\alpha^4}{2}N^\frac{1}{3} + \tfrac{\alpha^6}{3} \Big)
        \\
        &\quad \times \Big( R_{(0)}^1(\xi) + \Big(R_{(1)}^1(\xi) - (2\alpha^2-\tfrac{\alpha^8}{4}) R_{(0)}^1(\xi) \Big) N^{-\frac{1}{3}} + O(N^{-\frac{2}{3}+4\delta}) \Big).
    \end{split}
    \end{align}
    
    By \cite[Theorem 2.1]{SNWW23}, we have that for $x>1$,
    \begin{equation*}
        H_{N+m}(\sqrt{2N}x) \leq \frac{1}{(x^2-1)^{\frac{1}{4}}} e^{-\frac{N}{2}} N^{\frac{N+m-1}{2}} 2^{\frac{N+m}{2}} \sigma(x)^{N+m+\frac{3}{2}} e^{N x / \sigma(x)}. 
    \end{equation*}
    This in turn implies that for $x> r_N$, 
    \begin{equation} \label{TN 1,1 integrand}
        e^{-\frac{N}{1+\tau}x^2} H_{N-2}(\sqrt{\tfrac{N}{2\tau}}x) H_{N-1}(\sqrt{\tfrac{N}{2\tau}}x)
        \leq
        \frac{1}{(\frac{x^2}{4\tau}-1)^{\frac{1}{2}}} e^{-N} N^{N-\frac{5}{2}} 2^{N-\frac{3}{2}} \exp\Big( 2N \omega( \tfrac{x}{2\sqrt{\tau}} ) \Big),
    \end{equation}
    where
    \begin{equation*}
        \omega(x) := \log(\sigma(x)) + \frac{x}{\sigma(x)} - \frac{2\tau}{1+\tau} x^2.
    \end{equation*}
     Notice here that for a sufficiently large $N$ and $x\in[1+\tau,\infty)$, we have
    \begin{equation} \label{omega'(x)}
        \omega'(x) = - 2 \sigma(x) + \frac{2(1-\tau)}{1+\tau} x = -2 \sigma(x) + \frac{2\alpha^2}{N^{1/3}} \frac{1}{1+\tau} x < -x. 
    \end{equation}
    Furthermore, we have
    \begin{equation} \label{omega(r_N)}
        \omega(r_N) = \frac{\alpha^2}{2} N^{-\frac{1}{3}} + \frac{\alpha^4}{4} N^{-\frac{2}{3}} - N^{- 1 + \frac{3}{2}\delta} + O(N^{-1+\frac{1}{2} \delta}).
    \end{equation}
    On the other hand, by \eqref{omega'(x)} and \eqref{omega(r_N)}, we deduce
    \begin{equation} \label{omega(x) bound}
        \exp\Big( 2N\omega( \tfrac{x}{2\sqrt{\tau} }) \Big) \leq \exp\Big( \alpha^2 N^{\frac{2}{3}} + \tfrac{\alpha^4}{2} N^{\frac{1}{3}} + \tfrac{\alpha^6}{3} \Big) \exp\Big( -\tfrac{1}{2}N^{\frac{3}{2}\delta} - N x^2 \Big).
    \end{equation}
    Combining \eqref{TN 1,1 integrand} and \eqref{omega(x) bound}, we obtain 
    \begin{equation}
        T_N^{1,2}(\xi) = T_N^{1,1}(\xi) \cdot O(e^{-N^\delta}).
    \end{equation}
    Now the lemma follows from \eqref{rescaled RNj = SN TNj}, \eqref{asymp of SN wH} and \eqref{TN1 Airy wH}.
\end{proof}

It remains to show the following lemma. 

\begin{lemma} \label{Lem_RN2(xi) wH edge}
    Let $\tau = 1 - \alpha^2 / N^\frac{1}{3}$. Then as $N \to \infty$, we have
    \begin{equation}
        R_N^2(x)= R_{(0)}^2(x) + R_{(1)}^2(x) N^{-\frac{1}{3}} + O(N^{-\frac{2}{3}+\epsilon}),
    \end{equation}
    for any $\epsilon>0$, where
    \begin{align}
        R_{(0)}^2(x) &= \frac{1}{2} \Airy_\alpha(\xi) \Big( 1 - \int_\xi^\infty \Airy_\alpha(t)\,dt \Big),
        \\
        \begin{split}
        R_{(1)}^2(x) &= \Big( \frac{\alpha^4\xi+2\alpha^2}{8} \Airy_\alpha(\xi) + \frac{2\alpha^2\xi+\alpha^6+2}{8}\Airy_\alpha'(\xi) \Big) \Big( 1 - \int_\xi^\infty \Airy_\alpha(t) \,dt \Big)
        \\
        & \quad + \Airy_\alpha(\xi) \Big( \frac{2\alpha^2\xi+\alpha^6+6}{8} \Airy_\alpha(\xi) - \frac{\alpha^4}{8} \int_\xi^\infty t \Airy_\alpha(t) \,dt \Big).     
        \end{split}
    \end{align}
\end{lemma}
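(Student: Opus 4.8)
The plan is to follow the same scheme as the proof of Lemma~\ref{Lem_RN1(xi) wH edge}, using the decomposition \eqref{rescaled RNj = SN TNj} together with the formula \eqref{rescaled TN2 (2)}. Thus $R_N^2(\xi)=S_N\cdot T_N^2(\xi)$, where $T_N^2(\xi)$ is the product of the Hermite factor $\frac{1}{\sqrt{2\pi}}e^{-\frac{N}{2(1+\tau)}x^2}H_{N-1}(\sqrt{\tfrac{N}{2\tau}}x)$ with the bracketed difference of the closed-form term $\sqrt{\tfrac{\pi(1+\tau)}{2N}}\frac{(N-2)!}{(N/2-1)!}\tau^{1-\frac{N}{2}}$ and $\widetilde{T}_N^2(\xi)$ from \eqref{Tilde TN2}. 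I would analyse these four ingredients separately and then multiply them out.

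First I would import the asymptotics of $S_N$ already established in \eqref{asymp of SN wH}. Next I would expand the closed-form term using Stirling's formula \eqref{eq. Stirling} and the expansion \eqref{asymp tau^N wH} (taken to half the power), keeping terms up to relative order $N^{-1/3}$; this yields a prefactor of the shape $\exp\!\big(\tfrac{\alpha^2}{2}N^{2/3}+\tfrac{\alpha^4}{4}N^{1/3}+\tfrac{\alpha^6}{6}\big)$ times a power of $N$ times $\big(1+\text{const}\cdot N^{-1/3}+\cdots\big)$. Then I would apply the critical-regime Plancherel--Rotach formula, Lemma~\ref{Lem_P-R tran tau varies}, with $m=-1$, to expand $e^{-\frac{N}{2(1+\tau)}x^2}H_{N-1}(\sqrt{\tfrac{N}{2\tau}}x)$; this contributes the factor $\Airy_\alpha(\xi)+\big(\mathcal{A}_{-1}(\xi)\Airy_\alpha(\xi)+\mathcal{B}_{-1}(\xi)\Airy_\alpha'(\xi)\big)N^{-1/3}+O(N^{-2/3+\epsilon})$ with $\mathcal{A}_{-1},\mathcal{B}_{-1}$ as in Lemma~\ref{Lem_P-R tran tau varies}.

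The remaining and most substantial ingredient is $\widetilde{T}_N^2(\xi)$. Following the proof of Lemma~\ref{Lem_RN1(xi) wH edge}, I would split the integral at $r_N=1+\tau+N^{\delta}/N^{2/3}$ for small $\delta>0$. On $[x,r_N]$ I would substitute $u=1+\tau+t/N^{2/3}$ and apply Lemma~\ref{Lem_P-R tran tau varies} with $m=-2$, integrating the resulting expansion term by term; here one uses the decay bounds $\int_{N^\delta}^\infty \Airy_\alpha(t)^2\,dt=O(e^{-N^\delta})$ and the analogous one for $\Airy_\alpha$, coming from the asymptotics of the Airy function, to replace the upper limit $N^\delta$ by $\infty$ with negligible error, and the identity $\Airy_\alpha''=\alpha^2\Airy_\alpha'+x\Airy_\alpha$ (a direct consequence of $\Airy_\alpha(x)=e^{\alpha^6/12+\alpha^2 x/2}\Airy(x+\alpha^4/4)$ together with $\Airy''=x\Airy$) to express the integrals of $t\Airy_\alpha(t)$ and of $\Airy_\alpha(t)\Airy_\alpha'(t)$ through boundary values. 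On $[r_N,\infty)$ I would invoke the exponential Plancherel--Rotach bound of \cite{SNWW23}, exactly as in \eqref{TN 1,1 integrand}--\eqref{omega(x) bound}, to see that this piece is smaller than the main one by a factor $O(e^{-N^\delta})$.

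Finally I would assemble the product $R_N^2(\xi)=S_N\cdot\frac{1}{\sqrt{2\pi}}(\text{Hermite factor})\cdot(\text{closed-form term}-\widetilde{T}_N^2(\xi))$. All the large exponential prefactors $\exp\!\big(\pm\alpha^2N^{2/3}\pm\tfrac{\alpha^4}{2}N^{1/3}\pm\tfrac{\alpha^6}{3}\big)$ and the powers of $N$ should cancel, leaving $R_{(0)}^2(\xi)+R_{(1)}^2(\xi)N^{-1/3}+O(N^{-2/3+\epsilon})$. I expect the main obstacle to be the order-$N^{-1/3}$ bookkeeping: at that order there are simultaneous contributions from the correction in $S_N$, from $\mathcal{A}_{-1},\mathcal{B}_{-1}$ in the Hermite expansion, from the subleading term of the closed-form piece, and from the subleading term of $\widetilde{T}_N^2$, and these must be combined and simplified---using the $\Airy_\alpha$ differential equation to reduce the integrals and to rewrite cross-terms between $\Airy_\alpha$ and $\Airy_\alpha'$---in order to match the stated $R_{(1)}^2$. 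The tail estimates and the leading-order identification are comparatively routine given Lemma~\ref{Lem_P-R tran tau varies} and the machinery already set up for Lemma~\ref{Lem_RN1(xi) wH edge}.
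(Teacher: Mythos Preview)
Your proposal is correct and follows essentially the same route as the paper's proof: it decomposes $R_N^2=S_N\cdot T_N^2$ via \eqref{rescaled TN2 (2)}, expands the closed-form factor by Stirling and \eqref{asymp tau^N wH}, applies Lemma~\ref{Lem_P-R tran tau varies} with $m=-1$ to the Hermite factor and with $m=-2$ inside $\widetilde{T}_N^2$, splits the latter at $r_N$, and bounds the tail exactly as in \eqref{TN 1,1 integrand}--\eqref{omega(x) bound}. The only substantive work, as you anticipate, is the $N^{-1/3}$ bookkeeping when combining the four subleading contributions; the paper carries this out explicitly but without additional ideas beyond those you list.
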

\begin{proof}
    It follows from \eqref{eq. Stirling} and \eqref{asymp tau^N wH} that
    \begin{align*}
        \sqrt{\frac{\pi(1+\tau)}{2N}} \frac{(N-2)!}{(N/2-1)!} \tau^{1-\tfrac{N}{2}} &=
        \sqrt{\frac{\pi}{2}} N^{-\frac{3}{2}} \Big( \frac{2N}{e} \Big)^\frac{N}{2} \exp\Big( \tfrac{\alpha^2}{2}N^\frac{2}{3} + \tfrac{\alpha^4}{4}N^\frac{1}{3}+\tfrac{\alpha^6}{6} \Big) 
        \\
        &\quad \times \Big( 1 + \frac{\alpha^8-10\alpha^2}{8}N^{-\frac{1}{3}} + O(N^{-\frac{2}{3}}) \Big).
    \end{align*}
    We decompose $\widetilde{T}_N^2$ as 
    \begin{equation*}
        \widetilde{T}_N^2(\xi) = \widetilde{T}_N^{2,1}(\xi) + \widetilde{T}_N^{2,2}(\xi),
    \end{equation*}
    where
    \begin{align*}
        \widetilde{T}_N^{2,1}(\xi) &:= \int_x^{r_N} e^{-\frac{N}{2(1+\tau)}u^2}H_{N-2}(\sqrt{\tfrac{N}{2\tau}}u) \,du,
        \\
        \widetilde{T}_N^{2,1}(\xi) &:= \int_{r_N}^\infty e^{-\frac{N}{2(1+\tau)}u^2}H_{N-2}(\sqrt{\tfrac{N}{2\tau}}u) \,du.
    \end{align*}
    Here, $r_N$ is given in \eqref{r_N}.
    Then a similar analysis, akin to the one presented in the proof of Lemma~\ref{Lem_RN1(xi) wH edge}, establishes that for a sufficiently small $\delta>0$, 
    \begin{align*}
        \begin{split}
        \widetilde{T}_N^{2,1}(\xi) &= \sqrt{\frac{\pi}{2}} N^{-\frac{3}{2}} \Big( \frac{2N}{e} \Big)^{\frac{N}{2}} \exp\Big( \tfrac{\alpha^2}{2} N^{\frac23} + \tfrac{\alpha^4}{4} N^{\frac13} + \tfrac{\alpha^6}{6} \Big) \\
        & \quad \times \bigg( \int_\xi^\infty \Big( \Airy_\alpha(t) 
        + \Big( \tfrac{2\alpha^4t + \alpha^8 - 6\alpha^2}{8} \Airy_\alpha(t) + \tfrac{2\alpha^2 t+\alpha^6+6}{4}\Airy_\alpha'(t) \Big) N^{-\frac13} \Big) \, dt + O(N^{-\frac23 + 2 \delta}) \bigg),
        \end{split}
        \\
        \widetilde{T}_N^{2,2}(\xi) &= \widetilde{T}_N^{2,1}(\xi) \cdot O(e^{-N^\delta}).
    \end{align*}
    Using \eqref{eq. Plancherel transition regime (tau varies)}, it follows from \eqref{rescaled TN2 (2)} that
    \begin{align*}
    \begin{split}
        T_N^2(\xi) &=
        \exp\Big( \tfrac{\alpha^2}{2} N^{\frac23} + \tfrac{\alpha^4}{4} N^{\frac13} + \tfrac{\alpha^6}{6} \Big) \pi^{\frac12} \Big( \frac{2N}{e} \Big)^{\frac{N}{2}} N^{-\frac{1}{3}} \\
        & \quad \times \Big( \Airy_\alpha(\xi) 
        + \Big( \frac{2\alpha^4 \xi + \alpha^8 - 2\alpha^2}{8}\Airy_\alpha(\xi) + \frac{2\alpha^2 \xi + \alpha^6 + 2}{4}\Airy_\alpha'(\xi) \Big) N^{-\frac13} + O(N^{-\frac23}) \Big)
        \\
        & \quad \times \Big( \sqrt{\frac{\pi(1+\tau)}{2N}} \frac{(N-2)!}{(N/2-1)!} \tau^{1-\tfrac{N}{2}} - \widetilde{T}_N^2(\xi) \Big).
    \end{split}
    \end{align*}
    Combining this with \eqref{rescaled RNj = SN TNj} and \eqref{asymp of SN wH}, we obtain the desired asymptotic behaviour.
\end{proof}

\end{document}